\documentclass[11pt,a4paper]{article}
\usepackage[english]{babel}
\usepackage[latin1]{inputenc}
\usepackage{fancyhdr}
\usepackage{amscd}
\usepackage{hyperref}
\usepackage{graphicx}
\usepackage{newlfont}
\usepackage{amssymb}
\usepackage{amsmath}
\usepackage{latexsym}
\usepackage{mathtools}
\usepackage{amsthm}
\usepackage{dsfont}
\usepackage{yfonts}
\usepackage{xcolor}
\usepackage{cancel} 
\usepackage{mathrsfs}
\newcommand{\R}{\mathbb{R}}
\newcommand{\C}{\mathbb{C}}
\newcommand{\Z}{\mathbb{Z}}
\newcommand{\N}{\mathbb{N}}
\newcommand{\Id}{\mathds{1}}

\newcommand{\cH}{{\cal H}}

\newtheorem{lemma}{Lemma}[section]
\newtheorem{theorem}[lemma]{Theorem}
\newtheorem{proposition}[lemma]{Proposition}
\newtheorem{notation}[lemma]{Notation}

\definecolor{darkgr}{rgb}{0.0, 0.62, 0.42}

\newcommand{\dbar}{d\hspace*{-0.08em}\bar{}\hspace*{0.1em}}
\newcommand{\omgen}{\omega}
\newcommand{\egen}{\mathrm{e}}
\numberwithin{equation}{section}

\theoremstyle{definition}

\newtheorem{definition}[lemma]{Definition}

\setcounter{tocdepth}{4}

\oddsidemargin  0cm  
\evensidemargin 0cm  
\topmargin  0.05cm      
\headheight 0.37cm      
\headsep    0.38cm      
\textwidth  16.5cm      
\textheight 23cm     
\footskip   1.4cm      

\title{\textbf{Massive Cantor families of periodic solutions of resonant Klein-Gordon equation on $\mathbb{S}^3$}}

\begin{document}

 \author{Diego Silimbani\footnote{
International School for Advanced Studies (SISSA), Via Bonomea 265, 34136, Trieste, Italy. 
 \textit{Email: }  \texttt{dsilimba@sissa.it}
 }}

\date{}

\maketitle

\begin{abstract}
We prove existence and multiplicity of Cantor families of small amplitude analytic in time periodic solutions of the completely resonant cubic nonlinear Klein-Gordon equation on $\mathbb{S}^3$ for an asymptotically full measure set of frequencies close to 1. The solutions are constructed by a Lyapunov-Schmidt decomposition and a Nash-Moser iterative scheme. We first find non-degenerate solutions of the Kernel equation. Then we solve the Range equation with a Nash-Moser iterative scheme to overcome small divisors problems.
\end{abstract}
MSC2020: 37K58, 58E07, 35L05, 58J45, 83C10.
\tableofcontents
\section{Introduction}
Motivated by the study of stability of anti-de Sitter space-time (AdS), the purpose of this paper is to prove existence and multiplicity of positive measure Cantor families of small amplitude time-periodic solutions of the nonlinear Klein Gordon equation on $\mathbb{S}^3$
\begin{equation}\label{mainequation}
-\partial_{tt}\phi +\Delta_{\mathbb{S}^3}\phi-\phi=\phi^3,
\end{equation}
where $\phi:\R\times\mathbb{S}^3\rightarrow \mathbb{R}$, and $\Delta_{\mathbb{S}^3}$ is the Laplace-Beltrami operator on the 3-dimensional sphere $\mathbb{S}^3$.\\
At linear level, all the solutions of $(-\partial_{tt}+\Delta_{\mathbb{S}^3}-\mathds{1})\phi=0$ are $2\pi$-time periodic, i.e. have frequency $\omega = 1 $,  
 since the eigenvalues of $-\Delta_{\mathbb{S}^3}+\mathds{1}$ are the squares of the natural numbers.
A classical approach to look for periodic solutions of \eqref{mainequation} 
with  frequency $\omega \sim 1 $, 
bifurcating from the kernel $\operatorname{ker}(-\partial_{tt}+\Delta_{\mathbb{S}^3}-\mathds{1})$, consists in using a Lyapunov-Schmidt decomposition. We call \eqref{mainequation} a completely resonant PDE since $\operatorname{ker}(-\partial_{tt}+\Delta_{\mathbb{S}^3}-\mathds{1})$ is infinite dimensional.
The reversibility of \eqref{mainequation} implies that we can look for solutions which are even in time.

Equation \eqref{mainequation} has been suggested in \cite{Bizon-Rostwo, Mali-Rostwo,Bizon-Craps-altri} as a toy model of spherically symmetric Einstein-scalar field equations close to the AdS space-time which is the maximally symmetric solution to the vacuum Einstein equations with universally negative cosmological constant $\text{Ric}(g)=-\Lambda g$.\\
AdS stability/instability properties are not yet understood nowadays and it seems they depend on conformal boundary conditions. For example, AdS is expected to be stable under dissipative boundary conditions, see \cite{Holzegel-Luk-Smulevici-Warnick}, while it has been conjectured that it is unstable under fully reflective boundary conditions by Dafermos and Holzegel in \cite{Dafermos-Holzegel} and Anderson \cite{Anderson2006}. Numerical simulations in \cite{Bizon-Rostwo} for spherically symmetric Einstein massless scalar field equations, seem to support AdS instability conjecture, against the formation of black holes for small perturbations. Such a phenomenon is rigorously proven for Einstein-massless-Vlasov systems with spherical symmetry by Moschidis in \cite{Moschi1, Moschi2} where it is shown the existence of a one-parameter family of inital data arbitrarily close to AdS whose time evolution generates black-hole region.
In \cite{Bizon-Rostwo} it is also suggested that some small initial data could lead to stable solutions of Einstein-massless scalar field equation, and in \cite{Mali-Rostwo} Maliborski and Rostworowski construct such solutions by formal power series which are also supported by numerical simulations. In the works \cite{Dias-Horowitz-Santos, Dias-Horowitz-Marolf-Santos}, the conjecture of existence of time periodic solutions (geons) has been extended to the vacuum Einstein equations.


\smallskip

Small amplitude time periodic spherically symmetric solutions of equation \eqref{mainequation} have been constructed by formal power series expansions in \cite{ch0} by Chatzikaleas.
The absence of "secular terms" in the power series expansions is obtained using the method of Maliborski and Rostworowski \cite{Mali-Rostwo}, developed for the Einstein-Klein-Gordon equation. However, the presence of \emph{small divisors} prevents the convergence of such power series. This difficulty looks analogous to the convergence problem of ``Linstedt series"  of quasi-periodic solutions in Celestial Mechanics, devised since Poincar\'{e} \cite{Poincare}, and successfully overcome during the last century by the celebrated KAM theory.

The currently  rigorous existence results  of small amplitude time-periodic solutions of \eqref{mainequation} are \cite{cha.smu} and \cite{berti2023time}. These works construct 
periodic solutions, either spherically symmetric  
or Hopf-plane waves, 
 whenever the time frequency $\omega$  belongs to the set of strongly Diophantine numbers
\begin{equation}\label{omegone}
\Omega_{\gamma}:=\left\lbrace \omega \in \left[ \frac{1}{2},2\right]\, :\, |\omega\ell-\omega_j|\geq \frac{\gamma}{\langle\ell\rangle},\,\, \forall \ell, j \in \N,\,\, \ell\neq \omega_j \right\rbrace, \quad \omega_j=j+1,\quad \gamma\in \left]0,\frac{1}{6}\right[,
\end{equation}
where $\langle \ell \rangle:=\max \lbrace 1, |\ell|\rbrace$ and $ \N := \{0,1,2, \dots \} $. The values $\omega_j=j+1$ in \eqref{omegone} are the square root of the eigenvalues of the operator $-\Delta_{\mathbb{S}^3}+\mathds{1}$ appearing in equation \eqref{mainequation}.
The set $\Omega_\gamma$ is uncountable and  accumulates to $\omega=1$, but it has $0$ measure, as shown in \cite{Bambusi-Paleari2001}. 

Looking for time-periodic solutions of \eqref{mainequation} with frequency $\omega\in \Omega_\gamma$ avoids small divisors phenomena since the inverse of the Klein-Gordon operator 
$ -\omega^2\partial_{tt}+\Delta_{\mathbb{S}^3}-\mathds{1} $ is bounded when restricted to the Range of $-\partial_{tt}+\Delta_{\mathbb{S}^3}-\mathds{1}$. Indeed it acts as a 
diagonal operator 
with eigenvalues  $\omega^2\ell^2-\omega_j^2$, and the condition $\omega\in\Omega_\gamma$ implies $|\omega^2\ell^2-\omega_j^2|\geq \gamma$ for any $ \ell \neq \omega_j  $.
In this way 
the Range equation can be solved by a simple contraction argument.

The goal of this paper is to prove 
existence and multiplicity of small amplitude time-periodic solutions of \eqref{mainequation} for a much larger set of frequencies, with actually positive measure, in particular it has {\it asymptotically full} measure at 1.
We refer to Theorem \ref{teoremone} for a precise statement. 

Enlarging the set of frequencies for which time-periodic solutions exist is 
relevant from both a physical and mathematical point of view. From a physical point of view, since this model can be thought as a first effective equation to understand stability near AdS space, the fact that time-periodic solutions are provided for a large set of frequencies suggests that stability regions are actually an observable phenomenon and not only an anomalous event, which could be thought when the result is given for only a 0-measure set of frequencies like $\Omega_\gamma$. 
From a mathematical point of view  Theorem \ref{teoremone} is one of the few existence results of 
periodic solutions of completely resonant Hamiltonian PDEs in high space dimension with small divisors phenomena.

\smallskip

In order to prove existence of solutions for a set of frequencies  with
asymptotically full measure at $ \omega = 1 $,  
we consider  Diophantine-type conditions 
 of type 
\begin{equation}\label{Meln0}
 |\omega 	\ell-\omega_j|\geq \frac{\gamma}{\langle \ell \rangle^\tau} \, , \quad
\forall \ell \neq \omega_j \, , \quad \text{for \ some} \ \tau > 1 \, , 
\end{equation}
which are weaker than \eqref{omegone} and give rise to small divisors. This implies  
that the inverse of the Klein-Gordon operator 
$ -\omega^2\partial_{tt}+\Delta_{\mathbb{S}^3}-\mathds{1} $ is {\it unbounded}, actually   
it loses $\tau-1$ derivatives since $|\omega^2\ell^2-\omega_j^2|\geq  \gamma\langle\ell\rangle^{1-\tau}$. 
Thus, a standard contraction arguments to solve the Range equation like in \cite{cha.smu, berti2023time} fails, and a more refined Nash-Moser type iteration is needed. 
Actually for the convergence of the iterative scheme further non-resonance Melnikov 
conditions are required: for a solution of amplitude $\varepsilon $, they  
have  the form 
\begin{equation}\label{Meln}
 \left|\omega(\varepsilon)\ell-\omega_j-\varepsilon\frac{{m(\varepsilon)}}{\omega_j}\right|\geq \frac{\gamma}{\langle \ell \rangle^{\tau}}
 \, , \quad \forall \ell \neq \omega_j\,, \,  \ell\geq \frac{1}{3\varepsilon}\, ,  \quad \omega (\varepsilon) = \sqrt{1+\varepsilon} \, , 
\end{equation}
where $ \varepsilon \mapsto m (\varepsilon) $  is a suitable $ C^1 $ function, 
 see \eqref{cantor} for the precise expression.
 We underline that the need to impose non-resonance conditions as  \eqref{Meln} 
 is the ultimate reason why the solutions \eqref{eq:soluzioni2.0} are 
 not analytic in $ \varepsilon  $. For finite dimensional systems 
 the lack of analyticity in $\varepsilon$ of lower dimensional tori 
 has been rigorously proved for instance in \cite{GallGent} (the periodic orbits we find are 1-dimensional invariant tori in the infinite dimensional phase space). \\

We postpone after the statement of Theorem \ref{teoremone} further 
comments and comparisons with 
\cite{cha.smu,berti2023time} as well as related literature. 
We now introduce the functional setting to state rigorously Theorem \ref{teoremone}. 

\subsection{Main Result}

We look for solutions which are spherically symmetric in space  according to the next definition.
  \begin{definition}[Spherically symmetric functions]\label{sfe}
  Consider on $\mathbb{S}^3$ the standard spherical coordinates
$$
 	\scalebox{0.9}{$(0, \pi) \times (0, \pi) \times (0, 2\pi) \ni (x, \theta, \varphi) \mapsto  (\cos(x), \sin(x) \cos(\theta), \sin(x) \sin(\theta) \cos(\varphi), \sin(x) \sin(\theta) \sin(\varphi))\,.$} $$
 	We say that $\phi: \mathbb{S}^3 \rightarrow \R$ is \emph{spherically symmetric} if 
 \begin{equation}\label{ident}
 \phi(x,\theta, \varphi) = u(x) \otimes 1_{\theta, \varphi}\,, \quad \forall ( x, \theta, \varphi) \in (0, \pi) \times (0, \pi) \times (0, 2\pi)\,, \quad u : (0, \pi) \rightarrow \C\,,
 \end{equation}
 	where $1_{\theta, \varphi}$ is the function identically equal to $1$ for any $(\theta, \varphi)$.
We say that $\phi: \R \times \mathbb{S}^3 \rightarrow \R$ is \emph{spherically symmetric} if $\phi(t, \cdot)$ is spherically symmetric for any $t \in \R$, and we  identify $\phi(t,\cdot)$ with the function $u(t,x)$ according to \eqref{ident}.
 \end{definition}
The above identification between $ u $ and $ \phi $ corresponds to  the unitary isomorphism between Hilbert spaces:
\begin{equation}\label{equispazi}
\cH^0_x:=L^2([0,\pi],\sin^2x\dbar x)\simeq \lbrace \phi \in L^2(\mathbb{S}^3,\dbar \mu)\,:\, \phi\, \text{ is spherically symmetric} \rbrace
\end{equation}
where  $\dbar x=\frac{2}{\pi} dx$ is normalized so that the constant function $u\equiv 1$ has $L^2-$norm equal to $ 1 $, and 
$ \dbar \mu $ is the normalized measure of $ \mathbb{S}^3 $.

The Laplace-Beltrami operator 
leaves invariant the subspace of spherically symmetric functions
$L^2([0,\pi],\sin^2x\dbar x)$, 
and, when restricted to this subspace, it possesses the orthonormal basis of eigenfunctions 
	\begin{equation}\label{def.ej}
	\egen_j(x) := \frac{\sin((j+1)x)}{\sin(x)} \quad \forall j \in \N \, , 
	\end{equation} 
	with eigenvalues $
	\omega_j^{2} $ 
	where $ \omega_j := j+1$ for any $ j \in \N .$
	
As a consequence,  
the subspace
  of real valued spherically symmetric functions  of the  Sobolev space  $H^r(\mathbb{S}^3, \dbar \mu)$ 
 can be 
identified with the Hilbert space
$$\mathcal{H}^r_x:=\left\lbrace u \in \cH^0_x\, :\, (-\Delta_{\mathbb{S}^{3}}+\mathds{1})^{\frac{r}{2}}u \in \cH^0_x \right\rbrace=\left\lbrace u(x)=\sum\limits_{j\in \N}u_j \egen_j(x)\, :\, 
\| u \|_{\mathcal{H}^r_x}^2 := \sum\limits_{j\in \N} u_j^2\omega_j^{2r} <\infty \right\rbrace$$
with inner product
$$
\langle u_1,u_2 \rangle_{\mathcal{H}^r_x}:=\langle (-\Delta_{\mathbb{S}^{3}}^{ss}+\mathds{1})^{r}u_1,u_2\rangle_{\cH^0_x}=\sum\limits_{j\in \N} u_{1,j}u_{2,j}\omega_j^{2r}.
$$
We look for spherically symmetric  time-periodic solutions of \eqref{mainequation} in the following spaces.
\begin{definition}\label{anal.sp}
Let $\sigma \geq 0,\, s,r\in \R$, we introduce the 
 Hilbert spaces of functions 
$$
X_{\sigma, s, r}:=\left\lbrace u(t,x)=\sum\limits_{\ell \in \Z} \exp(i\ell t)u_\ell(x)\, \bigg|\, u_{-\ell}=u_{\ell}\in \cH^r_x,\,  
\| u \|_{\sigma,s,r}^2 := 
\sum\limits_{\ell\in \Z}\exp(2\sigma|\ell|)\langle \ell \rangle^{2s}\|u_\ell\|^2_{\cH^r_x} <\infty \right\rbrace,
$$
endowed with the scalar product $\langle u, v \rangle_{\sigma,s,r}:=\sum\limits_{\ell\in \Z}\exp(2\sigma|\ell|)\langle \ell \rangle^{2s}\langle u_\ell, v_\ell \rangle_{\cH^r_x}$. In the following we use the notation
$$
X_{\sigma,s}:=X_{\sigma,s,2},\,\quad \|\cdot\|_{\sigma,s}:=\|\cdot \|_{X_{\sigma,s,2}}.
$$
In view of the condition $u_{-\ell}=u_{\ell}$, these functions are actually real-valued and even in time and admit the representation $u(t,x)= 2 \sum\limits_{\ell \in \N} \cos(\ell t) u_{\ell}(x)$.
\end{definition}
For $\sigma>0,\, s\geq 0$ these spaces consist of all  even in time periodic functions, taking values in the Sobolev space $\cH^r_x$ which admit a time-analytic extension in the complex strip $|\textrm{Im}(t)|<\sigma$ with trace function in the lines $|\textrm{Im}(t)|=\sigma$ which belongs to $H_t^s(\mathbb{T}, \cH^r_x)$. For  any $ \sigma \geq 0,\, s>\frac{1}{2},\, r> \frac{3}{2}$ these spaces are algebras with respect to the product of functions, in particular
\begin{equation}\label{algebra}
\|u_1 \cdot u_2\|_{\sigma,s,r}\leq C(s,r)\|u_1 \|_{\sigma,s,r}\|u_2\|_{\sigma,s,r}.
\end{equation}


\begin{theorem}\label{teoremone}
Let $s>\frac{1}{2}$, $\bar{\sigma}>0$ and $m\in \N.$ There exist 
$  \varepsilon_0:=\varepsilon_0(\bar{\sigma}, s, m)>0$ small enough, a Cantor-like set $B_\infty:=B_\infty(s,\bar{\sigma},m)\subseteq [0,\varepsilon_0]$ with asymptoticallly full measure at 0, namely
\begin{equation}\label{fullmeas}
\lim\limits_{\eta\to 0^+} \frac{|B_\infty\cap [0,\eta] |}{\eta}=1 \, ,
\end{equation}
and $m+1$ curves $u_\varepsilon^{(0)},\dots u_\varepsilon^{(m)}: [0,\varepsilon] \rightarrow X_{\bar{\sigma},s}$ of class $C^1$, 
of the form
\begin{equation}\label{eq:soluzioni2.0}
\begin{aligned}
& u_\varepsilon^{(j)}(t,x)=\varepsilon^{\frac{1}{2}}\sqrt{\frac{4\omega_j}{3}}\cos\left(t\omega_j 
\right)\egen_j(x)+r_\varepsilon^{(j)}\left(t,x\right) \, , \quad 
\left\|r_\varepsilon^{(j)}\right\|_{\frac{\bar{\sigma}}{2},s}\lesssim_{\sigma,s,m} \varepsilon^{\frac{3}{2}} \, , 
\end{aligned}
\end{equation}
for any $ j=0,\dots,m $,  such that 
for any $\varepsilon \in B_\infty$
$$ 
\tilde{u}_\varepsilon^{(j)}(t,x):=u_{\varepsilon}^{(j)}(\sqrt{1+\varepsilon} \,  t,x) \, , \quad
j=0,\dots,m \, , 
$$ 
are $\frac{2\pi}{\sqrt{1+\varepsilon}}-$periodic, 
analytic in time solutions of the Klein-Gordon equation \eqref{mainequation} on $\mathbb{S}^3$.
\end{theorem}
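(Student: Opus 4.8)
\smallskip
\noindent The plan is a Lyapunov--Schmidt reduction followed by a Nash--Moser iteration carried out in the analytic scale $X_{\sigma,s}$. First I would seek the solution in the form $\tilde u^{(j)}_\varepsilon(t,x)=\varepsilon^{1/2}u(\sqrt{1+\varepsilon}\,t,x)$ with $u$ of unit size; setting $\omega:=\sqrt{1+\varepsilon}$, equation \eqref{mainequation} becomes
\[
L_\omega u:=-\omega^2\partial_{tt}u+\Delta_{\mathbb{S}^3}u-u=\varepsilon\,u^3
\]
for $u$ even in time, $2\pi$-periodic, spherically symmetric and analytic in time, i.e. $u\in X_{\sigma,s}$. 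Let $V:=\overline{\operatorname{span}}\{\cos(\omega_j t)\egen_j(x):j\in\N\}$ be the even-in-time, spherically symmetric part of $\ker(-\partial_{tt}+\Delta_{\mathbb{S}^3}-1)$, let $P$ be the $L^2$-orthogonal projection onto $V$ and $Q:=\mathrm{Id}-P$ onto its complement $W$, and write $u=v+w$ with $v=Pu$, $w=Qu$. Since $L_\omega$ has eigenvalue $\omega_j^2(\omega^2-1)=\varepsilon\,\omega_j^2$ on $\cos(\omega_j t)\egen_j$, and $\omega^2\ell^2-\omega_j^2$ on $\cos(\ell t)\egen_j$ with $\ell\neq\omega_j$, the equation splits into the \emph{range equation} $L_\omega w=\varepsilon\,Q\big((v+w)^3\big)$ on $W$ and the \emph{kernel equation} $\mathcal D v=P\big((v+w)^3\big)$ on $V$, where $\mathcal D:\cos(\omega_j t)\egen_j\mapsto\omega_j^2\cos(\omega_j t)\egen_j$ (the factor $\varepsilon$ cancels in the latter).

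\emph{The kernel equation.} At $\varepsilon=0$ one has $w=0$ and the kernel equation reads $\mathcal D v=P(v^3)$. I would verify that for each $j=0,\dots,m$ the function $v_*^{(j)}:=\sqrt{4\omega_j/3}\,\cos(\omega_j t)\egen_j(x)$ solves it: writing $\cos^3(\omega_j t)=\tfrac34\cos(\omega_j t)+\tfrac14\cos(3\omega_j t)$, the only modes of $(v_*^{(j)})^3$ surviving $P$ are $\cos(\omega_j t)\egen_j$ (coefficient $\propto\langle\egen_j^3,\egen_j\rangle_{\cH^0_x}$) and $\cos(3\omega_j t)\egen_{3j+2}$ (coefficient $\propto\langle\egen_j^3,\egen_{3j+2}\rangle_{\cH^0_x}$), and the elementary identities $\langle\egen_j^3,\egen_j\rangle_{\cH^0_x}=\omega_j$ and $\langle\egen_j^3,\egen_{3j+2}\rangle_{\cH^0_x}=0$ — the ``absence of secular terms'' — yield $\mathcal D v_*^{(j)}=P\big((v_*^{(j)})^3\big)$ with exactly the amplitude in \eqref{eq:soluzioni2.0}. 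The linearization at $v_*^{(j)}$, namely $h\mapsto\mathcal D h-3P\big((v_*^{(j)})^2h\big)$, is self-adjoint with compact resolvent ($\mathcal D$ has eigenvalues $\omega_j^2\to\infty$, while $3P((v_*^{(j)})^2\,\cdot\,)$ is bounded), so its invertibility reduces to checking that $0$ is not among its finitely many small eigenvalues — the non-degeneracy of the kernel solution, to be settled by an explicit low-mode computation. (No continuous symmetry produces a kernel here: the time-translation direction $\partial_t v_*^{(j)}$ is odd in time, hence not in $X_{\sigma,s}$.)

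\emph{Nash--Moser for the range equation.} For $\varepsilon$ small and belonging to a Cantor-like set I would solve the range equation by a quadratically convergent iteration in $\{X_{\sigma,s}\}$ with $\sigma$ decreasing to $\bar\sigma/2$, updating $v$ at each step through the invertibility of the linearized kernel operator so that both equations hold along the iteration. The crux is inverting the linearized range operator $\mathcal L:=L_\omega-3\varepsilon\,Q\big((v+w)^2\,Q\,\cdot\,\big)$ on $W$. For $\ell<\tfrac1{3\varepsilon}$ one has $|\omega\ell-\omega_j|\ge\tfrac56$ for all $\ell\neq\omega_j$ (since $\omega=\sqrt{1+\varepsilon}$), so the corresponding block is $O(1)$-invertible. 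For $\ell\ge\tfrac1{3\varepsilon}$ one writes $3(v+w)^2=\bar a(x)+\tilde a(t,x)$ with $\bar a$ the time-mean: absorbing $\bar a$ into a perturbed $x$-operator yields eigenvalues $\omega_j^2+2\varepsilon\,m(\varepsilon)+O(\varepsilon^2)$, the shift $m(\varepsilon)$ being independent of the running index $j$ because the time-mean of the square of the bifurcation profile contributes the same to all modes of sufficiently high order, so that the small divisors become $\approx(\omega\ell-\omega_j-\varepsilon m(\varepsilon)/\omega_j)(\omega\ell+\omega_j)$ and are bounded from below precisely by the Melnikov conditions \eqref{Meln} (and their $C^1$-in-$\varepsilon$ variant). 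In this perturbed basis the singular part of $\mathcal L$ is diagonal (at most one singular site per Fourier row, at $O(1)$ lattice-distance from the regular part), the remaining coupling ($\tilde a$ plus the $w$-corrections) is $O(\varepsilon)$ and analytic, and a Schur-complement/Neumann argument inverts $\mathcal L$ at the cost of the factor $\langle\ell\rangle^{\tau-1}\gamma^{-1}$ coming from the singular part alone — that is, a loss of an arbitrarily small analyticity width $\delta$. With such tame estimates $\mathcal L^{-1}:X_{\sigma,s}\to X_{\sigma-\delta,s}$ and $\sum_n\delta_n\le\bar\sigma/2$, the iteration produces $C^1$-in-$\varepsilon$ approximations $(v_n(\varepsilon),w_n(\varepsilon))$ on a decreasing family of sets $G_n$, converging in $X_{\bar\sigma/2,s}$ to an exact solution $u=v(\varepsilon)+w(\varepsilon)$ with $v(\varepsilon)=v_*^{(j)}+O(\varepsilon)$ and $\|w(\varepsilon)\|_{\bar\sigma/2,s}\lesssim\varepsilon$ for $\varepsilon\in B_\infty:=\bigcap_n G_n$; undoing the rescaling gives \eqref{eq:soluzioni2.0}, the $m+1$ curves being distinct since supported on different $\egen_j$ at leading order, and $C^1$ cut-offs make them $C^1$ on all of $[0,\varepsilon_0]$.

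\emph{Measure estimate and main obstacle.} Finally, for \eqref{fullmeas} I would bound $|[0,\eta]\setminus B_\infty|$ by the sum, over $\ell\ge\tfrac1{3\varepsilon}$ and over the $O(\ell\eta)$ indices $j$ that can resonate with a given $\ell$ inside $[0,\eta]$, of the measures of the resonant sets $\{\varepsilon:|\omega(\varepsilon)\ell-\omega_j-\varepsilon m(\varepsilon)/\omega_j|<\gamma\langle\ell\rangle^{-\tau}\}$; since $\big|\partial_\varepsilon\big(\omega(\varepsilon)\ell-\omega_j-\varepsilon m(\varepsilon)/\omega_j\big)\big|\ge\tfrac{\ell}{2\sqrt{1+\varepsilon}}-O(1/\ell)\gtrsim\ell$, each such set has measure $\lesssim\gamma\langle\ell\rangle^{-\tau-1}$, whence $|[0,\eta]\setminus B_\infty|\lesssim\gamma\,\eta\sum_{\ell\ge 1/(3\eta)}\langle\ell\rangle^{-\tau}\lesssim\gamma\,\eta^{\tau}=o(\eta)$ since $\tau>1$, which is \eqref{fullmeas}. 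I expect the main obstacle to be the inversion step above: deriving tame estimates for $\mathcal L^{-1}$ with quantitative control of the loss of analyticity while simultaneously tracking the $C^1$ dependence on $\varepsilon$ (indispensable for both the measure estimate and the final $C^1$ curves), together with the feedback whereby the Melnikov conditions \eqref{Meln} depend, through $m(\varepsilon)$, on the very solution being constructed; by contrast, the completely resonant kernel equation is benign once the two combinatorial identities and the low-mode non-degeneracy are in hand.
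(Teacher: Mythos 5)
Your high-level strategy — Lyapunov--Schmidt with the rescaling $\omega=\sqrt{1+\varepsilon}$, the one-mode kernel solutions $\sqrt{4\omega_j/3}\cos(\omega_j t)\egen_j$ made non-degenerate by the two combinatorial identities, a Nash--Moser iteration for the range equation under first-order Melnikov conditions, and the $O(\gamma\eta^\tau)$-type measure estimate — is exactly the paper's route, and your identification of the inversion of the linearized range operator as the crux is correct. The kernel-equation and measure-estimate parts are essentially right as stated.

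However, the inversion step is precisely where your sketch has a gap, and the heuristic you offer for why a Neumann/Schur argument closes it is not the mechanism that actually makes it work. You claim the singular part consists of ``at most one singular site per Fourier row, at $O(1)$ lattice-distance from the regular part,'' and that the coupling is $O(\varepsilon)$, so the inverse costs only $\langle\ell\rangle^{\tau-1}\gamma^{-1}$ ``from the singular part alone.'' But after symmetrizing as $|D|^{-1/2}\mathcal{M}|D|^{-1/2}$, the relevant quantity is $\bigl(\alpha_\ell\alpha_k\bigr)^{-1/2}|b_{\ell-k}|$, and two distinct rows $\ell\neq k$ can \emph{both} carry divisors of size $\sim\gamma/\langle\ell\rangle^{\tau-1}$; there is no $O(1)$ separation of singular sites, and an $O(\varepsilon)$ coupling conjugated by $|D|^{-1/2}$ is not automatically small. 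What saves the argument in the paper is the small-divisor-product bound (Lemma \ref{smalldiv}): for $\tau\in{]1,2[}$, using that the Melnikov conditions are only active for $\ell,k\ge\tfrac1{3\varepsilon}$, one shows $(\alpha_\ell\alpha_k)^{-1}\lesssim |\ell-k|^{2(\tau-1)/\beta}\,\gamma^{-2}\varepsilon^{-(\tau-1)}$ with $\beta=\tfrac{2-\tau}{\tau}$, so that $\varepsilon\,\|\mathcal{R}_1\|\lesssim\gamma^{-1}\varepsilon^{(3-\tau)/2}$ is indeed small. The polynomial growth in $|\ell-k|$ is in turn absorbed only because the matrix entries $\langle b_0\,\egen_j,\egen_k\rangle_{\cH^0_x}$ decay off-diagonal; this is not obvious in the spherical basis and requires the Sobolev embedding $\cH^{r+1+\delta}_x\hookrightarrow H^r(\mathbb{S}^1,dx)$ of Lemma \ref{sob} to reduce to the flat-circle situation. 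Without these two ingredients — and without the hypothesis $\tau<2$, which you never impose and which is essential to Lemma \ref{smalldiv} — the Neumann series you invoke is not under control, so this part of your proposal would not close as written. Relatedly, the loss the paper actually proves for $\mathfrak{L}_n^{-1}$ on $W^{(n)}$ is the factor $L_n^{\tau-1}\gamma^{-1}$ (i.e.\ $\tau-1$ Sobolev derivatives in time, not an ``arbitrarily small analyticity width''); the transfer to analyticity loss happens only inside the Nash--Moser scheme via the smoothing estimates, and the kernel-solution radius must be kept uniform over the finite range $[s,s+\tfrac{2\tau(\tau-1)}{2-\tau}]$ for the high-norm bound on $w_n$ (Lemma \ref{particolare}), a bookkeeping point your sketch elides.
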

We now make some simple comments on Theorem \ref{teoremone}.
\begin{enumerate}
\item{The set of frequencies $\omega(\varepsilon)=\sqrt{1+\varepsilon}$ for which we find $m$-distinct $\frac{2\pi}{\omega(\varepsilon)}-$periodic solutions also has full-asymptotic measure at $\omega=1$ for any $m \in \N$, see Remark \ref{rem315}.}
\item{By triangular inequality if $m_1\neq m_2$, then $u_\varepsilon^{(m_1)}$ and $u_\varepsilon^{(m_2)}$ are distinct functions, and so $\tilde{u}_\varepsilon^{(m_1)}$ and $\tilde{u}_\varepsilon^{(m_2)}$ are different $\frac{2\pi}{\omega(\varepsilon)}$-periodic solutions of \eqref{mainequation} for $\varepsilon$ small enough.}
\item{The functions $u_\varepsilon^{(j)}$ in \eqref{eq:soluzioni2.0} (and so the solutions $\tilde{u}^{(j)}_\varepsilon$ of \eqref{mainequation} we construct) are actually $C^\infty$ also in the variable $x$, as follows by the bootstrap argument of Lemma \ref{classical}.}
\item{By performing the change of variable $U(t,x)=u(t,x)\sin(x)$, equation \eqref{mainequation} is equivalent to the completely resonant nonlinear wave equation
\begin{equation}\label{toro}
\begin{cases}\partial_{tt}U-\partial_{xx}U=\frac{U^3}{\sin^2 x},\quad x\in ]0,\pi[,\\
U(t,0)=U(t,\pi)=0.
\end{cases}
\end{equation}
For this reason Theorem \ref{teoremone} can also be regarded as a result of existence and multiplicity of time periodic solutions for a large set of frequencies for the completely resonant 1d wave equation with Dirichlet boundary conditions and singular nonlinearity $\frac{U^3}{\sin^2x}$.
Existence of time-periodic solutions for completely resonant 1d nonlinear wave equation of type $-\partial_{tt}U+\partial_{xx}U=f(x,U)$
were first proved in the case of periodic boundary conditions in $x$ for the case $f(x,u)=\pm U^3$ in \cite{Lid-Shulman} for badly approssimable irrational freuqencies $\omega$, and then extended in the case of Dirichlet Boundary conditions for a more general class of nonlinearities of type $f(x,u)=a(x)u^p +O(u^{p+1})$ in \cite{Bambusi-Paleari2001,Berti-BolleCMP,Berti-BolleNA}.\\
Special quasi-periodic solutions with two frequencies living in a $0-$measure set have been constructed in \cite{Procesi2005,Berti-Procesi2005} for completely resonant wave equations.\\
The first existence results of time-periodic solutions with frequencies in a large Cantor set for completely resonant 1d wave equations with Dirichlet boundary conditions and analytic nonlinearity were proved in \cite{Gentile-Mastropietro-Procesi,Berti-BolleCantor,BaldiBertiWave}. The small divisors problem is overcome in \cite{Gentile-Mastropietro-Procesi} by Lindstedt series expansions, while in \cite{Berti-BolleCantor} by a Nash-Moser iterative scheme. In \cite{ Berti-BolleNodea} the result is extended in the case when the non linear function $f$ has only finite regularity ($f\in C^k$). 
The work \cite{BaldiBertiVibr} proves existence of periodic solutions via a Nash-Moser scheme for a more general equation, which describes how waves propagates in nonhomogeneous media, with a forcing nonlinear term which ensures the existence of nondegenerate solutions of the Bifurcation Equation. 

None of the previous mentioned results implies the existence of periodic solutions of \eqref{toro} since the nonlinear term is singular, namely $\frac{1}{\sin^2(x)}\notin H^r(\mathbb{S}^1),\, \forall r\in \R$.\\
}
\end{enumerate}
A much richer literature concerning existence of periodic and quasi periodic solutions of nonresonant or partially resonant nonlinear wave/Klein Gordon equations is available. In these cases one uses the mass or the potential in order to impose suitable nonresonance conditions on the linear frequencies. In this cases the bifurcation equation will be finite-dimensional.

We quote the KAM results of Kuksin \cite{Kuksin}, Wayne \cite{Wayne1990} and Poschel \cite {Poschel1996,Poschel1996'} for 1d analytic wave equations with Dirichlet boundary conditions. For periodic boundary conditions Craig and Wayne \cite{Craig-Wayne1993} introduced the Lyapunov-Schmidt decomposition approach and showed existence of time-periodic solutions via a Nash-Moser iterative scheme, extended for also quasi-periodic solutions by Bourgain \cite{Bourgain1994}. 
Subsequently Chierchia-You \cite{Chierchia-You2000} managed to prove existence of quasi-periodic solutions for wave equations extending the KAM approach \a la Kuksin. 

In higher space dimension we quote the works  \cite{Bourgain1995,BertiBolle2010,Berti-Bolle-Procesi2010,Berti-Procesi2011} proving the existence of periodic solutions and \cite{Bourgain2005,Berti-Bolle2012,Berti-Corsi-Procesi2015,Grebert-Paturel2016,Berti-Bolle_book} for quasi-periodic solutions.

All the above works treated the nonresonant or partially resonant case (when the bifurcation equation is finite dimensional). Existence of quasiperiodc solutions for completely resonant wave equations are still poorly understood, the only existence results in this direction are \cite{Procesi2005,Berti-Procesi2005} which prove existence of quasiperiodic solutions with two frequencies. It is a very interesting 
open question to establish if equation \eqref{mainequation} admits quasi-periodic solutions.

\subsection{Ideas of the proof}\label{sec.ideas}
In order to prove Theorem \ref{teoremone} we perform a Lyapunov-Schmidt decomposition, and then solve the Range equation where "small divisors" appears by a Nash-Moser scheme.\\
For this purpose, it is convenient to rescale the size $u\longmapsto \varepsilon^{\frac{1}{2}}u$ and the period $t\mapsto \omega t$ of the solutions of \eqref{mainequation}, and look for $2\pi-$periodic spherically symmetric solutions (according to the Definition \ref{sfe}) of the equation
\begin{equation}\label{maineq}
\mathcal{L}_\omega u=\varepsilon u^3,
\end{equation}
where we used the notations
\begin{equation}\label{def.A}
\mathcal{L}_\omega := -\omega^2\partial_{tt} - A,\quad \quad A :=
-\Delta_{\mathbb{S}^3} + \Id \, .
\end{equation}
We perform a Lyapunov-Schmidt  decomposition of equation \eqref{maineq} by introducing
\begin{gather}
	\label{def.V}
	\begin{aligned}
	V :=	\ker (-\partial_{tt} - A)&=\Big\lbrace u(t,x)=\sum\limits_{j,\ell\in \N} {u}_{\ell, j}\cos (\ell t)\egen_j(x)\ :\ {u}_{\ell, j}=0,\, \forall \ell\neq \omgen_j \Big\rbrace\\
	&= \Big\lbrace v(t,x)=\sum\limits_{j \in \N} {v}_{j}\cos (\omega_j t)\egen_j(x) \Big\rbrace\,,
	\end{aligned}
	\\
	\label{def.W}
	\begin{aligned}
		W:=&
		\operatorname{Rg}(-\partial_{tt} -A)=\Big\lbrace u(t,x)=\sum\limits_{j,\ell\in \N}{u}_{\ell, j}\cos (\ell t)\egen_j(x) \ : \,{u}_{\ell, j}=0,\, \forall \ell= \omgen_j \Big\rbrace\,\\
	&	=\Big\lbrace w(t,x)=\sum\limits_{\ell\in \N} \cos(\ell t)w_\ell (x)\,\, :\,\,w_\ell \in \cH^0_x,\,\, \langle w_\ell , \egen_{\ell-1}\rangle_{\cH^0_x}=0\Big\rbrace.
		\end{aligned}
\end{gather}
Note that $W=V^{\perp}$ in $X_{\sigma, s},\, \forall \sigma, s$.
We denote by $\Pi_V$, $\Pi_W$, the orthogonal projectors on $V$ and $W$ respectively.
By denoting $v=\Pi_V u,\, w=\Pi_W u$, a function $u$ solves \eqref{maineq} if and only if $v,w$ solve the system
\begin{equation}\label{lyap}
\begin{cases}
(\omega^2-1) A v - 	\varepsilon\Pi_{V} (v +w)^3 = 0 \,,\\

\mathcal{L}_{\omega} w -  \varepsilon\Pi_W (v+w)^3 = 0\,.
\end{cases}
\end{equation}
By imposing the natural amplitude-to-frequency relation $\omega^2-1=\varepsilon$, namely 
\begin{equation}\label{amplfreq}
\omega=\omega(\varepsilon)=\sqrt{1+\varepsilon} \, ,
\end{equation}the system \eqref{lyap} becomes 
\begin{gather}
\label{v.eq}
A v -\Pi_{V} (v +w)^3 = 0 \,,\\
\label{w.eq}
\mathcal{L}_{\omega} w -  \varepsilon\Pi_W (v +w)^3 = 0\,.
\end{gather}
For the Bifurcation Equation \eqref{v.eq} we will first find explicit solutions of the "resonant system" $Av=\Pi_V v^3$ (which corresponds to \eqref{v.eq} in the case $w=0$). These solutions have the form $\bar{v}_m=\pm\sqrt{\frac{4\omega_m}{3}}\cos(\omega_mt)\egen_m(x),\,\, m\in \N$ and we prove their nondegeneracy. In this way one can apply an Implicit function argument and find a solution $v_m(w)$ of \eqref{v.eq} for any  $\|w\|_{\sigma,s}\leq \rho$.\\
Since $\bar{v}_m$ are one-mode functions, they are clearly analytic, namely $\bar{v}_m \in X_{\sigma,s}$ for any $ \sigma,s$.\\
Then, in order to show that the Range equation \eqref{w.eq} admits solutions when $v=v_m(w)$ we build a Nash-Moser Iterative scheme in order to deal with the loss of derivatives caused by $\mathcal{L}_\omega^{-1}=(-\omega^2\partial_{tt}+\Delta_{\mathbb{S}^3}-\mathds{1})^{-1}$.  We construct a sequence of approximate solutions $\lbrace
w_n \rbrace_{n\in \mathbb{N}}$ which converges in the analytic space $X_{\frac{\sigma}{2},s}$ to a solution of \eqref{w.eq}.
The scheme we use to prove the existence of a solution for the Range equation \eqref{w.eq} is rather general. It is based on the following properties:
\begin{itemize}
\item{Algebra estimates \eqref{algebra}: they are fundamental to control the nonlinear term and every term where a multiplication appears. For this reason we require a minimal regularity in time ($s>\frac{1}{2}$) and in space ($r>\frac{3}{2}$).}
\item{Smoothing estimates \eqref{smooth}: these estimates are a standard tool when working with scales of Banach spaces, which is our case for $X_{\sigma,s}$.}
\item{Invertibility with loss of derivatives of the Linearized Operator.\\
The whole Section \ref{invlin} is devoted to prove Proposition \ref{inversolinearizzato}.
In order to build iteratively a sequence of approximating solutions $\lbrace w_n\rbrace_{n\in \N}$
of the Range equation \eqref{w.eq} 
 one has to invert the linearized operator 
\begin{equation}\label{pertL}
\mathfrak{L}_{n+1}(\varepsilon,w_{n}):=-\omega^2\partial_{tt}+\Delta_{\mathbb{S}^3}-\mathds{1}-3\varepsilon P_{n+1}\Pi_W(v(w_n)+w_n)^2\cdot \, ,  
\end{equation}
obtained by linearizing \eqref{w.eq} and projecting it on the first $L_{n+1}\sim 2^{n+1}$ time frequencies.   In Section \ref{invlin} we analyze in depth this operator and show that $\mathfrak{L}_{n+1}(\varepsilon,w_n)^{-1}$ exists, and loses $\tau-1$ Sobolev derivatives, if the time frequency $\omega$ satisfies suitable first order Melnikov conditions.
First note that the unperturbed operator $\mathcal{L}_\omega$ in \eqref{def.A},  which is diagonal on the time-space basis $\lbrace \exp(i \ell t) \egen_j(x) \rbrace_{\ell \neq j+1}$, 
 loses $\tau-1$ Sobolev derivatives in time if the frequency $\omega$ satisfies Diophantine type conditions \eqref{Meln0}.

Then, to prove Proposition \ref{inversolinearizzato}, 
we expand functions $u(t,x)\in X_{\sigma,s}$ only in time-Fourier basis $\lbrace
\exp (i\ell t)\rbrace_{\ell \in \Z}$, and we split the operator $\mathfrak{L}_{n+1}(\varepsilon,w_n)$ into its diagonal part $D=\lbrace D_\ell \rbrace_{\ell \in \Z}$ and its 
off-diagonal part with respect to this basis.
Each $ D_\ell $ is an operator, acting on functions depending only on the  space variable $x$, of the form 
$$
D_\ell=\omega^2\ell^2+\Delta_{\mathbb{S}^3}-\mathds{1}-\varepsilon b_0(x) \, , 
$$ 
where $b_0 (x) $ is the mean in time of the nonlinear term $b(t,x)=3(v(w_n)(t,x)+w_n(t,x))^2$.\\
With Sturm-Liouville theory we prove that for $\varepsilon$ small enough the operator $-\Delta_{\mathbb{S}^3}+\mathds{1}+\varepsilon b_0 (x)  $ can be diagonalized and its eigenvalues are close to $\omega^2\ell^2-\omega_j^2-\varepsilon \overline{b_0}$ (see Lemma \ref{sturm}), where $\overline{b_0}$ denotes the mean in $x$ of the function $b_0 (x) $.
A key role is played by the  Sobolev Embedding Lemma \ref{sob} which states that Sobolev Spaces of spherically symmetric functions $\cH^r_x$ can be embedded into the standard Sobolev Spaces on the unit Circle $\mathbb{S}^1$ with flat metric $H^{r+1+\delta}(\mathbb{S}^1,dx)$ for any $\delta > 0 $ arbitrarily small. This embedding implies in particular the off-diagonal decay estimates of Lemma \ref{elements} 
for the matrix entries which represent the action of the multiplication operator for $ b_0 (x) $ 
 in the basis
 $\egen_j(x)  $ of $ \cH^0_x $.
With these estimates one sees that natural conditions to be asked for $\omega$ in order to 
prove the invertibility of $ D $
and that its inverse 
loses $\tau-1$ derivatives are the so called first order Melnikov conditions
$$
|\omega\ell-\omega_j|\geq \frac{\gamma}{\langle \ell \rangle^{\tau}} \, , 
\quad 
\left|\omega\ell-\omega_j-\varepsilon\frac{\overline{b_0}}{2\omega_j}\right|\geq \frac{\gamma}{\langle \ell \rangle^{\tau}} \, , \quad \forall \ell, j \, .  
$$ 
A further analysis of the small divisors enable to control the off-diagonal part of the operator  $ \mathfrak{L}_{n+1}(\varepsilon,w_{n}) $. In Lemma \ref{smalldiv} it is shown that for $\tau \in ]1,2[$ the product of two small divisors is larger than a constant if the singular sites are close enough, deducing suitable bounds for the off-diagonal operators, see Lemmas \ref{r1}, \ref{r2}, which enable to treat them as a perturbation of the main diagonal part $ D $. In conclusion the linearized operator $ \mathfrak{L}_{n+1}(\varepsilon,w_{n}) $ can be inverted by Neumann Series.}
\end{itemize}
The existence of solutions of the Range equation is then deduced in Section 	\ref{sec.w} by 
implementing a Nash-Moser iterative scheme. 
At last we prove 
in Proposition \ref{measure} that the set of amplitudes $ \varepsilon $ for which we find periodic solutions has asymptotically full measure at 0.

\smallskip

 Now we explain why, despite the fact that the Hopf-Plane waves solutions in \cite{cha.smu} of equation \eqref{mainequation} are nondegenerate, we don't know yet how to prove their existence for an asymptotically full measure set of frequencies.
 The missing ingredient is the analogous of the estimate \eqref{elements} which we are not able to verify since the eigenfunctions in Hopf coordinate have a much more difficult explicit form with respect to the spherically symmetric eigenfunctions $\egen_j(x)$.

\vspace{0.3cm}
\textbf{Acknowledgments.} I thank Massimiliano Berti and Beatrice Langella for useful discussions during the preparation of this work. Research supported by PRIN 2020 (2020XB3EFL001) ``Hamiltonian and dispersive PDEs".
\section{Solution of the Bifurcation Equation}\label{sec.bifurcation}
In this section we solve the bifurcation equation \eqref{v.eq} by the Implicit function theorem.\\
First we remind the product rule of the spherically symmetric eigenfunctions of $-\Delta_{\mathbb{S}^3}$ defined in \eqref{def.ej}:
\begin{equation}\label{prodotto}
\egen_j(x)\egen_k(x)=\sum\limits_{\ell=0}^{\min\lbrace j,k\rbrace} \egen_{|j-k|+2\ell}(x) \, .
\end{equation}
Note that the product $\egen_j\cdot\egen_k$ contains the frequencies from $|j-k|$ to $j+k$ with the same parity.\\
\begin{proposition}\label{soluzbif}
Let $m\in \N$, $\sigma\geq 0$, $s>\frac{1}{2}$. There exists $\rho :=\rho(m,\sigma,s)>0$ and a smooth solution
\begin{equation}\label{vmdm}
v_m: \mathcal{D}^{W}_{\sigma,s}(\rho)\longrightarrow V\cap X_{\sigma,s+2},\,\,\, \textrm{where}\,\,\,\mathcal{D}^{W}_{\sigma,s}(\rho):=\lbrace w\in W\cap X_{\sigma,s}\, :\, \|w\|_{\sigma,s}\leq \rho \rbrace
\end{equation}
of the bifurcation equation \eqref{v.eq}, satisfying the following properties:
\begin{itemize}
\item{$v_m(0)=\bar{v}_m$ where $\bar{v}_m:=\alpha_m\cos(\omega_m t)\egen_m(x)$ and $\alpha_m=\pm\sqrt{\frac{4\omega_m}{3}}$.}
\item{$v_m$ has bounded derivatives on $\mathcal{D}^{W}_{\sigma,s}(\rho)$, in particular $\exists R=R(\sigma, s,m)>0$ such that
\begin{equation}\label{stimev}
\max\limits_{j=0,1,2,3}\sup\limits_{w\in\mathcal{D}^{W}_{\sigma,s}(\rho)}\|D_w^jv_m(w)\|_{\sigma,s}  \leq R,
\end{equation}
where $\|D^j_wv_m(w)\|_{\sigma,s}:=\sup\limits_{\|h\|_{\sigma,s}\leq 1, h\in W}\|D^j_wv_m(w)[h^j]\|_{\sigma,s+2}$ is the operatorial norm as a $j-$linear continuous map.}
\end{itemize} 
\end{proposition}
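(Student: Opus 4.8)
The plan is to apply the Implicit Function Theorem to the map
$$
\mathcal{F}: V\cap X_{\sigma,s+2} \times \mathcal{D}^{W}_{\sigma,s}(\rho) \longrightarrow V\cap X_{\sigma,s}, \qquad \mathcal{F}(v,w) := Av - \Pi_V(v+w)^3,
$$
around the point $(\bar v_m, 0)$. First I would check that $\mathcal{F}$ is well-defined and smooth: by the algebra estimate \eqref{algebra} with $r=2>\tfrac32$ and $s>\tfrac12$, the cubic map $(v,w)\mapsto (v+w)^3$ is a bounded polynomial (hence $C^\infty$) map from $X_{\sigma,s}\times X_{\sigma,s}$ to $X_{\sigma,s}$, the projector $\Pi_V$ is continuous, and $A=-\Delta_{\mathbb{S}^3}+\Id$ is an isomorphism from $V\cap X_{\sigma,s+2}$ onto $V\cap X_{\sigma,s}$ with $A\,\egen_j=\omega_j^2\egen_j$. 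So $\mathcal{F}$ is smooth and, as a composition/sum of bounded multilinear maps with a linear iso, has bounded derivatives of all orders on bounded sets; this will ultimately give \eqref{stimev}.

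Next I would verify $\mathcal{F}(\bar v_m,0)=0$, i.e.\ that $\bar v_m=\alpha_m\cos(\omega_m t)\egen_m(x)$ solves the resonant equation $Av=\Pi_V v^3$. Writing $v=\bar v_m$, one has $Av=\omega_m^2\alpha_m\cos(\omega_m t)\egen_m$. For the cube, expand $\cos^3(\omega_m t)=\tfrac34\cos(\omega_m t)+\tfrac14\cos(3\omega_m t)$ and use the product rule \eqref{prodotto} to write $\egen_m^3$ as a sum of $\egen_k$ with $k$ ranging over $\{0,2,\dots\}$ up to $3m$ of the parity of $m$. Projecting onto $V$ keeps only the terms where the time frequency equals $\omega_k=k+1$; since $\cos(\omega_m t)$ pairs with frequency $\omega_m$ (i.e.\ $k=m$) and $\cos(3\omega_m t)$ would require $k=3m+2\notin$ the allowed range, only the $\tfrac34\cos(\omega_m t)$ part times the $\egen_m$ component of $\egen_m^3$ survives. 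The coefficient of $\egen_m$ in $\egen_m^3=\egen_m\cdot\egen_m^2$: from \eqref{prodotto}, $\egen_m^2=\sum_{\ell=0}^m\egen_{2\ell}$, so $\egen_m\cdot\egen_m^2=\sum_{\ell=0}^m\egen_m\egen_{2\ell}$, and $\egen_m\egen_{2\ell}$ contains $\egen_m$ precisely when $|m-2\ell|\le m\le m+2\ell$ with the right parity — i.e.\ for every $\ell\in\{0,\dots,m\}$ it contributes exactly one copy of $\egen_m$ (taking the term with index $|m-2\ell|+2\ell'=m$), giving coefficient $m+1=\omega_m$. Hence $\Pi_V\bar v_m^3=\tfrac34\alpha_m^3\,\omega_m\cos(\omega_m t)\egen_m$, and $\mathcal{F}(\bar v_m,0)=0$ forces $\omega_m^2\alpha_m=\tfrac34\alpha_m^3\omega_m$, i.e.\ $\alpha_m^2=\tfrac43\omega_m$, matching the claimed $\alpha_m=\pm\sqrt{4\omega_m/3}$.

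The crucial step is showing that $D_v\mathcal{F}(\bar v_m,0)=A-3\,\Pi_V(\bar v_m^2\,\cdot\,):\,V\cap X_{\sigma,s+2}\to V\cap X_{\sigma,s}$ is invertible — this is the \emph{nondegeneracy} of $\bar v_m$. I would diagonalize this linear operator on $V$. A basis of $V$ is $\{\cos(\omega_j t)\egen_j\}_{j\in\N}$; on this basis $A$ acts as $\omega_j^2$. The perturbation $3\Pi_V(\bar v_m^2\cdot)$ involves $\bar v_m^2=\alpha_m^2\cos^2(\omega_m t)\egen_m^2=\tfrac{\alpha_m^2}{2}(1+\cos(2\omega_m t))\sum_{\ell=0}^m\egen_{2\ell}$; multiplying by $\cos(\omega_j t)\egen_j$ and projecting onto $V$ keeps only resonant frequency–index pairs. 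A careful but finite bookkeeping (the interaction only couples finitely many low modes $j\lesssim 3m$ to one another, while for large $j$ the off-resonance means $\Pi_V$ kills everything except possibly the "diagonal" $\cos(\omega_j t)\egen_j$ contribution from the time-mean part $\tfrac{\alpha_m^2}{2}\sum_\ell\egen_{2\ell}$ of $\bar v_m^2$) shows the operator is block-diagonal: a finite block on $\mathrm{span}\{\cos(\omega_j t)\egen_j: 0\le j\lesssim 3m\}$ plus, on the complement, $A$ minus a bounded diagonal correction with entries $O(1)$ against eigenvalues $\omega_j^2\to\infty$, hence invertible there. For the finite block one computes the matrix explicitly and checks its determinant is nonzero — this is where the special value $\alpha_m^2=\tfrac43\omega_m$ and the structure of \eqref{prodotto} must conspire; I expect this finite linear-algebra computation to be the main obstacle, and the key point is that $0$ is not an eigenvalue of the finite block (equivalently, $\bar v_m$ is a nondegenerate critical point of the associated finite-dimensional reduced action). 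Granting invertibility, the IFT yields a $C^\infty$ map $w\mapsto v_m(w)$ on a ball $\mathcal{D}^W_{\sigma,s}(\rho)$ with $v_m(0)=\bar v_m$, solving \eqref{v.eq}; since $A^{-1}$ gains two derivatives, $v_m(w)\in V\cap X_{\sigma,s+2}$, and the bounded-derivative estimate \eqref{stimev} follows from the IFT formula for the derivatives of $v_m$ together with the uniform bounds on the derivatives of $\mathcal{F}$ and on $(D_v\mathcal{F})^{-1}$ (shrinking $\rho$ so the inverse stays bounded by a Neumann argument). Finally, $v_m(w)\in V$ means its components along $\cos(\ell t)\egen_j$ vanish unless $\ell=\omega_j$, so in particular $v_m(w)$ is real-valued and even in time, consistent with the ansatz.
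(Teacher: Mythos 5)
Your strategy coincides with the paper's: set up $\mathcal{F}(v,w)=Av-\Pi_V(v+w)^3$, verify $\mathcal{F}(\bar v_m,0)=0$ (your bookkeeping of the coefficient $\langle \egen_m^3,\egen_m\rangle_{\cH^0_x}=\omega_m$ via \eqref{prodotto} is correct), and apply the Implicit Function Theorem once $D_v\mathcal{F}(\bar v_m,0)$ is shown to be invertible. The smoothness and $\cH^{s+2}_x$-gain via $A^{-1}$ are handled correctly, and the derivation of \eqref{stimev} from the IFT and bounded derivatives of $\mathcal{F}$ is the same as in the paper.

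The genuine gap is at the heart of the proposition: you correctly identify the invertibility of $D_v\mathcal{F}(\bar v_m,0)=A-3\Pi_V(\bar v_m^2\,\cdot)$ as the \emph{nondegeneracy} and as "the main obstacle," but then you write "Granting invertibility$\ldots$" and move on. This grants exactly the content that the proposition needs proved. The paper performs the computation: acting on the basis $\cos(\omega_j t)\egen_j$, the coupling only pairs $j$ with $2m-j$ (so the modes are $\{0,\dots,2m\}$, not $\lesssim 3m$), yielding invariant $1$-dimensional subspaces for $j=m$ and $j>2m$, and $2\times 2$ blocks
$A_{m,j}=\begin{pmatrix}\omega_j^2-2\omega_m\omega_j & -\omega_m\omega_j\\ -\omega_m\omega_j & \omega_{2m-j}^2-2\omega_m^2\end{pmatrix}$ for $0\le j\le m-1$,
whose determinant is $\det A_{m,j}=-\omega_j(\omega_m-\omega_j)^2(4\omega_m-\omega_j)<0$. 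It is this explicit negativity (together with $\Lambda_m=-2\omega_m^2\neq 0$ and, for $j>2m$, $\Lambda_j=\omega_j^2-2\omega_m^2>\tfrac12\omega_j^2>0$) that establishes nondegeneracy. A secondary imprecision: your argument for the tail of the operator is only asymptotic ("eigenvalues $\omega_j^2\to\infty$, hence invertible there"); you still need to check that the $O(1)$ correction $-2\omega_m^2$ does not produce a zero eigenvalue for some \emph{intermediate} $j>2m$, which holds here because $\omega_j>2\omega_m$ forces $\omega_j^2>4\omega_m^2>2\omega_m^2$.
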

\begin{proof}
Consider the following operator:
$$
\mathfrak{F}: \left( V\cap X_{\sigma, s+2} \right) \times \left( W \cap X_{\sigma,s}\right) \longrightarrow V\cap X_{\sigma, s},\quad \mathfrak{F}(v,w)=Av-\Pi_V(v+w)^3.
$$
By algebra properties \eqref{algebra} of $X_{\sigma, s}$, and boundedness of $A:X_{\sigma,s+2}\cap V\mapsto X_{\sigma,s}\cap V$, the map $\mathfrak{F}$ is analytic.\\
We shall prove first $\mathfrak{F}(\bar{v}_m,0)=0$, and then that $D_v\mathfrak{F}(\bar{v}_m,0)$ is invertible. Since $\cos^3(\omega_m t)=\frac{3}{4}\cos(\omega_m t)+\frac{1}{4}\cos (3\omega_m t)$, we obtain $\bar{v}_m^3=\left(\frac{3\alpha_m^3}{4}\cos(\omega_m t)+\frac{\alpha_m^3}{4}\cos(3\omega_m t)\right)\egen_m^3(x)$, recalling the definition \eqref{def.V} of $V$ we have
$$\Pi_{V}(\bar{v}_m^3)=\frac{3\alpha_m^3}{4}\langle \egen_m^3,\egen_m\rangle_{\cH^0_x} \cos(\omega_m t)\egen_m(x)+\frac{\alpha_m^3}{4}\langle \egen_m^3,\egen_{3\omega_m-1}\rangle_{\cH^0_x} \cos(3\omega_m t)\egen_{3\omega_m-1}(x).$$
The product rule \eqref{prodotto} implies that $\egen_m(x)^3\in \operatorname{span}\lbrace \egen_j(x)\rbrace_{j=0}^{3m}$, thus $\langle \egen_m^3,\egen_{3\omega_m-1}\rangle_{\cH^0_x}=0$ since $3\omega_m-1=3m+2>3m$, moreover by product rule \eqref{prodotto} and $\cH^0_x$-orthonormality of $\lbrace \egen_j \rbrace_{j\in\N}$, one has $\langle \egen_m^3,\egen_m \rangle_{\cH^0_x}=\left\|\egen_m^2\right\|^2_{\cH^0_x}=\omega_m$. In conclusion:
$$\mathfrak{F}(\bar{v}_m,0)=\left(\alpha_m \omega_m^2-\frac{3\omega_m}{4}\alpha_m^3\right) \cos(\omega_m t)\egen_m(x)=0,\quad \text{since} \, \alpha_m=\pm \sqrt{\frac{4\omega_m}{3}}.$$
Now we shall prove that $D_v\mathfrak{F}(\bar{v}_m,0)$ is invertible. We start by writing explicitly:
\begin{equation}\label{difff}
D_v\mathfrak{F}(\bar{v}_m,0)[h]=Ah-3\Pi_V\left( \bar{v}_m^2h \right).
\end{equation}
We check how the operator $D_v\mathfrak{F}(\bar{v}_m,0)$ acts with respect to the basis $\left\lbrace \cos(\omega_j t)\egen_j(x)\right\rbrace_{j\in \N}$ (which is an orthogonal basis for $V\cap X_{\sigma,s},\, \forall \sigma, s$), namely $\forall j \in \N$ we compute $D_v\mathfrak{F}(\bar{v}_m,0)[\cos(\omega_jt)\egen_j(x)]=\cos(\omega_jt)A\egen_j(x)-3\Pi_V(\bar{v}_m^2\cos(\omega_jt)\egen_j(x))$. In order to do this we start by writing:
$$\cos(\omega_m t)^2\cos(\omega_j t)=\frac{\cos(\omega_j t)}{2}+\frac{\cos(|2\omega_m-\omega_j|t)+\cos((2\omega_m+\omega_j)t)}{4}.$$
We shall now compute $\Pi_V(\cos(\omega_j t)\egen_m^2\egen_j)$, $ \Pi_V(\cos((2\omega_j+\omega_m)t)\egen_m^2\egen_j)$, $\Pi_V(\cos(|2\omega_m-\omega_j| t)\egen_m^2\egen_j)$:
\begin{itemize}
\item{$\Pi_V(\cos(\omega_j t)\egen_m^2\egen_j)=\langle \egen_m^2 \egen_j,\egen_j\rangle_{\cH^0_x}\cos(\omega_j t)\egen_j(x)$
and $\langle\egen_m^2 \egen_j,\egen_j\rangle_{\cH^0_x}=\langle \egen_m^2, \egen_j^2\rangle_{\cH^0_x}=\omega_{\min\lbrace j,m\rbrace}$ by product rule \eqref{prodotto} and the orthonormality of $\lbrace \egen_j \rbrace_{n\in \N}$.}
\item{$\Pi_V(\cos((2\omega_j+\omega_m)t)\egen_m^2\egen_j)=\langle \egen_m^2\egen_j, \egen_{2m+j+2}\rangle_{\cH^0_x}\cos((2\omega_j+\omega_m)t)\egen_{2m+j+2}=0$ because by product rule \eqref{prodotto} $\egen_m^2\egen_j\in \operatorname{span}\lbrace e_k\rbrace_{k=0}^{2m+j}$, and so $\langle\egen_m^2 \egen_j,\egen_{2m+j+2}\rangle_{\cH^0_x}=0$.}
\item{
In order to compute $\Pi_V\left(\cos(|2\omega_m-\omega_j|t)\egen_m^2 \egen_j\right)$ we consider two different cases:
\begin{enumerate}
\item{If $j\leq 2m\Rightarrow |2\omega_{m}-\omega_j|=\omega_{2m-j}$, then as in previous cases we have:\\ $\Pi_V\left(\cos((2\omega_m-\omega_j)t)\egen_m^2 \egen_j\right)=\langle \egen_m^2 \egen_j,\egen_{2m-j}\rangle_{\cH^0_x}\cos(\omega_{2m-j}t)\egen_m^2 \egen_j$.\\
We can rewrite $\langle \egen_m^2 \egen_j,\egen_{2m-j}\rangle_{\cH^0_x}=\langle \egen_m\egen_j, \egen_m\egen_{2m-j}\rangle_{\cH^0_x}$, and using product rule \eqref{prodotto} $\egen_m\egen_j=\sum\limits_{k=0}^{\min\lbrace m,j\rbrace}\egen_{|m-j|+2k},\quad\quad \egen_m\egen_{2m-j}=\sum\limits_{k=0}^{\min\lbrace m,2m-j\rbrace}\egen_{|m-j|+2k}$, it follows:
$$\langle \egen_m\egen_j, \egen_m\egen_{2m-j}\rangle_{\cH^0_x}=\sum\limits_{k=0}^{\min\lbrace m,j\rbrace}\sum\limits_{k'=0}^{\min\lbrace m,2m-j\rbrace}\delta_{k,k'}=\omega_{\min\lbrace j,m,  2m-j\rbrace}.$$}
\item{If $j\geq 2m+1$ then $|2\omega_m-\omega_j|=j-2m-1$, in the case $j=2m+1$ then we have $\Pi_V\left(\egen_m^2 \egen_j\right)=0$ cfr. \eqref{def.V}. If $j>2m+1$ we have $\Pi_V\left(\cos(|2\omega_m-\omega_j|t)\egen_m^2 \egen_j\right)=\cos((j-2m-1)t)\langle \egen_{j-2m-2},\egen_j\egen_m^2\rangle_{\cH^0_x}\egen_{j-2m-2}(x)=0$ because
by product rule \eqref{prodotto} $\egen_m^2(x)\egen_j(x)\in \operatorname{span}\lbrace \egen_k(x)\rbrace_{k=j-2m}^{j+2m}$, then $\langle \egen_{j-2m-2},\egen_j\egen_m^2\rangle_{\cH^0_x}=0 $}
\end{enumerate}
}
\end{itemize}
Now, since $\bar{v}_m=\alpha_n \cos(\omega_mt)\egen_m(x)$, with $\alpha_m=\pm\sqrt{\frac{4\omega_m}{3}}$ we obtain:
\begin{equation}\label{moltiplic}
\begin{aligned}
3\Pi_V\left( \bar{v}_m^2\cos(\omega_jt)\egen_j(x)\right)=\tilde{\beta}_{m,j}\cos(\omega_jt)\egen_j(x)+\gamma_{m,j}\cos(\omega_{2m-j}t)\egen_{2m-j}(x),\\
\text{where} \quad \tilde{\beta}_{m,j}=2\omega_m\omega_{\min\lbrace j,m\rbrace},\quad \gamma_{m,j}=\begin{cases} \omega_m\omega_{\min\lbrace j,2m-j\rbrace}, & \textit{if}\,\, j\leq 2m \\
0,& \textit{if}\, \, j>2m.
\end{cases}
\end{aligned}
\end{equation}
Plugging \eqref{moltiplic} in \eqref{difff} we deduce the following formula:
$$
D_v\mathfrak{F}(\bar{v}_m,0)[\cos(\omega_jt)\egen_j(x)]=\beta_{m,j}\cos(\omega_jt)\egen_j(x)+\gamma_{m,j}\cos(\omega_{2m-j}t)\egen_{2m-j}(x),\quad  \text{where} \,\,\, \beta_{m,j}=2\omega_j^2-\tilde{\beta}_{m,j}.
$$
It follows immediately that $D_v\mathfrak{F}(\bar{v}_m,0)$ admits the following invariant subspaces:
\begin{itemize}
\item{$\textrm{span}\lbrace \cos(\omega_m t)\egen_m(x)\rbrace$, with associated eigenvalue $\Lambda_m=-2\omega_m^2$,}
\item{$\forall j>2m$, $\textrm{span}\lbrace \cos(\omega_j t)\egen_j(x)\rbrace$, with associated eigenvalue $\Lambda_j=\omega_j^2-2\omega_m^2$,}
\item{$\forall 0\leq j \leq m-1$, $\textrm{span}\lbrace \cos(\omega_j t)\egen_{j}(x),\cos(\omega_{2m-j} t)\egen_{2m-j}(x)\rbrace$, with associated symmetric matrix $A_{m,j}:=\begin{pmatrix}
\omega_j^2-2\omega_m\omega_j & -\omega_m\omega_j\\
-\omega_m\omega_j  & \omega_{2m-j}^2-2\omega_m^2
\end{pmatrix}$.\\
A direct computation shows $\det (A_{m,j})=-\omega_j(\omega_m-\omega_j)^2(4\omega_m-\omega_j)<0$, it follows that the eigenvalues of $A_{m,j}$ are not 0.}
\end{itemize}
Hence $D_v\mathfrak{F}(\bar{v}_m,0)$ can be diagonalized by an orthogonal base, and all its eigenvalues are different from 0, moreover $\forall j>2m$ its eigenvalues $\Lambda_j$ satisfy $\frac{1}{2}\omega_{j}^2\leq |\Lambda_j| \leq \omega_j^2$.\\
This is enough to conclude that $D_v\mathfrak{F}(\bar{v}_m,0) \in GL(V\cap X_{\sigma,s+2},V\cap X_{\sigma,s})$ and so $\exists C_m=C(m,\sigma,s)>0$ such that $\|D_v\mathfrak{F}(\bar{v}_m,0)^{-1}h\|_{\sigma,s+2}\leq C_m\|h\|_{\sigma,s},\, \forall h \in V\cap X_{\sigma,s}$.\\
This shows that $\mathfrak{F}$ satisfies the assumptions of Implicit Function theorem near the point $(\bar{v}_m,0)$, and so the thesis follows immediately.
\end{proof}
\remark By the proof of Proposition \ref{soluzbif} one can see that the radius $\rho=\rho(m,\sigma,s)$ can be chosen uniformly in $\sigma,$ when $\sigma$ and $s$ vary respectively over finite intervals $[\sigma_\infty, \sigma_0]$, $[s_0,s_1]$.\\
Having an uniform radius will be useful in the next section since we will construct iteratively a sequence $w_n\in X_{\sigma_n,s}$, with decreasing analytic regularity $\sigma_n$, such that $\|w_n\|_{\sigma_n,s}\leq\rho$ $\forall n$, so that $v_m(w_n)$ will be well defined at each step.

\section{Solution of the Range equation}\label{sec.w}
In this section we fix $\bar{\sigma}>0,\, s>\frac{1}{2},\, \gamma\in\left]0,\frac{1}{6}\right[,\, \tau\in ]1,2[,\,m\in \N$ and we denote for simplicity the function $v(w):=v_m(w)$ defined in Proposition \ref{soluzbif} and the common radius of definition
\begin{equation}\label{rho}\rho:=\min\limits_{\sigma\in \Sigma,\, \tilde{s}\in S}\rho(\sigma, \tilde{s},m),\quad \quad \Sigma:= [0,\bar{\sigma}],\,\,\, S:= \left[s,s+\frac{2\tau(\tau-1)}{2-\tau}\right].
\end{equation}
We want to solve the Range equation
\begin{equation}\label{wrange}
\mathcal{L}_\omega w=\varepsilon\Pi_W\Gamma(w) \, , \quad \omega=\omega(\varepsilon)=\sqrt{1+\varepsilon} \, ,
\end{equation}
where
\begin{equation}\label{Gammone}
\Gamma(w):= f(v(w)+w) \, ,\quad
f(u):=u^3 \, . 
\end{equation}
This entire section is devoted to show the following theorem.
\begin{theorem}\label{mainthmw} For any $\bar{\sigma}>0,\, s>\frac{1}{2},\, \gamma\in\left]0,\frac{1}{6}\right[,\, \tau\in ]1,2[,\,m\in \N$, there exist $\varepsilon_0:=\varepsilon_0( \gamma, \tau,\bar{\sigma},s, m)>0$ small enough, $K_2:=K_2(\bar{\sigma}, s, m)>0$, and a function $\tilde{w}(\cdot) \in C^{1}([0,\varepsilon_0],W\cap X_{\frac{\bar{\sigma}}{2},s})$ satisfying
\begin{equation}\label{stimetilda}
\|\tilde{w}(\varepsilon)\|_{\frac{\bar{\sigma}}{2},s}\leq K_2\frac{\varepsilon}{\gamma},\quad \quad \|\partial_\varepsilon\tilde{w}(\varepsilon)\|_{\frac{\bar{\sigma}}{2},s}\leq K_2\gamma^{-1} \, ,\,\,\, \forall \varepsilon \in [0,\varepsilon_0] \, ,
\end{equation}which is a solution of  the Range equation \eqref{wrange} for any $ \varepsilon$ belonging to
\begin{equation}\label{cantor}
\begin{gathered}
B_\infty:=\bigg\lbrace \varepsilon\in [0,\varepsilon_0]\, :\, \left| \omega(\varepsilon)\ell -\omega_j-\varepsilon \frac{M(\tilde{w}(\varepsilon))}{2\omega_j}\right|\geq \frac{2\gamma}{(\ell+\omega_j)^\tau},\\
|\omega(\varepsilon)\ell-\omega_j|\geq \frac{2\gamma}{(\ell+\omega_j)^\tau},\, \forall \ell,j\in \N\,:\, \ell\geq \frac{1}{3\varepsilon},\, \ell\neq \omega_j \bigg\rbrace,\\
\textit{where}\quad   M(w):=\frac{1}{2\pi}\int\limits_{0}^{2\pi}\int\limits_0^\pi (\partial_u f)(w+v(w))(t,x) \dbar x dt \, .
\end{gathered}
\end{equation}
\end{theorem}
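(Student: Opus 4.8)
The plan is to construct the solution $\tilde w(\varepsilon)$ via a Nash--Moser iterative scheme, defining a sequence $\{w_n\}_{n\in\N}$ where $w_n\in W\cap X_{\sigma_n,s}$ lives in a strip of analyticity $\sigma_n$ that decreases from $\bar\sigma$ down to $\bar\sigma/2$ (for instance $\sigma_n=\bar\sigma(\tfrac12+2^{-n-1})$), and whose time-frequency support is truncated at $L_n\sim 2^n$. The step $n\mapsto n+1$ consists in solving the linearized equation
\begin{equation*}
\mathfrak{L}_{n+1}(\varepsilon,w_n)\,h_{n+1}=-\big(\mathcal{L}_\omega w_n-\varepsilon P_{n+1}\Pi_W\Gamma(w_n)\big),\qquad w_{n+1}:=w_n+h_{n+1},
\end{equation*}
where $\mathfrak{L}_{n+1}(\varepsilon,w_n)$ is the truncated linearized operator \eqref{pertL}. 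The first task is to set up the scheme: fix the sequences $\sigma_n$, $L_n$, and the target sizes $\|w_n\|_{\sigma_n,s}\lesssim \varepsilon/\gamma$, $\|w_{n+1}-w_n\|_{\sigma_{n+1},s}\lesssim (\varepsilon/\gamma)\,\delta^{2^n}$ for some $\delta<1$, which forces a smallness condition $\varepsilon_0=\varepsilon_0(\gamma,\tau,\bar\sigma,s,m)$; in particular $\|w_n\|_{\sigma_n,s}\leq\rho$ at every step, so $v(w_n)=v_m(w_n)$ from Proposition \ref{soluzbif} is well defined with uniform bounds \eqref{stimev}.

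The heart of the iteration is the inductive estimate of the new error. Writing $Q_n:=\mathcal{L}_\omega w_n-\varepsilon P_{n+1}\Pi_W\Gamma(w_n)$, a Taylor expansion of $\Gamma$ together with the definition of $h_{n+1}$ produces
\begin{equation*}
Q_{n+1}=\underbrace{(P_{n+2}-P_{n+1})\big(\mathcal{L}_\omega w_{n+1}-\varepsilon\Pi_W\Gamma(w_{n+1})\big)}_{\text{high-frequency tail}}-\varepsilon P_{n+2}\Pi_W\big(\Gamma(w_{n+1})-\Gamma(w_n)-D\Gamma(w_n)h_{n+1}\big).
\end{equation*}
The first piece is controlled by the smoothing estimates \eqref{smooth} (paying a small loss of analyticity $\sigma_n-\sigma_{n+1}$ against a power of $L_n$), the second by the algebra property \eqref{algebra} applied to the quadratic-in-$h_{n+1}$ remainder, using that $w\mapsto v(w)$ has bounded derivatives. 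Inverting $\mathfrak{L}_{n+1}$ costs $\tau-1$ space derivatives by Proposition \ref{inversolinearizzato}; this is exactly why the parameter space $S$ in \eqref{rho} was inflated by $\tfrac{2\tau(\tau-1)}{2-\tau}$, so that after finitely (uniformly) many derivative losses one still stays inside $S$ --- one runs the iteration in the fixed Sobolev index $s$ but trades derivatives against the shrinking analytic width, which is the standard mechanism for analytic Nash--Moser schemes. The superexponential gain $\delta^{2^n}$ beats the polynomial losses $L_n^{\tau-1}$, $(\sigma_n-\sigma_{n+1})^{-N}$, closing the induction. Summing the telescoping series gives $w_\infty:=\lim_n w_n\in W\cap X_{\bar\sigma/2,s}$ with $\|w_\infty\|_{\bar\sigma/2,s}\leq K_2\varepsilon/\gamma$, solving \eqref{wrange} for every $\varepsilon$ such that the Melnikov conditions used along the way hold.

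Next I would handle the $C^1$ dependence on $\varepsilon$ and the passage from the running Cantor set to the fixed set $B_\infty$ in \eqref{cantor}. Differentiating the fixed-point/iteration relations in $\varepsilon$ gives a linear equation for $\partial_\varepsilon w_n$ governed again by $\mathfrak{L}_{n+1}$, whose invertibility with loss $\tau-1$ yields $\|\partial_\varepsilon w_n\|_{\sigma_n,s}\lesssim \gamma^{-1}$ uniformly, hence $\tilde w(\cdot)\in C^1([0,\varepsilon_0],W\cap X_{\bar\sigma/2,s})$ with the bounds \eqref{stimetilda}; here one must check that $M(\tilde w(\varepsilon))$ --- the mean of $\partial_u f$ evaluated at $w+v(w)$, which plays the role of $\overline{b_0}/2\cdot(2\omega_j)^{-1}$-type correction --- is itself $C^1$ in $\varepsilon$, so that the conditions defining $B_\infty$ are conditions on a $C^1$ function. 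The slightly delicate point is consistency: at stage $n$ the invertibility of $\mathfrak{L}_{n+1}(\varepsilon,w_n)$ requires Melnikov conditions with the nonlinear term built from $w_n$, not from the limit $\tilde w(\varepsilon)$, and only for $\ell\leq L_{n+1}$; one shows that if $\varepsilon\in B_\infty$ then, since $\|w_n-\tilde w(\varepsilon)\|\lesssim (\varepsilon/\gamma)\delta^{2^n}$ and the excised set at step $n$ uses the sharper constant $2\gamma$ against the actual $\gamma$ needed, the perturbation of the divisors $|\,\varepsilon M(w_n)/(2\omega_j)-\varepsilon M(\tilde w(\varepsilon))/(2\omega_j)\,|\lesssim \varepsilon^2\delta^{2^n}/\gamma$ is reabsorbed --- and the restriction $\ell\geq 1/(3\varepsilon)$ in \eqref{cantor} is precisely what lets the low-frequency divisors ($\ell<1/(3\varepsilon)$, where $|\omega^2\ell^2-\omega_j^2|\gtrsim 1$ regardless) be treated trivially.

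I expect the main obstacle to be the invertibility step, i.e. verifying the hypotheses of Proposition \ref{inversolinearizzato} at each stage with constants uniform in $n$: one needs that the potential $b_0(x)$ built from $v(w_n)+w_n$ stays small and that the Melnikov conditions defining $B_\infty$ (stated with $M(\tilde w(\varepsilon))$ and the shifted threshold $2\gamma$) genuinely imply the conditions required by Proposition \ref{inversolinearizzato} for the operator $\mathfrak{L}_{n+1}(\varepsilon,w_n)$ for all $\ell\leq L_{n+1}$; this is a quantitative perturbation argument comparing $M(w_n)$ with $M(\tilde w(\varepsilon))$ and exploiting the off-diagonal decay from Lemma \ref{elements} together with Lemma \ref{smalldiv}. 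Everything else --- the telescoping, the smoothing trade-off, the algebra bounds on the quadratic remainder --- is routine once the scheme's parameters are tuned. The measure estimate \eqref{fullmeas} is then proved separately in Proposition \ref{measure}.
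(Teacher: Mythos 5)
Your overall strategy---a Nash--Moser iteration with truncation $L_n\sim 2^n$, shrinking analytic widths $\sigma_n\downarrow\bar\sigma/2$, superexponential convergence of $h_{n+1}=w_{n+1}-w_n$, the loss-of-analyticity/gain-of-Sobolev trade to verify the high-norm hypothesis $\|w_n\|_{\sigma,s+\frac{2\tau(\tau-1)}{2-\tau}}<\rho$ of Proposition \ref{inversolinearizzato}, and an a posteriori comparison of the Cantor conditions for $M(w_n)$ with those for $M(\tilde w(\varepsilon))$ exploiting the $2\gamma$ slack---is the same as the paper's, and the quantitative ingredients you identify (algebra \eqref{algebra}, smoothing \eqref{smooth}, the linearized inversion) are the right ones.

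However, there is a genuine gap: you never address how $\tilde w(\varepsilon)$ comes to be defined and $C^1$ on \emph{all} of $[0,\varepsilon_0]$. At each step of your scheme $w_{n+1}$ only exists for those $\varepsilon$ satisfying the Melnikov conditions at stages $0,\dots,n+1$, i.e.\ on a shrinking family of sets $\mathcal{G}_{n+1}\subset\mathcal{G}_n$, and these sets depend on the previous iterates. Since $B_\infty$ in \eqref{cantor} is stated in terms of $M(\tilde w(\varepsilon))$, one must already have a globally defined $C^1$ function $\tilde w(\cdot)$ before $B_\infty$ even makes sense, and your proposal implicitly uses $\tilde w$ as if it existed on $[0,\varepsilon_0]$ without constructing it there. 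The paper fills this hole with a Whitney $C^1$-extension (Lemma \ref{wit}): each $h_i$ is multiplied by a smooth cutoff $\psi_i$ supported in a slightly shrunk interior $\tilde{\mathcal{G}}_i\subset\mathcal{G}_i$ (margin $2\nu/L_i^3$), producing extensions $\tilde h_i\in C^1([0,\varepsilon_0])$ that coincide with $h_i$ on $\tilde{\mathcal{G}}_i$. This is not cosmetic: the derivative bound $\|\partial_\varepsilon\tilde w\|\lesssim\gamma^{-1}$ in \eqref{stimetilda} comes precisely from differentiating the cutoffs (which costs $L_i^3/\nu$ with $\nu\sim\gamma$), not from the loss $\tau-1$ in the inversion as you suggest---indeed the paper shows $\|\partial_\varepsilon w_n\|\leq K_1'$ independently of $\gamma$ in \eqref{estiwn}. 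Relatedly, the consistency check you sketch must be run as the inclusion $B_n\subset\tilde{\mathcal{G}}_n$ for all $n$ (Lemma \ref{gammanu}), using the interior margin $2\nu/L_n^3$ to absorb both $|\varepsilon-\varepsilon'|$ perturbations and the discrepancy $\|w_n(\varepsilon')-\tilde w(\varepsilon)\|$; without the interior shrinking this induction does not close.
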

The proof of this theorem will be consequence of several lemmas and propositions. We introduce orthogonal subspaces of the Range $W$ defined in \eqref{def.W}. Given $L_0\in \N_0$ (fixed later), we define for every $n\in \N$  
\begin{equation}\label{rangesplitting}
\begin{aligned}
&W^{(n)}:=\left\lbrace w\in W\, :\, w=\sum\limits_{0\leq\ell\leq L_n}\cos(\ell t)w_\ell (x) \right\rbrace,\quad
\\ &W^{(n)\perp}:=\left\lbrace w\in W\, :\, w=\sum\limits_{\ell> L_n}\cos(\ell t)w_\ell (x) \right\rbrace,\quad L_n:=L_02^n \, ,
\end{aligned}
\end{equation}
and we denote as $P_n$ and $P_n^\perp$ the respective orthogonal projectors.
\remark\label{remarkemme} The function $M(w)$ defined in \eqref{cantor} is a smooth map in $X_{\sigma,s}$ for $\sigma >\frac{1}{2}$, $s>\frac{3}{2}$ with bounded derivatives on bounded sets since $f$ is analytic map by \eqref{algebra} and $v$ is smooth by Proposition \ref{soluzbif}.
\subsection{Fundamental Properties}
Here we write the 3 fundamental properties we use to solve the Range equation by means of an iterative Nash-Moser scheme:
\begin{lemma}[REGULARITY of $\Gamma$]\label{gamma}
For any $s>\frac{1}{2}$ and $\bar{\sigma}>0$, there exist $\rho:=\rho(\bar{\sigma},s)>0$ and $R':=R'(\bar{\sigma},s)$ such that $\Gamma(\cdot)\in C^2\left(\mathcal{D}_{\sigma,s}^W(\rho)\right)$ (where $\mathcal{D}_{\sigma,s}^W(\rho)$ was defined in \eqref{vmdm}) and $\forall \sigma\in [0,\bar{\sigma}]$
\begin{equation}\label{R'}
\max\limits_{j=0,1,2,3}\sup\limits_{w\in \mathcal{D}^W_{\sigma,s}(\rho)} \|D^j\Gamma(w)\|_{\sigma,s}\leq R'.
\end{equation}
Here we denoted for $j=1,2,3$ $\|D^j\Gamma(w)\|_{\sigma,s}=\sup\limits_{\|h\|_{\sigma,s}\leq 1,h\in W}\|D^{j}\Gamma(w)[h^j]\|_{\sigma,s}$ the operatorial norm of $D^j\Gamma(w)$ in $B^j(X_{\sigma,s},X_{\sigma,s})$ which consists of all continuous $j-$linear maps from $X_{\sigma,s}^j$ to $X_{\sigma,s}$.
\end{lemma}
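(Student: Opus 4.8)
The plan is to prove Lemma \ref{gamma} by reducing everything to the algebra estimate \eqref{algebra} and the regularity of $v(\cdot)=v_m(\cdot)$ established in Proposition \ref{soluzbif}. Recall $\Gamma(w)=f(v(w)+w)$ with $f(u)=u^3$. The composition $w\mapsto v(w)+w$ is a smooth (in fact analytic plus the $C^\infty$ map $v_m$, hence $C^2$ and more) map from $\mathcal{D}^W_{\sigma,s}(\rho)$ into $V\oplus W \subseteq X_{\sigma,s}$, with derivatives bounded by \eqref{stimev}; here I take $\rho:=\min_{\sigma\in[0,\bar\sigma]}\rho(m,\sigma,s)$, which is finite and positive by the remark following Proposition \ref{soluzbif} (uniformity of the radius on finite $\sigma$-intervals). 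The cubing map $u\mapsto u^3$ is a bounded trilinear map on the algebra $X_{\sigma,s}$ by \eqref{algebra}, hence analytic with explicit derivatives $D f(u)[h]=3u^2h$, $D^2 f(u)[h,k]=6u h k$, $D^3 f(u)\equiv 6\,(\cdot)$, all controlled in operator norm by $C(s)\|u\|_{\sigma,s}^{2}$, $C(s)\|u\|_{\sigma,s}$, $C(s)$ respectively.

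First I would fix $\rho$ as above and set $C_0:=\sup_{w\in\mathcal{D}^W_{\sigma,s}(\rho)}\|v(w)+w\|_{\sigma,s}\le R+\rho$ using \eqref{stimev} with $j=0$; note $v(w)\in V\cap X_{\sigma,s+2}\hookrightarrow X_{\sigma,s}$, so the inclusion is harmless. Then I would write $\Gamma=f\circ g$ with $g(w):=v(w)+w$ and apply the chain rule: $D\Gamma(w)=Df(g(w))\circ Dg(w)$, $D^2\Gamma(w)=D^2f(g(w))[Dg(w)\cdot,Dg(w)\cdot]+Df(g(w))[D^2g(w)\cdot\,]$, and similarly for $D^3\Gamma$, which involves $D^3 f$, $D^2 f$, $Df$ composed with $Dg,D^2g,D^3g$. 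Since $g$ has derivatives bounded by \eqref{stimev} (with $Dg(w)=Dv_m(w)+\mathrm{Id}$, so $\|Dg(w)\|\le R+1$, and $D^jg(w)=D^jv_m(w)$ for $j\ge2$), each term is a finite product of factors each bounded uniformly on $\mathcal{D}^W_{\sigma,s}(\rho)$, uniformly in $\sigma\in[0,\bar\sigma]$ because all the constants — $C(s)$ from \eqref{algebra}, $R$ from \eqref{stimev}, $\rho$ — are $\sigma$-independent. Collecting, one gets a single constant $R'=R'(\bar\sigma,s)$ bounding $\max_{j=0,1,2,3}\sup_{w}\|D^j\Gamma(w)\|_{\sigma,s}$; that $\Gamma\in C^2$ (indeed $C^3$, but $C^2$ suffices for the statement) follows since it is a composition of a $C^\infty$ map $v_m$ with a real-analytic polynomial map, and the displayed uniform bounds give continuity of the derivatives.

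The only genuinely delicate point — and the one I would be most careful about — is the bookkeeping of the target regularity indices: $v_m$ lands in $X_{\sigma,s+2}$, not merely $X_{\sigma,s}$, so when I estimate $\|D^j\Gamma(w)[h^j]\|_{\sigma,s}$ I should make sure I am only ever using the weaker $X_{\sigma,s}$-norm on $v(w)$ and its derivatives (which is legitimate, since $\|\cdot\|_{\sigma,s}\le\|\cdot\|_{\sigma,s+2}$ and the latter is what \eqref{stimev} controls) and that the algebra estimate \eqref{algebra} is applied at the same index $s$ for all three factors. There is no loss of derivatives here because $f$ is a polynomial nonlinearity and we are not inverting any operator; the smoothing and small-divisor issues enter only later. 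I do not expect any real obstacle: the lemma is a soft consequence of \eqref{algebra}, Proposition \ref{soluzbif}, and the chain rule, and the proof is essentially the verification that the constants can be taken uniform in $\sigma\in[0,\bar\sigma]$, which is exactly what the remark after Proposition \ref{soluzbif} was set up to provide.
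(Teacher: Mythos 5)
Your proof is correct and follows essentially the same route as the paper's: the paper's own argument is a one-line appeal to the algebra estimate \eqref{algebra}, the smoothness of $v_m$ with bounded derivatives from Proposition \ref{soluzbif}, and the uniformity of $\rho$ and $R'$ in $\sigma\in[0,\bar\sigma]$. You have simply spelled out the chain-rule bookkeeping that the paper leaves implicit.
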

\begin{proof}
This follows directly by algebra properties \eqref{algebra} since $\Gamma$ is a polynomial in $w$ and $v(w)$, moreover $v(\cdot)$ is smooth with bounded derivatives in $\mathcal{D}^W_{\sigma,s}(\rho)$ by Proposition \ref{soluzbif}.\\
We remark that $R'$ is uniform in $[0,\bar{\sigma}]$.
\end{proof}
\begin{lemma}[SMOOTHING ESTIMATES]
\begin{equation}\label{smooth}
\forall w \in W^{(n)\perp}\cap X_{\sigma,s},\, \forall 0\leq \sigma'\leq\sigma,\, \|w\|_{\sigma',s}\leq \exp \left(-L_n(\sigma-\sigma')\right)\|w\|_{\sigma,s}.
\end{equation}
\end{lemma}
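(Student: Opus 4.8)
The plan is to prove the estimate directly from the Fourier series representation of $w$, using only the exponential weight in the norm $\|\cdot\|_{\sigma,s}$ together with the frequency localization encoded in the definition \eqref{rangesplitting} of $W^{(n)\perp}$.

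First I would expand $w \in W^{(n)\perp}\cap X_{\sigma,s}$ according to Definition \ref{anal.sp}, writing $w(t,x)=\sum_{\ell\in\Z}\exp(i\ell t)w_\ell(x)$ with $w_{-\ell}=w_\ell\in\cH^2_x$; by \eqref{rangesplitting} the coefficients $w_\ell$ vanish unless $|\ell|>L_n$. Hence
\[
\|w\|_{\sigma',s}^2=\sum_{|\ell|>L_n}\exp(2\sigma'|\ell|)\langle\ell\rangle^{2s}\|w_\ell\|_{\cH^2_x}^2 .
\]
The key step is the pointwise-in-$\ell$ bound: for every $\ell$ with $|\ell|>L_n$ and every $0\le\sigma'\le\sigma$, since $\sigma-\sigma'\ge 0$ and $|\ell|\ge L_n$,
\[
\exp(2\sigma'|\ell|)=\exp(2\sigma|\ell|)\exp\!\big(-2(\sigma-\sigma')|\ell|\big)\le\exp(2\sigma|\ell|)\exp\!\big(-2L_n(\sigma-\sigma')\big).
\]
Plugging this into the series for $\|w\|_{\sigma',s}^2$ and factoring out the constant $\exp(-2L_n(\sigma-\sigma'))$ gives
\[
\|w\|_{\sigma',s}^2\le\exp\!\big(-2L_n(\sigma-\sigma')\big)\sum_{|\ell|>L_n}\exp(2\sigma|\ell|)\langle\ell\rangle^{2s}\|w_\ell\|_{\cH^2_x}^2=\exp\!\big(-2L_n(\sigma-\sigma')\big)\|w\|_{\sigma,s}^2 ,
\]
and taking square roots yields \eqref{smooth}.

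There is essentially no obstacle here: the statement is an immediate consequence of the exponential weights in $\|\cdot\|_{\sigma,s}$ and of the fact that $W^{(n)\perp}$ only carries frequencies $|\ell|>L_n$. The only minor point is that the sum runs over $\Z$ with the reality constraint $w_{-\ell}=w_\ell$ rather than over $\N$, but since every bound above depends only on $|\ell|$ this is irrelevant; the same computation applies verbatim if one prefers to work with the $\sum_{\ell\in\N}\cos(\ell t)w_\ell(x)$ representation.
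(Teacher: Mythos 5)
Your proof is correct and is essentially the same computation as the paper's: expand $w$ in Fourier modes supported on $|\ell|>L_n$, rewrite $\exp(2\sigma'|\ell|)=\exp(2\sigma|\ell|)\exp(-2(\sigma-\sigma')|\ell|)$, bound the decaying exponential by its value at $|\ell|=L_n$, and factor it out of the series. Nothing is missing.
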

\begin{proof}
In fact, a direct computation shows:
\begin{align*}
\|w\|^2_{\sigma',s}=\sum\limits_{|\ell|>L_n}\exp(2\sigma' |\ell|)\langle \ell \rangle^{2s}\|w_\ell\|^2_{\cH^2_x}=\sum\limits_{|\ell|>L_n}\exp(-2(\sigma-\sigma') |\ell|)\exp(2\sigma\ell)\langle \ell \rangle^{2s}\|w_\ell\|^2_{\cH^2_x}\\
\leq \exp(-2(\sigma-\sigma')L_n)\sum\limits_{|\ell|>L_n}\exp(2\sigma\ell)\langle \ell \rangle^{2s}\|w_\ell\|^2_{\cH^2_x}=\exp(-2(\sigma-\sigma')L_n)\|w\|^2_{\sigma,s}.
\end{align*}
\end{proof}
The most important property of Nash-Moser scheme is the invertibility of linearized operator.
\begin{proposition}[INVERTIBILITY OF THE LINEARIZED OPERATOR]\label{inversolinearizzato}
Let $\gamma \in \left]0,\frac{1}{6}\right[,\, \tau \in]1,2[$, $\bar{\sigma}>0$, $s>\frac{1}{2}$. Given $\sigma \in [0,\bar{\sigma}]$, assume $\|w\|_{\sigma,s+\frac{2\tau(\tau-1)}{2-\tau}}<\rho$. Then there exists $\varepsilon_0:=\varepsilon_0(\bar{\sigma},s, m, \gamma, \tau)>0 $ small enough, such that for any $ \varepsilon$ in
\begin{equation}\label{cantorAn}
\begin{gathered}
G_n(w):=\bigg\lbrace \varepsilon \in [0,\varepsilon_0]\, :\, \left| \omega(\varepsilon)\ell -\omega_j-\varepsilon \frac{M(w)}{2\omega_j}\right|> \frac{\gamma}{(\ell+\omega_j)^\tau},\\
|\omega(\varepsilon)\ell-\omega_j|> \frac{\gamma}{(\ell+\omega_j)^\tau},\, \forall \ell,j\in \N\,:\, \frac{1}{3\varepsilon}\leq\ell\leq L_n,\,\omega_j\leq 2L_n,\, \ell\neq \omega_j\,  \bigg\rbrace,
\end{gathered}
\end{equation}
the linear operator
\begin{equation}\label{linop}
\mathfrak{L}_n(\varepsilon,w):=\mathcal{L}_{\omega(\varepsilon)}-\varepsilon P_n\Pi_W D_w\Gamma(w)
\end{equation}
is invertible on $W^{(n)}$ and, setting  
$ K := 8\cdot 9^2 $, 
\begin{equation}\label{stimalinearizzato}
\left\|\mathfrak{L}_n(\varepsilon,w)^{-1}[h]\right\|_{\sigma,s}\leq \frac{K}{\gamma}L_n^{\tau-1}\|h\|_{\sigma,s} \, ,\,\,\, \forall h \in W^{(n)}\cap X_{\sigma,s} \, .
\end{equation}
\end{proposition}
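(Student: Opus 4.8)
The strategy is to decompose the operator $\mathfrak{L}_n(\varepsilon,w)$ with respect to the time--Fourier basis $\{\exp(\ii\ell t)\}_{\ell\in\Z}$ only, treating each Fourier coefficient as a function of $x$, and to isolate the ``dangerous'' part of the operator from a part that can be absorbed by a Neumann series. First I would write the nonlinear term as $3\varepsilon P_n\Pi_W\big(b(t,x)\,\cdot\,\big)$ where $b(t,x)=3(v(w)(t,x)+w(t,x))^2$, and split $b=b_0(x)+(b-b_0)$, with $b_0$ the time--average. This produces the decomposition $\mathfrak{L}_n = D - R$, where $D=\{D_\ell\}_{\ell}$ is block-diagonal in the time modes, $D_\ell=\omega^2\ell^2+\Delta_{\mathbb{S}^3}-\Id-\varepsilon b_0(x)$, and $R$ collects the off-diagonal-in-time contributions coming from $b-b_0$ together with the projector $P_n\Pi_W$. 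The core of the proof is (i) inverting $D$ with a controlled loss of $\tau-1$ derivatives, and (ii) showing $\|D^{-1}R\|<\tfrac12$ in operator norm on $W^{(n)}\cap X_{\sigma,s}$ so that $\mathfrak{L}_n=D(\Id-D^{-1}R)$ is invertible by Neumann series with the stated bound $\tfrac K\gamma L_n^{\tau-1}$.

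For step (i), I would invoke the Sturm--Liouville analysis (Lemma \ref{sturm}) to diagonalize $-\Delta_{\mathbb{S}^3}+\Id+\varepsilon b_0(x)$ on $\cH^0_x$: for $\varepsilon$ small its eigenvalues $\mu_j(\varepsilon)$ are perturbations of $\omega_j^2$, with $\mu_j(\varepsilon)=\omega_j^2+\varepsilon\overline{b_0}+O(\varepsilon^2/\omega_j)$ or similar, where $\overline{b_0}$ is the $x$-average. Hence $D_\ell$ acting on the $j$-th (perturbed) eigenfunction has eigenvalue $\omega^2\ell^2-\mu_j(\varepsilon)$, which factors essentially as $(\omega\ell-\omega_j-\varepsilon\tfrac{\overline{b_0}}{2\omega_j})(\omega\ell+\omega_j+\dots)$ up to harmless corrections. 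The first-order Melnikov conditions defining $G_n(w)$ — note that $M(w)$ in \eqref{cantorAn} is exactly $\int\!\!\int(\partial_uf)(w+v(w))=\overline{b_0}$ up to normalization — then give $|\omega^2\ell^2-\mu_j(\varepsilon)|\geq c\,\gamma\,(\ell+\omega_j)^{-\tau}(\ell+\omega_j)\gtrsim \gamma\langle\ell\rangle^{1-\tau}$ for the relevant range $\tfrac1{3\varepsilon}\le\ell\le L_n$, $\omega_j\le 2L_n$. For the remaining sites with small $\ell$ (below $\tfrac1{3\varepsilon}$) or with $\omega_j$ much larger than $\ell$, one checks directly that $|\omega^2\ell^2-\mu_j|$ is bounded below by an absolute constant (the perturbation $\varepsilon b_0$ being much smaller than the spectral gap there), so no small-divisor issue arises and the truncation in $G_n(w)$ is legitimate. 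This yields $\|D^{-1}h\|_{\sigma,s}\lesssim \gamma^{-1}L_n^{\tau-1}\|h\|_{\sigma,s}$.

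Step (ii) is where the main obstacle lies. The off-diagonal operator $R$ does not decay in time-frequency by itself, so one cannot hope for $\|R\|$ small; instead one must exploit that $D^{-1}$ contracts except near the singular sites, and that near a singular site $(\ell,j)$ a \emph{neighbouring} site $(\ell',j')$ cannot also be singular — this is the content of Lemma \ref{smalldiv}, valid precisely for $\tau\in]1,2[$: the product of two small divisors at nearby sites is bounded below by a constant. Combined with the off-diagonal decay of the matrix of multiplication by $b_0$ (Lemma \ref{elements}), which in turn rests on the Sobolev embedding $\cH^r_x\hookrightarrow H^{r+1+\delta}(\mathbb{S}^1)$ of Lemma \ref{sob} applied to $b-b_0\in\HtHx$-type spaces, one obtains in Lemmas \ref{r1}, \ref{r2} the bound $\|D^{-1}R\|_{\sigma,s}\le \tfrac12$ provided $\varepsilon$ is small enough (the size of $R$ carries a factor $\varepsilon$ times $\|b\|\lesssim\|w\|^2+1$, while $D^{-1}$ costs $\gamma^{-1}L_n^{\tau-1}$, and a careful bookkeeping of where the $L_n^{\tau-1}$ and the decay compete gives the required smallness, using $s+\tfrac{2\tau(\tau-1)}{2-\tau}$ regularity on $w$ exactly to close this). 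Then $\mathfrak{L}_n^{-1}=(\Id-D^{-1}R)^{-1}D^{-1}$ has norm $\le 2\|D^{-1}\|\le \tfrac K\gamma L_n^{\tau-1}$ with $K=8\cdot 9^2$, after tracking the explicit constants from Lemma \ref{sturm} and the $(\ell+\omega_j)^\tau$ versus $\langle\ell\rangle^\tau$ comparison. The delicate point throughout is that all estimates must be uniform in $n$ and in $\sigma\in[0,\bar\sigma]$, so one never uses the analytic norm to gain decay — only the algebra and smoothing structure — and the loss $\tau-1<1$ stays strictly below one derivative, which is what ultimately makes the Nash--Moser scheme of Section \ref{sec.w} converge.
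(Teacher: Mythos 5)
Your decomposition into a time-diagonal part $D$ and a remainder, and your reliance on the Sturm--Liouville diagonalization of $-\Delta_{\mathbb{S}^3}+\Id+\varepsilon b_0$, the Melnikov conditions, and the ``two nearby singular sites cannot both be bad'' estimate (Lemma \ref{smalldiv}) are all the right ingredients, and match the paper's strategy in spirit. However, the way you propose to assemble them --- writing $\mathfrak{L}_n = D(\Id - D^{-1}R)$ and showing $\|D^{-1}R\|_{\sigma,s}<\tfrac12$ --- would not close. In your one-sided factorization the off-diagonal term at position $(\ell,k)$ is weighted only by $\alpha_\ell^{-1}$ (a \emph{single} small divisor, of size up to $\gamma^{-1}\ell^{\tau-1}$), and for $|\ell-k|$ small the off-diagonal decay of $b_{\ell-k}$ gives you nothing to compensate with; the resulting bound grows like $L_n^{\tau-1}$ and is not uniform in $n$. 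In other words, $D^{-1}R$ is similar to a small operator but is not itself small on $X_{\sigma,s}$, and the Neumann series in the form you state does not converge with a uniform constant.

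The paper avoids this by a \emph{symmetric} factorization
$$
\mathfrak{L}_n = |D|^{\frac12}\bigl(U - \varepsilon\,\mathcal{R}_1 - \varepsilon\,\mathcal{R}_2\bigr)|D|^{\frac12},
\qquad U = \operatorname{sgn}(D),\quad
\mathcal{R}_i = |D|^{-\frac12}\mathcal{M}_i|D|^{-\frac12},
$$
with $\mathcal{M}_1 = P_n\Pi_W(\tilde b\,\cdot)$ the off-diagonal multiplication and $\mathcal{M}_2 = P_n\Pi_W\bigl(b\,\partial_w v(w)[\cdot]\bigr)$ the term coming from differentiating the composed nonlinearity $\Gamma(w)=f(v(w)+w)$ (which you do not isolate). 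It is precisely because $|D|^{-1/2}$ is placed on \emph{both} sides of the multiplication that the matrix entry at $(\ell,k)$ carries the weight $(\alpha_\ell\alpha_k)^{-1/2}$, and only in this form does Lemma \ref{smalldiv} bite: the estimate $(\alpha_\ell\alpha_k)^{-1}\lesssim \gamma^{-2}\varepsilon^{-(\tau-1)}|k-\ell|^{2(\tau-1)/\beta}$ for $k\neq\ell$ converts the pair of small divisors into a controlled power of $|k-\ell|$, which the extra time-regularity $s+\tfrac{2\tau(\tau-1)}{2-\tau}$ of $w$ (and hence of $b$) is exactly calibrated to absorb through the algebra estimate. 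One then inverts $U-\varepsilon\mathcal{R}_1-\varepsilon\mathcal{R}_2$ by Neumann series on the \emph{auxiliary} space $X_{\sigma,s+\frac{\tau-1}{2}}$ (not on $X_{\sigma,s}$), and the loss of $\tau-1$ derivatives, distributed as $(\tau-1)/2$ on each $|D|^{-1/2}$, produces the stated $L_n^{\tau-1}$ after restricting to $W^{(n)}$. Without the symmetric split you only ever see one small divisor at a time, Lemma \ref{smalldiv} cannot be invoked, and the iteration does not converge uniformly in $n$.
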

The proof of this Proposition will be the whole content of Section \ref{invlin}.\\
We remark that the estimate \eqref{stimalinearizzato} holding for any $ n \in \N$ is essentially equivalent to the fact that the linearized operator $\mathfrak{L}(\varepsilon, w)^{-1}$ loses $\tau-1$ Sobolev derivatives in time.

In the following we will often omit to write the dependence of quantities on $ m \in \N $.

\subsection{Nash-Moser Scheme}
Let $\sigma_0:=\bar{\sigma}>0,\, \theta>0$ such that $\frac{\pi^2\theta}{6}<\frac{\bar{\sigma}}{2}$. Let $\sigma_{n+1}:=\sigma_n-\frac{\theta}{1+n^2}$, so that $\sigma_\infty:=\lim_{n\to \infty}\sigma_n>\frac{\bar{\sigma}}{2}$.
\begin{proposition}\label{mainpropw}There exist
$ L_0:=L_0(\gamma,\tau,\bar{\sigma}, s)>0$ and $ \varepsilon_0:=\varepsilon_0(\gamma,\tau, \bar{\sigma},s,L_0)>0$ such that for any $ n\geq 0$ there exists a solution $w_n:=w_n(\varepsilon)\in W^{(n)}$ of the equation
\begin{equation}\label{wenne}
\mathcal{L}_\omega w=\varepsilon P_n\Pi_W\Gamma(w)
\end{equation}
defined inductively for $\varepsilon\in \mathcal{G}_n\subseteq \mathcal{G}_{n-1}\subseteq \dots \subseteq \mathcal{G}_0:=[0,\varepsilon_0]$, where
\begin{equation}\label{Adritto}
\mathcal{G}_n:=\mathcal{G}_{n-1}\cap G_n(w_{n-1})\neq \emptyset.
\end{equation}
Furthermore $w_n \in C^1\left(\mathcal{G}_n, W^{(n)}\cap X_{\sigma_n,s}\right)$ satisfies for some $K_1,\,K_1'>0$, not depending on $n,\varepsilon,\gamma$
\begin{equation}\label{estiwn}
\|w_n(\varepsilon)\|_{\sigma_n,s}\leq K_1\frac{\varepsilon}{\gamma},\quad \quad \|\partial_\varepsilon w_n(\varepsilon)\|_{\sigma_n,s}\leq K_1',\,\, \forall \varepsilon \in \mathcal{G}_n.
\end{equation}
Moreover $w_n=\sum\limits_{i=0}^n h_i$, with $h_i \in C^1\left(\mathcal{G}_n,W^{(i)}\cap X_{\sigma_i,s}\right)$ satisfying for $\chi=\frac{3}{2}$, $\bar{\chi} \in ]1,\chi[ $ and some $ \bar{C}:=\bar{C}(\tau,\bar{\sigma},s,\bar{\chi})>0$ 
\begin{equation}\label{stimacca}
\|h_i\|_{\sigma_i,s}\leq \frac{\varepsilon}{\gamma}\exp(-\chi^i),\quad \quad\left\| \partial_\varepsilon h_i(\varepsilon)\right\|_{\sigma_i,s}\leq \bar{C} \exp(-\bar{\chi}^i),\quad \forall i=0,\dots, n, \,\,\forall \varepsilon \in \mathcal{G}_n.
\end{equation}
In addition for any $n \in \N$, there exists $\tilde{w}_n(\varepsilon)\in C^1\left([0,\varepsilon_0],W^{(n)}\cap X_{\sigma_n,s}\right)$ such that:
\begin{enumerate}
\item\label{tilde1}{$\tilde{w}_n(\varepsilon)=w_n(\varepsilon),$ for any $  \varepsilon \in \bigcap\limits_{i=0}^n \tilde{\mathcal{G}}_i$, where $\tilde{\mathcal{G}}_i$ are defined for some $\nu>0$ as
\begin{equation}\label{Atildati}
\tilde{\mathcal{G}}_0:=[0,\varepsilon_0],\quad
\tilde{\mathcal{G}}_i:=\left\lbrace \varepsilon\in \mathcal{G}_i\,:\, dist(\varepsilon,\partial \mathcal{G}_i)\geq \frac{2\nu}{L_i^3}\right\rbrace,\,\, \forall i\geq 1.
\end{equation}}
\item\label{tilde2}{$\|\tilde{w}_n(\varepsilon)\|_{\sigma_n,s}\leq K_2\frac{\varepsilon}{\gamma},  \quad \|\partial_\varepsilon \tilde{w}_n(\varepsilon)\|_{\sigma_n,s}\leq K_2'\nu^{-1}$, for any $ \varepsilon \in [0,\varepsilon_0]$, for some constants $K_2,\, K_2'>0$ which do not depend on $n,\varepsilon, \gamma$.}
\end{enumerate}
\end{proposition}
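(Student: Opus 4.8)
The plan is to construct the sequence $w_n$ by a Newton-type iteration, building $w_{n+1} = w_n + h_{n+1}$ at each step, and prove inductively the quantitative estimates \eqref{estiwn}–\eqref{stimacca}. First I would set up the iteration: suppose $w_n \in W^{(n)}$ solves \eqref{wenne} on $\mathcal{G}_n$. Writing $w_{n+1} = w_n + h$ with $h \in W^{(n+1)}$, projecting the equation \eqref{wenne} at level $n+1$ and Taylor-expanding $\Gamma$ around $w_n$, one finds that $h$ must solve
\begin{equation*}
\mathfrak{L}_{n+1}(\varepsilon,w_n)[h] = \varepsilon\, R_n + \varepsilon P_{n+1}\Pi_W\big(\Gamma(w_n+h)-\Gamma(w_n)-D\Gamma(w_n)[h]\big),
\end{equation*}
where $R_n := P_{n+1}\Pi_W\Gamma(w_n) - \mathcal{L}_\omega^{-1}$-consistency term; more precisely the residual $R_n = (P_{n+1}-P_n)\Pi_W\Gamma(w_n)$ coming from the fact that $w_n$ only solves the truncated equation at level $n$. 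Using that $w_n$ depends only on frequencies $\leq L_n$ and the smoothing estimate \eqref{smooth}, $R_n$ is exponentially small: $\|R_n\|_{\sigma_{n+1},s} \lesssim \exp(-L_n(\sigma_n-\sigma_{n+1}))$. Then I would invert $\mathfrak{L}_{n+1}(\varepsilon,w_n)$ via Proposition \ref{inversolinearizzato} — which is legitimate provided $\varepsilon \in G_{n+1}(w_n)$, hence the definition \eqref{Adritto} of $\mathcal{G}_{n+1}$, and provided the a priori bound $\|w_n\|_{\sigma_n,s+\frac{2\tau(\tau-1)}{2-\tau}} < \rho$ holds, which follows from \eqref{estiwn} and the choice of $S$ in \eqref{rho} once $\varepsilon_0$ is small — and solve for $h$ by the contraction mapping theorem in a ball of $W^{(n+1)}\cap X_{\sigma_{n+1},s}$ of radius $\sim \frac{\varepsilon}{\gamma}\exp(-\chi^{n+1})$. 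The quadratic gain from the remainder $\Gamma(w_n+h)-\Gamma(w_n)-D\Gamma(w_n)[h] = O(\|h\|^2)$ (Lemma \ref{gamma}) against the loss $L_{n+1}^{\tau-1}$ from \eqref{stimalinearizzato} and the loss $\exp(L_{n+1}(\sigma_n-\sigma_{n+1}))$ from undoing the smoothing is exactly the standard Nash-Moser balance: with $\sigma_n-\sigma_{n+1}=\theta/(1+n^2)$, $L_n = L_0 2^n$, one checks $\exp(-L_n(\sigma_n-\sigma_{n+1}))\cdot L_{n+1}^{\tau-1}$ and the squared previous error $\exp(-2\chi^n)$ both beat $\exp(-\chi^{n+1})$ for $\chi = 3/2$, once $L_0$ is large and $\varepsilon_0$ small. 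Summing $w_n = \sum_{i\le n} h_i$ and using $\sum(\sigma_i - \sigma_{i+1}) = \frac{\pi^2\theta}{6} < \frac{\bar\sigma}{2}$ gives convergence in $X_{\sigma_\infty,s}$ with $\sigma_\infty > \bar\sigma/2$, and the bounds \eqref{estiwn} follow by summing the geometric-type series in \eqref{stimacca}, with $K_1,K_1'$ absolute.

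Next I would handle the $C^1$ dependence on $\varepsilon$ and the nonemptiness $\mathcal{G}_n \neq \emptyset$. For $C^1$-regularity, differentiate the fixed-point equation for $h_{n+1}$ in $\varepsilon$: $\partial_\varepsilon h_{n+1}$ solves a linear equation governed again by $\mathfrak{L}_{n+1}(\varepsilon,w_n)^{-1}$, with forcing terms involving $\partial_\varepsilon w_n$, $\partial_\varepsilon R_n$, and derivatives of $\Gamma$; the key point is that the inverse still only loses $L_{n+1}^{\tau-1}$, but now there is no quadratic gain, so $\partial_\varepsilon h_i$ decays only like $\exp(-\bar\chi^i)$ with $\bar\chi < \chi$ — this is why the two exponents appear in \eqref{stimacca}. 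One must be slightly careful that $G_{n+1}(w_n)$ is an open set (it is, being defined by strict inequalities) so that $\mathcal{G}_{n+1}$ is open and differentiation makes sense on its interior; the curve $w_{n+1}$ extends continuously to the closure. For $\mathcal{G}_n \neq \emptyset$: this requires showing the measure of the removed set at each step is summable and strictly less than $\varepsilon_0$; since $\mathcal{G}_{n+1} = \mathcal{G}_n \cap G_{n+1}(w_n)$ and the complement of $G_{n+1}(w_n)$ in $[0,\varepsilon_0]$ is a union over $\ell \le L_{n+1}$, $\omega_j \le 2L_{n+1}$ of intervals of length $\lesssim \gamma\langle\ell\rangle^{-\tau}/|\ell'|$ — more precisely each resonant condition, viewed as a function of $\varepsilon$ via $\omega(\varepsilon) = \sqrt{1+\varepsilon}$, has derivative bounded below (the restriction $\ell \geq 1/(3\varepsilon)$ is exactly what makes $\partial_\varepsilon(\omega\ell - \omega_j - \varepsilon M/(2\omega_j))$ nonvanishing), so each bad set has measure $\lesssim \gamma L_{n+1}^{2-\tau}\varepsilon_0$ or better; summing over $n$ and using $\tau < 2$ together with smallness of $\gamma$ and $\varepsilon_0$ keeps the total removed measure below $\varepsilon_0$. (The sharp asymptotic-full-measure statement \eqref{fullmeas} is deferred to Proposition \ref{measure}; here one only needs nonemptiness.)

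Finally I would construct the modified curves $\tilde w_n$ satisfying properties \ref{tilde1}–\ref{tilde2}. The issue is that $w_n$ is a priori defined only on $\mathcal{G}_n$, which is not an interval, whereas for the limiting argument one wants curves defined on all of $[0,\varepsilon_0]$ with uniform $C^1$ bounds. The standard device is to extend each increment: one replaces $h_i$ by $\tilde h_i := \psi_i(\varepsilon)\, h_i^{\mathrm{ext}}(\varepsilon)$ where $h_i^{\mathrm{ext}}$ is a $C^1$ extension of $h_i$ from $\mathcal{G}_i$ to $[0,\varepsilon_0]$ (possible because $h_i$ is $C^1$ on the closed set $\mathcal{G}_i$ with controlled derivative, e.g. by a Whitney-type extension, or simply a Lipschitz extension smoothed out) and $\psi_i$ is a cutoff equal to $1$ on $\tilde{\mathcal{G}}_i$ and supported in a $\frac{2\nu}{L_i^3}$-neighborhood of $\mathcal{G}_i$, with $\|\psi_i'\|_\infty \lesssim L_i^3/\nu$. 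Setting $\tilde w_n := \sum_{i\le n} \tilde h_i$: on $\bigcap_{i\le n}\tilde{\mathcal{G}}_i$ all cutoffs are $1$ and all extensions agree with the originals, giving \ref{tilde1}; and the derivative estimate $\|\partial_\varepsilon \tilde h_i\|_{\sigma_i,s} \lesssim \bar C\exp(-\bar\chi^i) + (L_i^3/\nu)\cdot\frac{\varepsilon}{\gamma}\exp(-\chi^i)$ is still summable in $i$ because $\exp(-\chi^i)$ (with $\chi = 3/2$) beats any polynomial $L_i^3 = L_0^3 2^{3i}$, yielding the $\nu^{-1}$-bound in \ref{tilde2}; the $X_{\sigma_i,s}$-norm bound is unchanged up to a constant since $|\psi_i| \le 1$, giving the $K_2\frac{\varepsilon}{\gamma}$-bound. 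The main obstacle in the whole proof is the linearized inversion Proposition \ref{inversolinearizzato} — but that is the content of Section \ref{invlin} and may be assumed here; within the present proof the delicate point is rather the bookkeeping of the two competing exponential rates ($\chi$ for the solution, $\bar\chi$ for its $\varepsilon$-derivative) against the polynomial losses $L_n^{\tau-1}$ and $L_n^3$, and verifying that the choices $\chi = 3/2$, $\bar\chi \in (1,3/2)$, $L_0$ large, $\varepsilon_0$ small close the induction consistently at every step.
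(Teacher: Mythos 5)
Your plan follows essentially the same route as the paper: construct $w_{n+1}=w_n+h_{n+1}$ by a Newton-type step, write the equation for $h_{n+1}$ in the form $\mathfrak{L}_{n+1}(\varepsilon,w_n)[h]=\varepsilon\big(r_n+\mathcal{R}_n(h)\big)$ with $r_n=(P_{n+1}-P_n)\Pi_W\Gamma(w_n)$, control $r_n$ by the smoothing estimate \eqref{smooth}, invert the linearized operator via Proposition \ref{inversolinearizzato} at the price $L_{n+1}^{\tau-1}$, close the contraction with the superexponential ansatz $\|h_{n+1}\|\lesssim \frac{\varepsilon}{\gamma}e^{-\chi^{n+1}}$, differentiate the fixed-point equation in $\varepsilon$ to get the slower rate $\bar\chi<\chi$ for $\partial_\varepsilon h_i$, and then glue the $w_n$ into globally defined $\tilde w_n$ using mollified cutoffs adapted to $\tilde{\mathcal G}_i$ with $\|\psi_i'\|_\infty\lesssim L_i^3/\nu$. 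This is precisely the structure of the paper's proof (Initialization, Lemma \ref{particolare}, Lemma \ref{lemmacontraction}, the $\partial_\varepsilon$-lemma, Lemma \ref{wit}).

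Two places where your plan is imprecise or off-track, though not fatally so. First, you claim the a priori bound $\|w_n\|_{\sigma_{n+1},\,s+\frac{2\tau(\tau-1)}{2-\tau}}<\rho$ needed to apply Proposition \ref{inversolinearizzato} ``follows from \eqref{estiwn} and the choice of $S$''; it does not follow directly, since \eqref{estiwn} is a bound at Sobolev index $s$, not $s+\beta$. One must trade analyticity for Sobolev regularity: each increment $h_i$ lives in $X_{\sigma_i,s}$ with $\sigma_i>\sigma_{n+1}$, and the estimate $\|u\|_{\sigma-\alpha,s+\beta}\leq\max\{1,e^{-\beta}(\beta/\alpha)^\beta\}\|u\|_{\sigma,s}$ converts the excess analyticity $\sigma_i-\sigma_{n+1}$ into the required Sobolev gain, after which the superexponential decay of the $h_i$ still makes the sum converge (this is exactly Lemma \ref{particolare}). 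Without this step the induction does not close. Second, you devote an argument to $\mathcal G_n\neq\emptyset$ via measure summability, but nonemptiness is trivial: at $\varepsilon=0$ the constraint ``$\ell\geq\frac{1}{3\varepsilon}$'' in the definition of $G_n(w)$ is vacuous, so $0\in G_n(w)$ for every $n$ and $w$, hence $0\in\mathcal G_n$ always. The nontrivial measure statement is deliberately deferred to Proposition \ref{measure}, which works with the sets $B_n$ built from the limit $\tilde w$ — precisely because the Whitney-extended $\tilde w_n$, defined on all of $[0,\varepsilon_0]$, make those sets independent of the iteration index and hence amenable to a measure estimate.
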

The sequence $w_n$ will be constructed inductively.

\begin{lemma}[Initialization]
Let $L_0 \in \N$, then there exists $ \varepsilon_0:=\varepsilon_0(L_0)>0$ small enough such that for any $ \varepsilon \in [0,\varepsilon_0]$ the equation \eqref{wenne} for $n=0$,
\begin{equation}\label{w0}
\mathcal{L}_\omega h_0=\varepsilon P_0\Pi_W \Gamma(h_0),
\end{equation}
 admits a solution $h_0(\varepsilon)$ satisfying, for some $K_1,\,K_1'>0$, 
 $$
 \|h_0(\varepsilon)\|_{\sigma_0,s}\leq K_1 \frac{\varepsilon}{\gamma},\quad \|\partial_\varepsilon h_0(\varepsilon)\|_{\sigma_0,s}\leq K_1'.
$$
\end{lemma}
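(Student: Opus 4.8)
The plan is to solve \eqref{w0} by a straightforward contraction mapping argument, since for $n=0$ the operator $\mathcal{L}_\omega$ restricted to $W^{(0)}$ has only finitely many time-frequency blocks (indices $0\le \ell\le L_0$), so there are \emph{no small divisors} of the dangerous type: the eigenvalues $\omega(\varepsilon)^2\ell^2-\omega_j^2$ of $\mathcal{L}_\omega$ on $W^{(0)}$ are uniformly bounded away from zero once $\varepsilon_0$ is small (the only constraint is $\ell\neq\omega_j$, and with $\ell\le L_0$ and $\omega=\sqrt{1+\varepsilon}$ close to $1$, one has $|\omega^2\ell^2-\omega_j^2|\ge c(L_0)>0$). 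Hence $\mathcal{L}_\omega^{-1}: W^{(0)}\cap X_{\sigma_0,s}\to W^{(0)}\cap X_{\sigma_0,s}$ is a bounded operator with norm $\le C(L_0)$ (and its $\varepsilon$-derivative is likewise bounded, since the eigenvalues depend smoothly on $\varepsilon$ and stay bounded away from $0$).

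The concrete steps: first I would record that, for $\varepsilon_0=\varepsilon_0(L_0)$ small, $\mathcal{L}_\omega$ is invertible on $W^{(0)}$ with $\|\mathcal{L}_\omega^{-1}\|_{\mathcal{B}(X_{\sigma_0,s})}\le C(L_0)$ and $\|\partial_\varepsilon\mathcal{L}_\omega^{-1}\|\le C(L_0)$. Then \eqref{w0} is equivalent to the fixed point equation $h_0=\mathcal{T}(h_0):=\varepsilon\,\mathcal{L}_\omega^{-1}P_0\Pi_W\Gamma(h_0)$. Using the regularity estimate \eqref{R'} of Lemma \ref{gamma} (valid on the ball $\mathcal{D}^W_{\sigma_0,s}(\rho)$) and the algebra property \eqref{algebra}, one checks that $\mathcal{T}$ maps the ball $\{\|h\|_{\sigma_0,s}\le K_1\varepsilon/\gamma\}$ into itself and is a contraction there, provided $\varepsilon_0$ is small enough: indeed $\|\mathcal{T}(h)\|_{\sigma_0,s}\le \varepsilon\, C(L_0)\,R'$, so choosing $K_1:= 2 C(L_0) R'\gamma$ (or any constant dominating $C(L_0)R'\gamma$) gives self-mapping, and $\|\mathcal{T}(h)-\mathcal{T}(h')\|_{\sigma_0,s}\le \varepsilon\,C(L_0)\sup\|D\Gamma\|\,\|h-h'\|_{\sigma_0,s}\le \tfrac12\|h-h'\|_{\sigma_0,s}$ for $\varepsilon_0$ small. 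The Banach fixed point theorem then yields a unique solution $h_0(\varepsilon)$ in that ball, hence the bound $\|h_0(\varepsilon)\|_{\sigma_0,s}\le K_1\varepsilon/\gamma$.

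For the $C^1$ dependence on $\varepsilon$ and the bound on $\partial_\varepsilon h_0$: since $\Gamma\in C^2$ and $\varepsilon\mapsto\mathcal{L}_\omega^{-1}$ is $C^1$ with uniformly bounded derivative, the fixed point $h_0(\varepsilon)$ is $C^1$ by the uniform contraction principle (differentiating the fixed point identity, $\partial_\varepsilon h_0 = \mathcal{L}_\omega^{-1}P_0\Pi_W\Gamma(h_0) + \varepsilon(\partial_\varepsilon\mathcal{L}_\omega^{-1})P_0\Pi_W\Gamma(h_0) + \varepsilon\mathcal{L}_\omega^{-1}P_0\Pi_W D\Gamma(h_0)[\partial_\varepsilon h_0]$, and solving for $\partial_\varepsilon h_0$ using that $\mathrm{Id}-\varepsilon\mathcal{L}_\omega^{-1}P_0\Pi_W D\Gamma(h_0)$ is invertible with the Neumann series). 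This gives $\|\partial_\varepsilon h_0(\varepsilon)\|_{\sigma_0,s}\le K_1'$ for a constant $K_1'=K_1'(L_0)$ depending only on $C(L_0),R'$.

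I do not anticipate a genuine obstacle here — the whole point of the initialization step is that there are no small divisors yet, so it is elementary. The only mild care needed is to verify that $h_0$ stays inside $\mathcal{D}^W_{\sigma_0,s}(\rho)$ so that Lemma \ref{gamma} applies (guaranteed since $K_1\varepsilon_0/\gamma<\rho$ for $\varepsilon_0$ small), and to note that all constants $C(L_0),K_1,K_1'$ may depend on $L_0$ but this is harmless because $L_0$ will be fixed once and for all in Proposition \ref{mainpropw}; the constants $K_1,K_1'$ in the statement of Proposition \ref{mainpropw} are then these values. The genuinely hard analysis — the small-divisor estimates and the invertibility of $\mathfrak{L}_n$ with loss of derivatives — enters only at the inductive step $n\ge 1$, not in this lemma.
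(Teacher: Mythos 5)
Your proposal is correct and follows essentially the same route as the paper: observe that on $W^{(0)}$ the eigenvalues $\omega^2\ell^2-\omega_j^2$ of $\mathcal{L}_\omega$ are bounded away from zero once $\varepsilon_0(L_0)$ is small (the paper makes this precise by showing $|\omega^2\ell^2-\omega_j^2|\geq\tfrac12$ whenever $\tfrac{\varepsilon_0 L_0}{\omega+1}\leq\tfrac12$, so $\|\mathcal{L}_\omega^{-1}\|\leq 2$), then run the Banach contraction argument for $\mathcal{T}_\varepsilon^{(0)}(w)=\varepsilon\mathcal{L}_\omega^{-1}P_0\Pi_W\Gamma(w)$ using Lemma \ref{gamma}, and bound $\partial_\varepsilon h_0$ from the fixed-point identity. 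One small caveat: you suggest $K_1:=2C(L_0)R'\gamma$, but since the constants in Proposition \ref{mainpropw} are required to be independent of $\gamma$, you should instead note that $\|h_0\|_{\sigma_0,s}\leq 2\varepsilon R'\leq K_1\varepsilon/\gamma$ automatically holds with $K_1:=2R'$ because $\gamma<1$.
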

\begin{proof}
The first step is to prove that if $\frac{\varepsilon_0}{\omega+1}L_0\leq \frac{1}{2}$ then $|\omega^2\ell^2-\omega_j^2| \geq \frac{1}{2},\,\, \forall \ell\neq \omega_j$. Indeed $\forall 0\leq \ell \leq L_0$
$$|\omega^2\ell^2-\omega_j^2|=\underbrace{|\omega\ell+\omega_j|}_{\geq 1}|\ell-\omega_j +(\omega-1)\ell|\geq \underbrace{|\ell-\omega_j|}_{\geq 1}-\underbrace{|\omega-1|}_{=\frac{\varepsilon}{\omega+1}}\underbrace{|\ell|}_{\leq L_0}\geq 1-\frac{1}{2}.$$
It follows that $\mathcal{L}_\omega^{-1}$ is bounded in $W^{(0)}\cap X_{\sigma_0,s}$ and has norm $\leq 2$. In order to solve \eqref{w0} we show that, provided $|\varepsilon|\leq \varepsilon_0$ small enough, the map
$\mathcal{T}_\varepsilon^{(0)}(w):= \varepsilon \mathcal{L}_\omega^{-1}P_0\Pi_{W}\Gamma(w)$
 is a contraction on $\mathcal{D}_0:=\lbrace
w \in W^{(0)}\,:\, \|w\|_{\sigma_0,s}\leq \rho_0 \rbrace$, where $\rho_0=\frac{\rho}{2}$. Indeed by Lemma \ref{gamma}
\begin{itemize}
\item{$\left\|\mathcal{T}_\varepsilon^{(0)}(w)\right\|_{\sigma_0,s}\leq 2\varepsilon R'.$\\
Hence for $\varepsilon_0\leq \frac{\rho_0}{2 R'}$ one has that $\mathcal{T}_\varepsilon^{(0)}$ maps $\mathcal{D}_0$ into itself.}
\item{$\left\|D_w\mathcal{T}_\varepsilon^{(0)}(w)[h]\right\|_{\sigma_0,s}\leq 2\varepsilon R'\|h\|_{\sigma_0,s}.$\\
Thus, for $\varepsilon_0 \leq \frac{1}{4 R'}$ one has $\left\|D_w\mathcal{T}_{\varepsilon}^{(0)}(w)[h]\right\|_{\sigma_0,s}\leq\frac{1}{2}\|h\|_{\sigma_0,s}$, and so $\mathcal{T}_{\varepsilon}^{(0)}$ is a contraction.}
\end{itemize}
It follows that $\exists !\, h_0(\varepsilon)$ fixed point of $\mathcal{T}_{\varepsilon}^{(0)}$, moreover
$$\|h_0(\varepsilon)\|_{\sigma_0,s}=\|\mathcal{T}_{\varepsilon}^{(0)}(h_0(\varepsilon))\|_{\sigma_0,s}\leq 2 \varepsilon R'.$$
Using that $h_0(\varepsilon)$ is solution of \eqref{w0} we obtain for the derivative $
\|\partial_\varepsilon h_0(\varepsilon)\|_{\sigma_0,s}\leq 3 R'$.
\end{proof}
Suppose now that we already defined a solution $w_n\in W^{(n)}$ of the equation \eqref{wenne} satisfying the properties of Proposition \ref{mainpropw}. 
We want to define a solution $w_{n+1}=w_n+h_{n+1}$, with $h_{n+1}\in W^{(n+1)}$ of 
\begin{equation}\label{rangen+1}
\mathcal{L}_{\omega}(w_n+h_{n+1})-\varepsilon P_{n+1}\Pi_W(\Gamma(w_n+h_{n+1}))=0.
\end{equation}
Since by induction  $w_n(\varepsilon)\in W^{(n)}$ solves \eqref{wenne} for any $ \varepsilon \in \mathcal{G}_n$ we have
\begin{align*}
&\mathcal{L}_{\omega}(w_n+h)-\varepsilon P_{n+1}\Pi_W\Gamma(w_n+h)=\mathcal{L}_{\omega}h+\varepsilon P_{n}\Pi_W\Gamma(w_n)-\varepsilon P_{n+1}\Pi_W\Gamma(w_n+h)\\
&=\mathcal{L}_{\omega}h-\varepsilon P_{n+1}\Pi_W(\Gamma(w_n+h)-\Gamma(w_n))-\varepsilon (P_{n+1}-P_n)\Pi_W\Gamma(w_n)\\
&=\underbrace{\mathcal{L}_{\omega}h-\varepsilon P_{n+1}\Pi_WD_w\Gamma(w_n)[h]}_{=\mathfrak{L}_{n+1}(\varepsilon,w_n)[h]}-\varepsilon \mathcal{R}_n(h)-\varepsilon r_{n},
\end{align*}
where
\begin{equation}\label{resto1}
\mathcal{R}_n(h):= P_{n+1}\Pi_W\left(\Gamma(w_n+h)-\Gamma(w_n)-D_w\Gamma(w_n)[h]\right),
\end{equation}
\begin{equation}\label{resto2}
r_{n}:= P_{n+1}P_n^{\perp}\Pi_W\Gamma(w_n).
\end{equation}
Thus, solving \eqref{rangen+1} is equivalent to find a solution of
\begin{equation}\label{rangelin}
\mathfrak{L}_{n+1}(\varepsilon,w_n)[h]=\varepsilon (r_{n}+\mathcal{R}_n(h)),\quad h \in W^{(n+1)}.
\end{equation}
\begin{lemma}\label{particolare}
There exists $\varepsilon_0:=\varepsilon_0(\gamma,\tau,\bar{\sigma},s)>0$ small enough such that for any $ \varepsilon \in \mathcal{G}_{n+1}$ defined in \eqref{Adritto}, 
the operator $\mathfrak{L}_{n+1}(\varepsilon,w_n)$ is invertible on $W^{(n+1)}$ and \eqref{stimalinearizzato} holds at the $n+1$ step.
\end{lemma}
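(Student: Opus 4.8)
The plan is to derive the lemma as an essentially direct consequence of Proposition \ref{inversolinearizzato}, applied at index $n+1$ with the weight $w=w_n(\varepsilon)$, where $w_n$ is the solution of \eqref{wenne} produced inductively by Proposition \ref{mainpropw}. Indeed, the operator $\mathfrak{L}_{n+1}(\varepsilon,w_n)$ appearing in \eqref{rangelin} is exactly the operator $\mathfrak{L}_{n+1}(\varepsilon,w)$ of \eqref{linop} evaluated at $w=w_n$, so it is enough to verify, for $\varepsilon\in\mathcal{G}_{n+1}$, the two hypotheses of Proposition \ref{inversolinearizzato} read at level $n+1$: that $\varepsilon\in G_{n+1}(w_n)$, and that $\|w_n\|_{\sigma,\,s+\frac{2\tau(\tau-1)}{2-\tau}}<\rho$ at the relevant analyticity radius $\sigma$ (namely $\sigma=\sigma_{n+1}$). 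The first requirement is immediate: by the very definition \eqref{Adritto} one has $\mathcal{G}_{n+1}=\mathcal{G}_n\cap G_{n+1}(w_n)\subseteq G_{n+1}(w_n)$, so every $\varepsilon\in\mathcal{G}_{n+1}$ automatically satisfies the Melnikov conditions \eqref{cantorAn} with weight $w_n$.

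The substantive point is to bound $w_n$ in the auxiliary higher time-regularity norm $\|\cdot\|_{\sigma,s'}$, with $s':=s+\frac{2\tau(\tau-1)}{2-\tau}$, by a constant times $\varepsilon/\gamma$ \emph{with the constant independent of $n$}. The way I would achieve this is through the finely graded decomposition $w_n=\sum_{i=0}^n h_i$, $h_i\in W^{(i)}\cap X_{\sigma_i,s}$, supplied by Proposition \ref{mainpropw}. Each $h_i$ is supported on time-Fourier modes $|\ell|\le L_i=L_0 2^i$, so raising the time-Sobolev index from $s$ to $s'$ costs only the factor $\langle L_i\rangle^{s'-s}$; combining this with the monotonicity of $\|\cdot\|_{\sigma,s'}$ in $\sigma$ (recall $\sigma_{n+1}\le\sigma_i$ for $i\le n$) and with \eqref{stimacca} gives
\begin{equation*}
\|h_i\|_{\sigma_{n+1},\,s'}\;\le\;\|h_i\|_{\sigma_i,\,s'}\;\le\;\langle L_0 2^i\rangle^{\,s'-s}\,\|h_i\|_{\sigma_i,s}\;\le\;\langle L_0 2^i\rangle^{\,s'-s}\,\frac{\varepsilon}{\gamma}\,\exp(-\chi^i)\,,\qquad \chi=\tfrac32\,.
\end{equation*}
Since $\chi>1$, the super-exponential factor $\exp(-\chi^i)$ beats the merely exponential growth $\langle L_0 2^i\rangle^{s'-s}$, hence $C_\ast:=\sum_{i\ge 0}\langle L_0 2^i\rangle^{s'-s}\exp(-\chi^i)$ converges, with $C_\ast$ depending only on $L_0$ and $\tau$ — thus, via Proposition \ref{mainpropw}, only on $\gamma,\tau,\bar\sigma,s$ — and \emph{not} on $n$. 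Summing, $\|w_n\|_{\sigma_{n+1},s'}\le \frac{\varepsilon}{\gamma}\,C_\ast$, and it remains to choose $\varepsilon_0=\varepsilon_0(\gamma,\tau,\bar\sigma,s)>0$ small enough that $\frac{\varepsilon_0}{\gamma}C_\ast<\rho$ (the radius in \eqref{rho}) and, simultaneously, not larger than the threshold appearing in Proposition \ref{inversolinearizzato}; all these constraints are uniform in $n$, so $w_n(\varepsilon)$ stays in the ball where $v(\cdot)$ and the whole analysis of Section \ref{invlin} are available.

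With both hypotheses verified, Proposition \ref{inversolinearizzato} (with $n$ replaced by $n+1$ and $w$ by $w_n$) yields at once the invertibility of $\mathfrak{L}_{n+1}(\varepsilon,w_n)$ on $W^{(n+1)}$ together with the bound
\begin{equation*}
\big\|\mathfrak{L}_{n+1}(\varepsilon,w_n)^{-1}[h]\big\|_{\sigma,s}\;\le\;\frac{K}{\gamma}\,L_{n+1}^{\tau-1}\,\|h\|_{\sigma,s}\,,\qquad \forall\, h\in W^{(n+1)}\cap X_{\sigma,s}\,,\quad K=8\cdot 9^2\,,
\end{equation*}
which is precisely estimate \eqref{stimalinearizzato} read at step $n+1$. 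The one genuinely delicate ingredient is the $n$-uniform smallness of $\|w_n\|_{\sigma_{n+1},s'}$: a crude application of \eqref{estiwn} alone would produce the factor $\langle L_n\rangle^{s'-s}=\langle L_0 2^n\rangle^{s'-s}$ and hence an $\varepsilon_0$ depending on $n$, which is inadmissible; it is exactly the telescoped estimate \eqref{stimacca}, whose super-exponential decay outweighs the bounded loss of time regularity incurred in passing from $s$ to $s'$, that repairs this and makes the argument work with an $n$-independent threshold $\varepsilon_0$.
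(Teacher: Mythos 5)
Your proposal is correct and follows the same strategy as the paper: reduce to showing $\|w_n\|_{\sigma_{n+1},\,s+\beta}\le (\varepsilon/\gamma)\,C$ with $\beta=\tfrac{2\tau(\tau-1)}{2-\tau}$ and $C$ independent of $n$, by exploiting the graded decomposition $w_n=\sum_{i=0}^n h_i$ together with the super-exponential decay $\|h_i\|_{\sigma_i,s}\lesssim(\varepsilon/\gamma)\exp(-\chi^i)$ from \eqref{stimacca}, then invoke Proposition \ref{inversolinearizzato} at level $n+1$ with weight $w_n$ (noting $\mathcal{G}_{n+1}\subseteq G_{n+1}(w_n)$ by \eqref{Adritto}). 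Where you diverge from the paper is in the conversion of the $s$-Sobolev bound on $h_i$ into a bound in the higher norm $s+\beta$. You use the hard time-frequency truncation $h_i\in W^{(i)}$ (modes $|\ell|\le L_i=L_02^i$), so the Sobolev gain costs the exponentially growing factor $\langle L_i\rangle^{\beta}\sim (L_02^i)^{\beta}$; the paper instead applies the interpolation inequality \eqref{stimaesp} that trades analyticity for Sobolev regularity, with $\alpha=\sigma_i-\sigma_{n+1}\ge\theta/(1+i^2)$, which yields only a polynomially growing factor $\sim\bigl(\beta(1+i^2)/\theta\bigr)^{\beta}$. Both factors are harmlessly absorbed by $\exp(-\chi^i)$, which decays like $e^{-e^{c i}}$ since $\chi=\tfrac32>1$, so both resulting series converge uniformly in $n$. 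Your route is more elementary (pure Fourier-mode counting, no interpolation lemma), at the small cost that your constant $C_\ast$ explicitly carries the parameter $L_0$, whereas the paper's $K(\tau)$ does not; since $L_0=L_0(\gamma,\tau,\bar\sigma,s)$ is eventually fixed, the resulting $\varepsilon_0$ still depends only on the advertised parameters, so this is not a gap, merely a slightly coarser bookkeeping.
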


\begin{proof}
It is sufficient to prove
\begin{equation}\label{normaalta}
\|w_n\|_{\sigma_{n+1},s+\beta}\leq \frac{\varepsilon}{\gamma}K(\tau),\quad \text{where }\beta:=\frac{2\tau(\tau-1)}{2-\tau}.
\end{equation}
Indeed, by \eqref{normaalta}, taking $\varepsilon_0\gamma^{-1}K(\tau)< \rho$, the Lemma follows applying Proposition \ref{inversolinearizzato}.\\
We introduce the following loss in analiticity-gain in Sobolev estimate
\begin{equation}\label{stimaesp}
\|u\|_{\sigma-\alpha,s+\beta}\leq  \max \left\lbrace 1, \exp(-\beta)\left(\frac{\beta}{\alpha}\right)^\beta \right\rbrace\|u\|_{\sigma, s},\quad \forall \alpha,\beta \geq 0
\end{equation}
which follows from
$$
\sup\limits_{x\geq 0} \exp(-\alpha x) \langle x \rangle^\beta\leq \max \left\lbrace 1, \exp(-\beta)\left(\frac{\beta}{\alpha}\right)^\beta \right\rbrace,\,\, \forall \alpha,\beta \geq 0.
$$
We expand $w_n=\sum\limits_{i=0}^n h_i$, and using \eqref{stimaesp} on each $h_i$ with $\sigma=\sigma_i$, $\alpha=\sigma_i-\sigma_{n+1},$ $\beta=\frac{2\tau(\tau-1)}{2-\tau}$ we obtain
$$
\|h_i\|_{\sigma_{n+1},s+\beta}\leq \max \left\lbrace 1, \exp(-\beta)\left(\frac{\beta}{\sigma_i-\sigma_{n+1}}\right)^\beta \right\rbrace \|h_i\|_{\sigma_i,s} \, , 
$$
and, using \eqref{stimacca}, 
$$
\|w_n\|_{\sigma_{n+1},s}\leq \sum\limits_{i=0}^{n} \frac{\varepsilon}{\gamma}\exp(-\chi^i)\max \left\lbrace 1, \exp(-\beta)\left(\frac{\beta(1+i^2)}{\theta}\right)^\beta \right\rbrace\leq K(\tau)\frac{\varepsilon}{\gamma}.
$$
The lemma is proved.
\end{proof}

We can now solve Equation \eqref{rangelin} by contraction.

\begin{lemma}[Contraction]\label{lemmacontraction}
There exist $L_0:=L_0(\gamma,\tau,\bar{\sigma},s)>0,\, \varepsilon_0 := \varepsilon_0(\gamma,\tau,\bar{\sigma},s)>0$ such that for any $ \varepsilon \in \mathcal{G}_{n+1}$ defined in \eqref{Adritto}, there exists a unique solution $ h_{n+1}(\varepsilon)\in W^{(n+1)}$ of $\eqref{rangelin}$ satisfying $\|h_{n+1}\|_{\sigma_{n+1},s}\leq \frac{\varepsilon}{\gamma}\exp\left(-\chi^{n+1}\right)$. It follows that $w_{n+1}:=w_n+h_{n+1}$ is solution of $\eqref{wenne}$ at the $n+1$ step.
\end{lemma}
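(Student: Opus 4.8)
The plan is to recast \eqref{rangelin} as a fixed point equation and solve it by the Banach contraction principle in the closed ball
$$ B_{n+1} := \bigl\{\, h \in W^{(n+1)}\cap X_{\sigma_{n+1},s} \ :\ \|h\|_{\sigma_{n+1},s}\le \delta_{n+1}\,\bigr\}, \qquad \delta_{n+1} := \tfrac{\varepsilon}{\gamma}\exp(-\chi^{n+1}), \quad \chi=\tfrac32. $$
By Lemma \ref{particolare}, for $\varepsilon\in\mathcal{G}_{n+1}$ the operator $\mathfrak{L}_{n+1}(\varepsilon,w_n)$ is invertible on $W^{(n+1)}$ and its inverse satisfies \eqref{stimalinearizzato} at step $n+1$, i.e. it loses the factor $\gamma^{-1}L_{n+1}^{\tau-1}$ times the absolute constant $K$. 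I would then consider
$$ \mathcal{T}(h) := \varepsilon\,\mathfrak{L}_{n+1}(\varepsilon,w_n)^{-1}\bigl[\,r_n+\mathcal{R}_n(h)\,\bigr], $$
which maps $W^{(n+1)}$ into itself since $r_n,\mathcal{R}_n(h)\in W^{(n+1)}$ by \eqref{resto1}--\eqref{resto2}, and whose fixed points are exactly the solutions of \eqref{rangelin}; once such an $h_{n+1}$ is found, $w_{n+1}=w_n+h_{n+1}$ solves \eqref{wenne} by the computation preceding \eqref{rangelin}.

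The two quantities to control are the Fourier tail $r_n$ and the quadratic remainder $\mathcal{R}_n$. For $r_n$: since $P_n^\perp\Pi_W\Gamma(w_n)\in W^{(n)\perp}$ and $\|w_n\|_{\sigma_n,s}\le K_1\varepsilon/\gamma<\rho$ by \eqref{estiwn}, Lemma \ref{gamma} gives $\|\Gamma(w_n)\|_{\sigma_n,s}\le R'$, and the smoothing estimate \eqref{smooth} yields
$$ \|r_n\|_{\sigma_{n+1},s}\le \exp\bigl(-L_n(\sigma_n-\sigma_{n+1})\bigr)\,\|\Gamma(w_n)\|_{\sigma_n,s}\le R'\exp\!\Bigl(-\tfrac{L_0 2^n\theta}{1+n^2}\Bigr), $$
using $\sigma_n-\sigma_{n+1}=\theta/(1+n^2)$ and $L_n=L_02^n$. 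For $\mathcal{R}_n$: a second order Taylor expansion together with the bound $\sup\|D^2\Gamma\|\le R'$ of Lemma \ref{gamma} (legitimate because $\|w_n\|_{\sigma_{n+1},s}\le\|w_n\|_{\sigma_n,s}$ and $\|h\|_{\sigma_{n+1},s}\le\delta_{n+1}\le\varepsilon/\gamma$ keep $w_n+h$ inside $\mathcal{D}^{W}_{\sigma_{n+1},s}(\rho)$ for $\varepsilon_0$ small) gives $\|\mathcal{R}_n(h)\|_{\sigma_{n+1},s}\le\tfrac12 R'\|h\|_{\sigma_{n+1},s}^2$, and, writing $D_h\mathcal{R}_n(h)=P_{n+1}\Pi_W\bigl(D_w\Gamma(w_n+h)-D_w\Gamma(w_n)\bigr)$ and invoking the mean value inequality, $\|\mathcal{R}_n(h_1)-\mathcal{R}_n(h_2)\|_{\sigma_{n+1},s}\le R'\max\{\|h_1\|_{\sigma_{n+1},s},\|h_2\|_{\sigma_{n+1},s}\}\,\|h_1-h_2\|_{\sigma_{n+1},s}$.

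Combining with \eqref{stimalinearizzato} at step $n+1$ yields, on $B_{n+1}$,
$$ \|\mathcal{T}(h)\|_{\sigma_{n+1},s}\le \tfrac{K}{\gamma}L_{n+1}^{\tau-1}\varepsilon\Bigl(R'\exp\!\bigl(-\tfrac{L_02^n\theta}{1+n^2}\bigr)+\tfrac12 R'\delta_{n+1}^2\Bigr), $$
$$ \|\mathcal{T}(h_1)-\mathcal{T}(h_2)\|_{\sigma_{n+1},s}\le \tfrac{K}{\gamma}L_{n+1}^{\tau-1}\varepsilon R'\delta_{n+1}\,\|h_1-h_2\|_{\sigma_{n+1},s}. $$
The Lipschitz constant equals $KR'L_{n+1}^{\tau-1}\varepsilon^2\gamma^{-2}\exp(-\chi^{n+1})$; since $L_{n+1}^{\tau-1}\exp(-\chi^{n+1})=(L_02^{n+1})^{\tau-1}\exp(-(3/2)^{n+1})$ is bounded uniformly in $n$ (it tends to $0$), it is $\le\tfrac12$ for $\varepsilon_0$ small, and the same smallness bounds the quadratic term by $\tfrac12\delta_{n+1}$. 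The delicate term is the one carrying $r_n$: one needs $\tfrac{KR'}{\gamma}L_{n+1}^{\tau-1}\exp\!\bigl(-\tfrac{L_02^n\theta}{1+n^2}\bigr)\le \tfrac{1}{2\gamma}\exp(-\chi^{n+1})$ for all $n\ge0$, i.e. $L_0\,\tfrac{2^n\theta}{1+n^2}\ge (3/2)^{n+1}+(\tau-1)\ln(L_02^{n+1})+\ln(2KR')$; since $\dfrac{2^n}{(1+n^2)(3/2)^n}=\dfrac{(4/3)^n}{1+n^2}\to\infty$, this ratio has a strictly positive infimum over $n\ge0$, so the left-hand side dominates $\tfrac32(3/2)^n$ up to the linear-in-$n$ and logarithmic-in-$L_0$ corrections once $L_0$ is chosen large enough (depending only on $\gamma,\tau,\bar\sigma,s$); this is the step that fixes $L_0$. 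Then $\mathcal{T}$ is a $\tfrac12$-contraction of $B_{n+1}$ into itself, Banach's theorem gives the unique $h_{n+1}(\varepsilon)\in B_{n+1}$ with $\|h_{n+1}\|_{\sigma_{n+1},s}\le\delta_{n+1}=\tfrac{\varepsilon}{\gamma}\exp(-\chi^{n+1})$, and $w_{n+1}=w_n+h_{n+1}\in W^{(n+1)}$ solves \eqref{wenne}.

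The main obstacle is exactly this last inequality: the argument must win simultaneously against the derivative loss $L_{n+1}^{\tau-1}$ produced by the small--divisor inversion of Proposition \ref{inversolinearizzato}, against the fact that the analyticity radii can be decreased only at the slow polynomial rate $\sigma_n-\sigma_{n+1}=\theta/(1+n^2)$ (which controls the smallness of the tail $r_n$ in \eqref{smooth}), and against the super-exponential target rate $\exp(-\chi^{n+1})$, $\chi=\tfrac32$, needed so that the telescoping series $\sum_i h_i$ converges in $X_{\sigma_\infty,s}$ with $\sigma_\infty>\bar\sigma/2$. Everything is saved because $L_n(\sigma_n-\sigma_{n+1})=L_02^n\theta/(1+n^2)$ still outgrows $(3/2)^{n+1}$; this forces the a priori large choice of $L_0$, after which the smallness of $\varepsilon_0$ is routine bookkeeping of the quadratic and contraction terms. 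One should also keep in mind that the hypothesis of Lemma \ref{particolare}, namely $\|w_n\|_{\sigma_{n+1},s+\beta}<\rho$ with $\beta=\frac{2\tau(\tau-1)}{2-\tau}$, is precisely \eqref{normaalta}, and that $\mathcal{T}$ preserves the subspace $W^{(n+1)}$ (in particular its even-in-time structure).
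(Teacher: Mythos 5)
Your proposal follows the same route as the paper's proof: the same fixed-point map $\mathcal{T}(h)=\varepsilon\,\mathfrak{L}_{n+1}(\varepsilon,w_n)^{-1}[r_n+\mathcal{R}_n(h)]$ on the ball of radius $\tfrac{\varepsilon}{\gamma}\exp(-\chi^{n+1})$, the same bound on $r_n$ via the smoothing estimate \eqref{smooth}, the same quadratic Taylor bound on $\mathcal{R}_n$, the same Lipschitz estimate via the mean value inequality, and the same balance $L_0 2^n\theta/(1+n^2)\gg (3/2)^{n+1}$ to fix $L_0$ and then $\varepsilon_0$. Apart from minor constant factors in the intermediate bounds (e.g.\ $R'/2$ vs.\ $R'$, $R'\max$ vs.\ $2R'(\|h\|+\|h'\|)$), this matches the paper's argument, including the correct reliance on Lemma \ref{particolare} for the linearized inverse.
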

\begin{proof}
Remembering the definitions \eqref{resto1}, \eqref{resto2} we are going to prove that the map
$$
\mathcal{T}_{\varepsilon}^{(n+1)}(h):=\varepsilon\mathfrak{L}_{n+1}(\varepsilon, w_{n})^{-1}(r_{n}+\mathcal{R}_n(h))
$$
is a contraction on the set
$$
 \mathcal{D}_{n+1}:=\left\lbrace h\in W^{(n+1)}\, :\, \|h\|_{\sigma_{n+1},s}\leq \rho_{n+1} \right\rbrace, \quad \rho_{n+1}:=\frac{\varepsilon}{\gamma}\exp(-\chi^{n+1}).
$$
{\it Step 1: $\mathcal{T}_{\varepsilon}^{(n+1)}(\varepsilon,\cdot)$ maps the disk $\mathcal{D}_{n+1}$ into itself.}\\
By  \eqref{R'} and the smoothing estimates \eqref{smooth}, we have
\begin{equation}\label{rpiccolo}
\|r_{n}\|_{\sigma_{n+1},s}\leq \exp(-L_n(\sigma_n-\sigma_{n+1}))\|P_{n+1}\Pi_W\Gamma(w_n)\|_{\sigma_n,s}\leq \exp(-L_n \gamma_n) R'.
\end{equation}
In order give a bound of the term $\mathcal{R}_n(h)$ we write it in integral Taylor remainder form:
\begin{equation}\label{integral}
\mathcal{R}_n(h)=P_{n+1}\Pi_W Z(h)[h,h],\quad \quad Z(h):= \int\limits_0^1 t\int\limits_0^1 D^2\Gamma(w_n+sth)dsdt.
\end{equation}
Then one has $\|\mathcal{R}_n(h)\|_{\sigma_n,s}\leq \sup\limits_{t\in[0,1]}\|D^2\Gamma(w_n+th)\|_{\sigma_{n+1},s}\|h\|^2_{\sigma_{n+1},s}$ and  by \eqref{R'} we obtain
\begin{equation}\label{rgrande}
\| \mathcal{R}_n(h)\|_{\sigma_{n+1},s}\leq R'\|h\|^2_{\sigma_{n+1},s} \, .
\end{equation}
For any $\varepsilon \in  \mathcal{G}_{n+1}$ it follows by Lemma \ref{particolare}, \eqref{rpiccolo}, \eqref{rgrande} for all $h\in \mathcal{D}_{n+1}$
\begin{align*}
\left\|\mathcal{T}_{\varepsilon}^{(n+1)}(h)\right\|_{\sigma_{n+1},s}\leq \frac{K\varepsilon}{\gamma}(L_{n+1})^{\tau-1}\left( \|r_{n}\|_{\sigma_{n+1},s}+\|\mathcal{R}_n(h)\|_{\sigma_{n+1},s}\right)\\
\leq \varepsilon\frac{C'}{\gamma}L_{n+1}^{\tau-1}(\exp(-L_n \gamma_n)+\rho_{n+1}^2).
\end{align*}
Since $\rho_{n+1}=\frac{\varepsilon}{\gamma}\exp(-\chi^{n+1})$ one has:
\begin{itemize}
\item{$\varepsilon\frac{C'}{\gamma}L_{n+1}^{\tau-1}\exp(-L_n \gamma_n)=\varepsilon\frac{C'}{\gamma}L_{0}^{\tau-1}2^{(n+1)(\tau-1)}\exp\left(-L_0\theta\frac{2^{n+1}}{1+n^2}\right)\leq \frac{\rho_{n+1}}{2}$, in fact this holds if and only if $L_0^{\tau-1}\exp\left(-L_0\theta\frac{2^{n+ 1}}{1+n^2}\right)\leq C'^{-1}2^{-(n+1)(\tau-1)}\exp(-\chi^{n+1}), \,\, \forall n \in \N$.\\
The left hand side is way smaller for large $n$ since $\frac{2^n}{1+n^2}\gg \chi^{n+1}$, and so if $L_0$ is chosen large enough the inequality holds $\forall n\in \N$.
}
\item{$C'\frac{\varepsilon}{\gamma}(2^{n+1}L_0)^{\tau-1}\rho_{n+1}^2\leq \frac{\rho_{n+1}}{2}$, in fact by simplifying one has the equivalent condition\\
$$\frac{\varepsilon^2}{\gamma^2}\leq \inf\limits_{n\in \N} \frac{\exp(\chi^{n+1})}{2^{1+(n+1)(\tau-1)}C'L_0^{\tau-1}}$$
which is clearly satisfied if $\varepsilon_0\gamma^{-1}$ is small enough since the right hand side goes to $\infty$ as $n\rightarrow\infty$.\\
}
\end{itemize}
{\it Step 2: $\mathcal{T}_{\varepsilon}^{(n+1)}$ is a contraction on the disk $\mathcal{D}_{n+1}$.}
By \eqref{integral} we have
$$\mathcal{R}_n(h)-\mathcal{R}_n(h')=Z(h)[h,h]-Z(h')[h',h']=V(h)[h,h-h']+Z(h)[h-h',h']+(Z(h)-Z(h'))[h',h'].$$
Now the terms $Z(h),Z(h')$ are bounded by $\|D^2\Gamma\|$, while $Z(h)-Z(h')$ is bounded by $\|D^3\Gamma\|\|h-h'\|$, thus by \eqref{R'} we have for all $h\in \mathcal{D}_{n+1}$
\begin{equation}\label{rgrande2}
\|\mathcal{R}_n(h)-\mathcal{R}_n(h')\|_{\sigma_{n+1},s}\leq 2R'(\|h\|_{\sigma_{n+1},s}+\|h'\|_{\sigma_{n+1},s})\|h-h'\|_{\sigma_{n+1},s}.
\end{equation}
Hence, we obtain by \eqref{stimalinearizzato}, \eqref{rgrande2}:
\begin{align*}
&\left\|\mathcal{T}_{\varepsilon}^{(n+1)}(h)-\mathcal{T}_{\varepsilon}^{(n+1)}(h')\right\|_{\sigma_{n+1},s}=\left\|\varepsilon\mathfrak{L}_{n+1}(\varepsilon,w_n)(\mathcal{R}_n(h)-\mathcal{R}_n(h'))\right\|_{\sigma_{n+1},s}\\
&\leq \varepsilon \frac{C}{\gamma}(L_{n+1})^{\tau-1}C'(\|h\|_{\sigma_{n+1},s}+\|h'\|_{\sigma_{n+1},s})\|h-h'\|_{\sigma_{n+1},s}\\
&\leq C''\frac{\varepsilon^2}{\gamma^2} (L_{n+1})^{\tau-1}\exp(-\chi^{n+1})\|h-h'\|_{\sigma_{n+1},s}.
\end{align*}
It follows that for $\frac{\varepsilon_0^2}{\gamma^2}\leq \inf\limits_{n\in \N}\frac{\exp(\chi^{n+1})}{C''2^{1+(n+1)(\tau-1)}L_0^{\tau-1}}$ one has that $\mathcal{T}_{n+1}(\varepsilon, \cdot)$ is a contraction.
\end{proof}
In the next Lemma we show that for $L_0$ big enough and $\varepsilon_0$ small enough, both depending on $\gamma$, estimates \eqref{stimacca} hold for a constant $\bar{C}$ not depending on $\gamma$.
\begin{lemma}[Estimate of $\partial_{\varepsilon}h_{n+1}(\varepsilon)$]
There exists $L_0 :=L_0(\gamma,\tau,\sigma,s)>0$, $\varepsilon_0 :=\varepsilon_0(\gamma,\tau,\sigma,s) > 0 $ such that the function $\partial_\varepsilon h_{n+1}(\varepsilon)$ in Lemma \ref{lemmacontraction} satisfies \eqref{stimacca} at the $n+1$ step. 
As a consequence the function $w_{n+1}=\sum\limits_{i=0}^{n+1}h_{i}(\varepsilon)$ satisfies \eqref{estiwn} at $n+1$ step.
\end{lemma}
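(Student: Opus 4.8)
The plan is to obtain $\partial_\varepsilon h_{n+1}$ by differentiating the fixed-point identity of Lemma~\ref{lemmacontraction} and exploiting that the underlying map is a contraction. First I would make the $\varepsilon$-dependence fully explicit, writing $h_{n+1}=F(\varepsilon,h_{n+1})$ with
\[
F(\varepsilon,h):=\varepsilon\,\mathfrak{L}_{n+1}\big(\varepsilon,w_n(\varepsilon)\big)^{-1}\big(r_n(\varepsilon)+\mathcal{R}_n(\varepsilon,h)\big),
\]
where $r_n(\varepsilon)=P_{n+1}P_n^\perp\Pi_W\Gamma(w_n(\varepsilon))$ and $\mathcal{R}_n(\varepsilon,h)=P_{n+1}\Pi_W\big(\Gamma(w_n(\varepsilon)+h)-\Gamma(w_n(\varepsilon))-D_w\Gamma(w_n(\varepsilon))[h]\big)$ depend on $\varepsilon$ also through $w_n(\varepsilon)$. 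By the inductive hypotheses of Proposition~\ref{mainpropw}, $w_n\in C^1$ with $\|w_n\|_{\sigma_n,s}\le K_1\varepsilon/\gamma$ and $\|\partial_\varepsilon w_n\|_{\sigma_n,s}\le K_1'$; since $\Gamma$ has bounded derivatives up to order $3$ by \eqref{R'}, $\omega(\varepsilon)=\sqrt{1+\varepsilon}$ is smooth, and $\mathfrak{L}_{n+1}(\varepsilon,w_n(\varepsilon))$ is invertible on $W^{(n+1)}$ for $\varepsilon\in\mathcal{G}_{n+1}$ by Lemma~\ref{particolare}, the map $F$ is $C^1$ jointly in $(\varepsilon,h)$ on $\mathcal{G}_{n+1}\times\mathcal{D}_{n+1}$; moreover the Lipschitz estimate in the proof of Lemma~\ref{lemmacontraction} shows $\|D_hF(\varepsilon,h)\|\le\tfrac12$ on $\mathcal{D}_{n+1}$ (after a further fixed shrinking of $\varepsilon_0$). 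Hence $\mathrm{Id}-D_hF(\varepsilon,h_{n+1})$ is invertible with inverse of norm $\le2$, and differentiating $h_{n+1}=F(\varepsilon,h_{n+1})$ gives $\partial_\varepsilon h_{n+1}=(\mathrm{Id}-D_hF(\varepsilon,h_{n+1}))^{-1}[(\partial_\varepsilon F)(\varepsilon,h_{n+1})]$, so that $\|\partial_\varepsilon h_{n+1}\|_{\sigma_{n+1},s}\le 2\,\|(\partial_\varepsilon F)(\varepsilon,h_{n+1})\|_{\sigma_{n+1},s}$. It remains to estimate the partial $\varepsilon$-derivative of $F$ with $h$ frozen at $h_{n+1}$.

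Next I would estimate $\partial_\varepsilon F$. Writing $\partial_\varepsilon\big[\varepsilon\mathfrak{L}_{n+1}^{-1}\big]=\mathfrak{L}_{n+1}^{-1}-\varepsilon\,\mathfrak{L}_{n+1}^{-1}(\partial_\varepsilon\mathfrak{L}_{n+1})\mathfrak{L}_{n+1}^{-1}$ and using \eqref{def.A}, \eqref{amplfreq} (so $\partial_\varepsilon\mathcal{L}_{\omega(\varepsilon)}=-\partial_{tt}$) together with \eqref{linop}, one has
\[
\partial_\varepsilon\mathfrak{L}_{n+1}(\varepsilon,w_n(\varepsilon))=-\partial_{tt}-P_{n+1}\Pi_W D_w\Gamma(w_n)-\varepsilon\,P_{n+1}\Pi_W D_w^2\Gamma(w_n)[\partial_\varepsilon w_n].
\]
The only dangerous term is $\partial_{tt}$, whose norm on $W^{(n+1)}\cap X_{\sigma_{n+1},s}$ is $\le L_{n+1}^2$; with \eqref{R'} and the inductive bound on $\partial_\varepsilon w_n$ this gives $\|\partial_\varepsilon\mathfrak{L}_{n+1}\|\lesssim L_{n+1}^2$, hence by \eqref{stimalinearizzato} $\|\partial_\varepsilon[\varepsilon\mathfrak{L}_{n+1}^{-1}]\|\lesssim\gamma^{-2}L_{n+1}^{2\tau}$. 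On the other hand $\partial_\varepsilon r_n=P_{n+1}P_n^\perp\Pi_W D_w\Gamma(w_n)[\partial_\varepsilon w_n]$ is supported on time-frequencies $>L_n$, so the smoothing estimate \eqref{smooth} produces the factor $\exp(-L_n\gamma_n)$, $\gamma_n:=\sigma_n-\sigma_{n+1}=\theta/(1+n^2)$, exactly as for $r_n$ in \eqref{rpiccolo}; and $\partial_\varepsilon\mathcal{R}_n(\varepsilon,h)\big|_{h\ \mathrm{fixed}}=P_{n+1}\Pi_W\int_0^1(1-t)D^3\Gamma(w_n+th)[\partial_\varepsilon w_n,h,h]\,dt$ is $O(\|h\|_{\sigma_{n+1},s}^2)$ by \eqref{R'}, hence $O(\rho_{n+1}^2)$ at $h=h_{n+1}\in\mathcal{D}_{n+1}$. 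Collecting the four contributions to $\partial_\varepsilon F$, using also $\|r_n\|,\|\mathcal{R}_n(h_{n+1})\|\lesssim\exp(-L_n\gamma_n)+\rho_{n+1}^2$ and $\|\varepsilon\mathfrak{L}_{n+1}^{-1}\|\le\gamma^{-2}L_{n+1}^{2\tau}$, I arrive at a bound of the form, with implied constant depending only on $\tau,\bar\sigma,s,m$,
\[
\|\partial_\varepsilon h_{n+1}\|_{\sigma_{n+1},s}\ \lesssim\ \frac{L_{n+1}^{2\tau}}{\gamma^{2}}\,\exp(-L_n\gamma_n)\ +\ \frac{\varepsilon^{2}\,L_{n+1}^{2\tau}}{\gamma^{4}}\,\exp(-2\chi^{n+1}).
\]

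Finally I would absorb the factor $L_{n+1}^{2\tau}=(L_0 2^{n+1})^{2\tau}$ — which is only exponential in $n$ — into the double-exponential gains. For the first summand: since $L_n\gamma_n=L_0\theta\,2^n/(1+n^2)$ grows faster than $\bar\chi^{n+1}+2\tau\log L_{n+1}+2\log(1/\gamma)$ for all $n\ge0$ (because $2>\chi>\bar\chi>1$, $2^n/(1+n^2)$ eventually dominates any fixed exponential of rate $<2$, and $\log(1/\gamma),\log L_0$ enter only logarithmically), choosing $L_0=L_0(\gamma,\tau,\bar\sigma,s)$ large enough gives $\gamma^{-2}L_{n+1}^{2\tau}\exp(-L_n\gamma_n)\le\tfrac12\exp(-\bar\chi^{n+1})$ for every $n$. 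For the second summand: since $2\chi^{n+1}-\bar\chi^{n+1}\ge\chi^{n+1}$ dominates $2\tau(n+1)\log2$, the quantity $\sup_{n\ge0}L_{n+1}^{2\tau}\exp(-(2\chi^{n+1}-\bar\chi^{n+1}))\le L_0^{2\tau}\,C(\chi,\bar\chi,\tau)$ is finite, so after $L_0$ is fixed I choose $\varepsilon_0=\varepsilon_0(\gamma,\tau,\bar\sigma,s)$ small enough that $\varepsilon_0^{2}L_0^{2\tau}\gamma^{-4}$ is bounded by a $\gamma$-independent constant, which forces $\gamma^{-4}\varepsilon^2 L_{n+1}^{2\tau}\exp(-2\chi^{n+1})\le\tfrac12\exp(-\bar\chi^{n+1})$. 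Together these yield $\|\partial_\varepsilon h_{n+1}\|_{\sigma_{n+1},s}\le\bar C\exp(-\bar\chi^{n+1})$ with $\bar C=\bar C(\tau,\bar\sigma,s,\bar\chi)$ (and $m$) independent of $\gamma$, i.e. \eqref{stimacca} at step $n+1$. Then \eqref{estiwn} at step $n+1$ follows by telescoping and $\|h_i\|_{\sigma_{n+1},s}\le\|h_i\|_{\sigma_i,s}$ (as $\sigma_{n+1}\le\sigma_i$): $\|w_{n+1}\|_{\sigma_{n+1},s}\le\sum_{i=0}^{n+1}\|h_i\|_{\sigma_i,s}\le\frac{\varepsilon}{\gamma}\sum_{i\ge0}\exp(-\chi^i)=:K_1\frac{\varepsilon}{\gamma}$ and $\|\partial_\varepsilon w_{n+1}\|_{\sigma_{n+1},s}\le\sum_{i=0}^{n+1}\|\partial_\varepsilon h_i\|_{\sigma_i,s}\le\bar C\sum_{i\ge0}\exp(-\bar\chi^i)=:K_1'$, both series converging since $\chi,\bar\chi>1$. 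I expect the main obstacle to be precisely this last step: ensuring that the factor $L_{n+1}^{2\tau}$ created by differentiating $\mathfrak{L}_{n+1}^{-1}$ through the unbounded $\partial_{tt}$ term is beaten, uniformly in $n$, by the double-exponential smallness of the tail remainder $r_n$ and of $\rho_{n+1}^2$, while keeping $\bar C$ free of $\gamma$ — which is exactly what dictates the order of quantifiers ($L_0$ large, then $\varepsilon_0$ small, both in terms of $\gamma$).
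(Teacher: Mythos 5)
Your argument is correct and reaches the same conclusion, but it is organized differently from the paper's proof and is worth contrasting with it. The paper differentiates the \emph{un-preconditioned} equation $U_{n+1}(\varepsilon,h):=\mathcal{L}_\omega(w_n+h)-\varepsilon P_{n+1}\Pi_W\Gamma(w_n+h)=0$ directly: the implicit function theorem gives $\partial_\varepsilon h_{n+1}=-(D_hU_{n+1})^{-1}\partial_\varepsilon U_{n+1}$, where $D_hU_{n+1}(\varepsilon,h_{n+1})=\mathfrak{L}_{n+1}(\varepsilon,w_{n+1})$ is inverted by a Neumann series around $\mathfrak{L}_{n+1}(\varepsilon,w_n)$, and the unbounded piece $-\partial_{tt}$ of $\partial_\varepsilon U_{n+1}$ is applied to the already-tiny $h_{n+1}$, yielding the sharp estimate $L_{n+1}^2\rho_{n+1}$ (cf.\ \eqref{stimeDU}) and finally $\|\partial_\varepsilon h_{n+1}\|\lesssim \gamma^{-1}L_{n+1}^{\tau+1}\rho_{n+1}+\cdots$. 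You instead differentiate the \emph{preconditioned} fixed point $h=F(\varepsilon,h)$ with $F=\varepsilon\mathfrak{L}_{n+1}(\varepsilon,w_n)^{-1}(r_n+\mathcal{R}_n)$. Note that $D_hU_{n+1}(\varepsilon,h_{n+1})=\mathfrak{L}_{n+1}(\varepsilon,w_n)\big(\mathrm{Id}-D_hF(\varepsilon,h_{n+1})\big)$, so your ``contraction therefore $\mathrm{Id}-D_hF$ is invertible with norm $\le 2$'' is exactly the paper's Neumann-series step in disguise, and is a nice streamlining. On the other hand, differentiating through $\mathfrak{L}_{n+1}^{-1}$ pushes $\partial_{tt}$ between two inverses and forces you to bound $\mathfrak{L}_{n+1}^{-1}$ applied to $r_n+\mathcal{R}_n$ in norm, rather than recognizing that $\varepsilon\mathfrak{L}_{n+1}^{-1}(r_n+\mathcal{R}_n(h_{n+1}))=h_{n+1}$; this is why you end up with the cruder polynomial $L_{n+1}^{2\tau}/\gamma^2$ in place of the paper's $L_{n+1}^{\tau+1}/\gamma$ (recall $\tau\in\,]1,2[\,$ so $2\tau>\tau+1$). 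Since the double-exponential gains $\exp(-L_n\gamma_n)$ and $\exp(-\chi^{n+1})$ dominate any polynomial in $L_{n+1}=L_02^{n+1}$, the cruder bound still closes the induction, and your order of quantifiers ($L_0=L_0(\gamma,\dots)$ first, then $\varepsilon_0=\varepsilon_0(\gamma,\dots)$, with $\bar C$ free of $\gamma$) matches the paper's. In short: same mechanism, slightly lossier accounting at the $\partial_{tt}$ step, equally valid conclusion; if you insert the observation $\varepsilon\mathfrak{L}_{n+1}^{-1}(r_n+\mathcal{R}_n(h_{n+1}))=h_{n+1}$ you recover the paper's sharper $L_{n+1}^{\tau+1}\rho_{n+1}$.
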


\begin{proof}
The function $h_{n+1}$ is the unique solution in $\mathcal{D}_{n+1}$ of $U_{n+1}(\varepsilon,h_{n+1})=0$, where
\begin{equation}\label{defU}
U_{n+1}(\varepsilon, h):=\mathcal{L}_\omega (w_n + h)-\varepsilon P_{n+1}\Pi_W \Gamma(w_n+h).
\end{equation}
By differentiating \eqref{defU} we have $D_h U_{n+1}(\varepsilon, h_{n+1})=\mathcal{L}_\omega-\varepsilon P_{n+1}\Pi_W D_w\Gamma(w_n+h_{n+1})=\mathfrak{L}_{n+1}(\varepsilon,w_{n+1})$.\\
By Lemma \ref{particolare} if $\varepsilon \in \mathcal{G}_{n+1}$, then $\mathfrak{L}_{n+1}(\varepsilon,w_n)$ is invertible and \eqref{stimalinearizzato} holds.\\
Moreover $\mathfrak{L}_{n+1}(\varepsilon,w_{n+1})-\mathfrak{L}_{n+1}(\varepsilon,w_{n})=\varepsilon P_{n+1}\Pi_W(D_w\Gamma(w_{n+1})-D_w\Gamma(w_n))$, hence by \eqref{R'}:
\begin{equation}\label{dis2t}
\left\| \mathfrak{L}_{n+1}(\varepsilon,w_{n+1})h_{n+1}-\mathfrak{L}_{n+1}(\varepsilon,w_{n})h_{n+1}   \right\|_{\sigma_{n+1},s}\leq  \varepsilon R' \|h_{n+1}\|_{\sigma_{n+1},s}\leq R'\varepsilon^2\gamma^{-1}\exp(-\chi^{n+1}).
\end{equation}
We rewrite $D_hU_{n+1}(\varepsilon,h_{n+1})=	\mathfrak{L}_{n+1}(\varepsilon,w_{n+1})$ in the following way:
\begin{align*}
&\mathfrak{L}_{n+1}(\varepsilon,w_{n+1})=\mathfrak{L}_{n+1}(\varepsilon,w_{n})+\mathfrak{L}_{n+1}(\varepsilon,w_{n+1})-\mathfrak{L}_{n+1}(\varepsilon,w_{n})\\
&=\mathfrak{L}_{n+1}(\varepsilon,w_n)\left( \mathds{1} +\mathfrak{L}_{n+1}(\varepsilon,w_{n})^{-1}\left(\mathfrak{L}_{n+1}(\varepsilon,w_{n+1})-\mathfrak{L}_{n+1}(\varepsilon,w_{n})\right)\right).
\end{align*}
Using \eqref{stimalinearizzato}, \eqref{dis2t}, we obtain:
$$\left\|\mathfrak{L}_{n+1}(\varepsilon,w_{n})^{-1}\left(\mathfrak{L}_{n+1}(\varepsilon,w_{n+1})-\mathfrak{L}_{n+1}(\varepsilon,w_{n})\right)\right\|_{B\left(X_{\sigma_{n+1},s}\right)}\leq KR'\varepsilon^2 \gamma^{-2}(L_{n+1})^{\tau-1}\exp(-\chi^{n+1}).$$
Thus, under the smallness assumption $\varepsilon_0 \gamma^{-1} \leq \left[(KR')^{-1}L_0^{-\tau+1} \inf\limits_{n\in \N}2^{-1-(n+1)(\tau-1)}\exp(\chi^{n+1})\right]^{\frac{1}{2}}$, we can invert $(\mathds{1} +\mathfrak{L}_{n+1}(\varepsilon,w_{n})^{-1}\left(\mathfrak{L}_{n+1}(\varepsilon,w_{n+1})-\mathfrak{L}_{n+1}(\varepsilon,w_{n}))\right)$ by Neumann Series, obtaining that $D_h U_{n+1}(\varepsilon,h_{n+1})$ is invertible, and again by \eqref{stimalinearizzato} its inverse satisfies
\begin{equation}\label{diffU}
\left\|D_h U_{n+1}(\varepsilon,h_{n+1})^{-1}h\right\|_{\sigma_{n+1},s}\leq \frac{2K}{\gamma}(L_{n+1})^{\tau-1}\|h\|_{\sigma_{n+1},s},\,\, \forall h\in W^{(n+1)},\, \forall \varepsilon \in \mathcal{G}_{n+1}.
\end{equation}
This proves that $h_{n+1}\in C^1(\mathcal{G}_{n+1},W^{(n+1)})$, since it is the implicit function defined by $U_{n+1}(\varepsilon,h_{n+1})=0$ which has invertible differential in $h$ by \eqref{diffU}. 
Now we estimate $\partial_\varepsilon U_{n+1}(\varepsilon,h)$, in order to do that we write $U_{n+1}(\varepsilon,h)$ in the following way (using $\mathcal{L}_\omega w_n=\varepsilon P_n\Pi_W\Gamma(w_n)$):
$$
U_{n+1}(\varepsilon,h)=\mathcal{L}_{\omega}h-\varepsilon P_{n+1}\Pi_W \left(\Gamma(w_n+h)-\Gamma(w_n)\right)+\varepsilon P_n^{\perp}P_{n+1}\Pi_W \Gamma(w_n),
$$
and by computing its derivatives (remember that $\omega^2=1+\varepsilon)$ we obtain
\begin{align*}
&\partial_\varepsilon U_{n+1}(\varepsilon,h)=-\partial_{tt}h-P_{n+1}\Pi_W\left(\Gamma(w_n+h)-\Gamma(w_n)\right)+ P_n^{\perp}P_{n+1}\Pi_W \Gamma(w_n)\\
&+\varepsilon \left( -P_{n+1}\Pi_W\left(\partial_w\Gamma(w_n+h)-\partial_w\Gamma(w_n)\right)\partial_\varepsilon w_n + P_n^{\perp}P_{n+1}\Pi_W \partial_w\Gamma(w_n)\partial_\varepsilon w_{n} \right).
\end{align*}
Using \eqref{R'}, \eqref{smooth} one estimates each of these terms
\begin{equation}\label{stimeDU}
\|\partial_\varepsilon U_{n+1}(\varepsilon,h_{n+1})\|_{\sigma_{n+1},s}\leq R'\left(1+\varepsilon\|\partial_\varepsilon w_n(\varepsilon))\|_{\sigma_n,s}\right)\left(\exp(-L_n\gamma_n)+\rho_{n+1}\right)+L_{n+1}^2\rho_{n+1}.
\end{equation}
It follows by the Implicit function theorem and estimates \eqref{diffU}, \eqref{stimeDU} that
\begin{equation}\label{acaso}
\|\partial_\varepsilon h_{n+1}\|_{\sigma_{n+1},s}\leq \frac{2K}{\gamma}L_{n+1}^{\tau+1}\rho_{n+1}+\frac{2KR'L_{n+1}^{\tau-1}}{\gamma}\left(1+\varepsilon\|\partial_\varepsilon w_n(\varepsilon))\|_{\sigma_n,s}\right)\left(\exp(-L_n\gamma_n)+\rho_{n+1}\right).
\end{equation}
It follows that for $L_0(\gamma, \tau, \bar{\sigma},s)$ big enough and $\varepsilon_0(\gamma,\tau, \bar{\sigma},s)$ small enough, given $\bar{\chi}\in \left]0,\chi\right[$, $\exists \bar{C}=\bar{C}(\tau,\bar{\sigma},s,\bar{\chi})$ large enough such that $\|\partial_\varepsilon h_{n+1}\|_{\sigma_{n+1},s}\leq \bar{C}\exp(-\bar{\chi}^{n+1}),\forall n\in \N$. Indeed by \eqref{acaso} it is sufficient to choose $\bar{C}$ which satisfies the following inequalities:
\begin{itemize}
\item{$\frac{2K}{\gamma}L_{n+1}^{\tau+1}\rho_{n+1}\leq \frac{1}{3}\bar{C} \exp(-\bar{\chi}^{n+1})$.\\
We take $\bar{C}\geq \varepsilon\gamma^{-2}6K'\max\limits_{n\in\N}L_{n+1}^{\tau+1}\exp\left(-\chi^{n+1}+\bar{\chi}^{n+1}\right)$, which can be done since the right hand side goes to 0 as $n\to \infty$.
}
\item{$\frac{4KR'}{\gamma}L_{n+1}^{\tau-1}\left(1+3\varepsilon \bar{C}\right)\rho_{n+1}\leq \frac{1}{3}\bar{C} \exp(-\bar{\chi}^{n+1}).$\\
We take $\frac{1}{3\varepsilon_0}\geq \bar{C}\geq \frac{24KR'}{\gamma}\varepsilon\gamma^{-1}\max\limits_{n\in\N}L_{n+1}^{\tau-1}\exp(-\chi^{n+1}+\bar{\chi}^{n+1})$.\\
The r.h.s goes to $0$ as $n\to \infty$, hence there exists a constant $\bar{C}$ which is upper bound $\forall n\in \N$. Then for $\varepsilon_0$ small enough both inequalities are satisfied and compatible.
}
\item{$\frac{4KR'}{\gamma}L_{n+1}^{\tau-1}\left(1+3\varepsilon \bar{C}\right)\exp(-L_n\gamma_n)\leq \frac{1}{3}\bar{C} \exp(-\bar{\chi}^{n+1}).$\\
As we did before, we take $\frac{1}{3\varepsilon_0}\geq \bar{C}\geq  \frac{24KR'}{\gamma}\max\limits_{n\in \N}L_{n+1}^{\tau-1} \exp\left(-\frac{\theta L_02^n}{1+n^2}+\bar{\chi}^{n+1}\right)$.
Since the $\max$ exists finite, we can pick such a $\bar{C}>0$.
}
\end{itemize}

By taking a constant $\bar{C}$ for which the previous conditions hold, we obtain that\\
$\|\partial_\varepsilon h_{n+1}(\varepsilon)\|_{\sigma_{n+1},s}	\leq \bar{C} \exp(-\bar{\chi}^{n+1}),\,\, \forall n \in \N$, and so \eqref{stimacca} is proved.\\
As a consequence $\|w_{n+1}(\varepsilon)\|_{\sigma_{n+1},s}\leq \sum\limits_{i=0}^{n+1}\frac{\varepsilon}{\gamma}\exp(-\chi^{i})\leq K_1 \frac{\varepsilon}{\gamma},$ with $K_1=\sum\limits_{i=0}^{\infty}\exp(-\chi^i) $, and\\ $\|\partial_\varepsilon w_{n+1}(\varepsilon)\|_{\sigma_{n+1},s}\leq \sum\limits_{i=0}^{n+1}\|\partial_\varepsilon h_i(\varepsilon)\|_{\sigma_i,s}\leq \bar{C}\sum\limits_{i=0}^{\infty}\exp(-\bar{\chi}^i)=: K_1'$.
\end{proof}
We now extend the functions $w_n(\varepsilon)$, which are defined for $\varepsilon \in \mathcal{G}_n$, to the whole set $\varepsilon \in [0,\varepsilon_0]$.

\begin{lemma}[Whitney $C^1$-Extension]\label{wit}
For any $i\in \N$ 
there exists $ \,\tilde{h}_i \in C^1([0,\varepsilon_0],W^{(i)} \cap X_{\sigma_i,s})$ satisfying $\|\tilde{h}_i\|_{\sigma_i,s}\leq \frac{K\varepsilon}{\gamma}\exp(-\tilde{\chi}^i),$ for some $ \tilde{\chi} \in ]1,\bar{\chi}[,$ and such that $\tilde{w}_n:=\sum\limits_{i=0}^n\tilde{h}_i \in C^1([0,\varepsilon_0],W^{(n)}\cap X_{\sigma_n,s})$ satisfies items \ref{tilde1}, \ref{tilde2} of Proposition \ref{mainpropw}.
\end{lemma}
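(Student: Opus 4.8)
The plan is to carry out a $C^1$ Whitney-type extension: I cut each $h_i$ by a smooth bump adapted to the collar of width $\sim\nu/L_i^3$ that is removed in \eqref{Atildati} when passing from $\mathcal G_i$ to $\tilde{\mathcal G}_i$, and then extend the product by zero. For $i=0$ I would simply set $\tilde h_0:=h_0$, which by the Initialization Lemma is already a $C^1$ function on all of $[0,\varepsilon_0]=\mathcal G_0=\tilde{\mathcal G}_0$; so $\psi_0\equiv1$ below. For $i\ge1$, the function $h_i\in C^1(\mathcal G_i,W^{(i)}\cap X_{\sigma_i,s})$ — obtained from the Contraction lemma \ref{lemmacontraction} together with the subsequent estimate of $\partial_\varepsilon h_{n+1}$, and satisfying \eqref{stimacca} on the relatively open set $\mathcal G_i\subseteq[0,\varepsilon_0]$ — is multiplied by a cutoff supported strictly inside $\mathcal G_i$ and equal to $1$ on $\tilde{\mathcal G}_i$.

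Concretely, for $i\ge1$ I would fix the intermediate closed set $\mathcal G_i':=\{\varepsilon\in\mathcal G_i\,:\,\mathrm{dist}(\varepsilon,\partial\mathcal G_i)\ge\nu L_i^{-3}\}$, so that $\tilde{\mathcal G}_i\subseteq\mathcal G_i'$ and, by an elementary triangle-inequality argument, $\mathrm{dist}(\tilde{\mathcal G}_i,\,[0,\varepsilon_0]\setminus\mathcal G_i')\ge\nu L_i^{-3}$. Mollifying the characteristic function of $\mathcal G_i'$ on scale $\sim\nu L_i^{-3}$ yields $\psi_i\in C^\infty([0,\varepsilon_0],[0,1])$ with $\psi_i\equiv1$ on $\tilde{\mathcal G}_i$, $\operatorname{supp}\psi_i\subseteq\mathcal G_i'$, and $\|\psi_i'\|_\infty\le c_0\,L_i^3\nu^{-1}$ for an absolute constant $c_0>0$. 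Defining $\tilde h_i$ to be $\psi_i h_i$ on $\mathcal G_i$ and $0$ elsewhere, the fact that $\operatorname{supp}\psi_i$ is a closed set contained in the (relatively) open set $\mathcal G_i$ on which $h_i$ is $C^1$ guarantees that the extension by zero across the collar $\mathcal G_i\setminus\operatorname{supp}\psi_i$ is smooth, hence $\tilde h_i\in C^1([0,\varepsilon_0],W^{(i)}\cap X_{\sigma_i,s})$.

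The estimates are then immediate from \eqref{stimacca}. From $0\le\psi_i\le1$ I get $\|\tilde h_i\|_{\sigma_i,s}\le\|h_i\|_{\sigma_i,s}\le\frac\varepsilon\gamma\exp(-\chi^i)$, and by Leibniz $\|\partial_\varepsilon\tilde h_i\|_{\sigma_i,s}\le\|\psi_i'\|_\infty\|h_i\|_{\sigma_i,s}+\|\partial_\varepsilon h_i\|_{\sigma_i,s}\le c_0 L_i^3\nu^{-1}\frac\varepsilon\gamma\exp(-\chi^i)+\bar C\exp(-\bar\chi^i)$. Since $L_i^3=(L_0 2^i)^3$ grows only geometrically in $i$ while $\exp(-\chi^i)$ decays doubly geometrically, and since $\chi>\bar\chi>1$, for any fixed $\tilde\chi\in ]1,\bar\chi[$ one has $L_i^3\exp(-\chi^i)\le C\exp(-\tilde\chi^i)$ and $\exp(-\bar\chi^i)\le\exp(-\tilde\chi^i)$; using also $\frac\varepsilon\gamma\le\varepsilon_0\gamma^{-1}\le1$ and $\nu\le1$ (both in force from the smallness assumptions already imposed), this gives $\|\tilde h_i\|_{\sigma_i,s}\le\frac{K\varepsilon}{\gamma}\exp(-\tilde\chi^i)$ and $\|\partial_\varepsilon\tilde h_i\|_{\sigma_i,s}\le C'\nu^{-1}\exp(-\tilde\chi^i)$. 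Summing over $i=0,\dots,n$ and using $\sigma_n\le\sigma_i$ (hence $\|\cdot\|_{\sigma_n,s}\le\|\cdot\|_{\sigma_i,s}$), I obtain $\tilde w_n:=\sum_{i=0}^n\tilde h_i\in C^1([0,\varepsilon_0],W^{(n)}\cap X_{\sigma_n,s})$ with $\|\tilde w_n\|_{\sigma_n,s}\le K_2\frac\varepsilon\gamma$ and $\|\partial_\varepsilon\tilde w_n\|_{\sigma_n,s}\le K_2'\nu^{-1}$, where $K_2:=K\sum_{i\ge0}\exp(-\tilde\chi^i)$ and $K_2':=C'\sum_{i\ge0}\exp(-\tilde\chi^i)$ are independent of $n,\varepsilon,\gamma$; this is item \ref{tilde2}. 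For item \ref{tilde1}: if $\varepsilon\in\bigcap_{i=0}^n\tilde{\mathcal G}_i$, then $\varepsilon\in\tilde{\mathcal G}_n\subseteq\mathcal G_n$ and $\psi_i(\varepsilon)=1$ for every $i\le n$, so $\tilde h_i(\varepsilon)=h_i(\varepsilon)$ and therefore $\tilde w_n(\varepsilon)=\sum_{i=0}^n h_i(\varepsilon)=w_n(\varepsilon)$.

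The one genuinely delicate point, I expect, is the balance inside the bound for $\partial_\varepsilon\tilde h_i$ between the geometric growth of the cutoff slopes $\|\psi_i'\|_\infty\sim L_i^3\nu^{-1}$ and the doubly geometric decay of $h_i$: this is exactly why the extension rate $\tilde\chi$ must be taken strictly below $\bar\chi$ — hence why $\chi,\bar\chi$ were kept larger than $1$ throughout the iteration — and why the excised collars are chosen polynomially small, $\sim\nu/L_i^3$, rather than merely shrinking; any faster shrinking would ruin the uniform-in-$n$ estimate $\|\partial_\varepsilon\tilde w_n\|_{\sigma_n,s}\le K_2'\nu^{-1}$ required later for the measure estimate of Proposition \ref{measure}.
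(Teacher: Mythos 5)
Your proof follows essentially the same strategy as the paper's: multiply each $h_i$ by a smooth cutoff $\psi_i$ supported strictly inside $\mathcal{G}_i$ and equal to $1$ on $\tilde{\mathcal{G}}_i$, extend by zero, and trade the geometric growth of $\|\psi_i'\|_\infty\sim L_i^3\nu^{-1}$ against the super-geometric decay of $\|h_i\|_{\sigma_i,s}$ and $\|\partial_\varepsilon h_i\|_{\sigma_i,s}$ to land at a slightly slower rate $\tilde\chi\in\left]1,\bar\chi\right[$, then sum. There is one genuine difference: you mollify the characteristic function of an intermediate fattened set $\mathcal{G}_i'$ sandwiched between $\tilde{\mathcal{G}}_i$ and $\mathcal{G}_i$, whereas the paper mollifies $\chi_{\tilde{\mathcal{G}}_i}$ directly. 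The latter yields $\psi_i<1$ at points of $\tilde{\mathcal{G}}_i$ near $\partial\tilde{\mathcal{G}}_i$, so the identity $\tilde h_i=h_i$ on $\tilde{\mathcal{G}}_i$ (item \ref{tilde1}) is not directly justified by the paper's construction as written; your interposed set $\mathcal{G}_i'$ repairs this. Two cosmetic imprecisions in your write-up, neither of which affects the argument: mollifying $\chi_{\mathcal{G}_i'}$ fattens the support, so one gets $\operatorname{supp}\psi_i\subset\subset\operatorname{int}\mathcal{G}_i$ (the property you actually use) but not $\operatorname{supp}\psi_i\subseteq\mathcal{G}_i'$ itself; and the mollification scale must be taken strictly below $\nu L_i^{-3}$, say $\nu/(2L_i^3)$, in order that $\psi_i\equiv1$ on $\tilde{\mathcal{G}}_i$.
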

\begin{proof}
Let $\phi:\R \rightarrow \R$ be a smooth cutoff such that $0\leq \phi\leq 1$, $\operatorname{supp} \phi \subseteq ]-1,1[$, $\int\limits_{\R} \phi(\varepsilon) d\varepsilon=1$.\\
Let us take $\nu>0$ and define the rescaled function $\phi_i(\varepsilon):=\frac{L_i^3}{\nu} \phi\left(\frac{L_i^3}{\nu}\varepsilon \right)$, so that $\operatorname{supp} \phi_i \subseteq \left]-\frac{\nu}{L_i^3},\frac{\nu}{L_i^3}\right[$ and $\int\limits_{\R}\phi_i(\varepsilon) d\varepsilon=1.$\\
Now let $\psi_i(\varepsilon):=\phi_i * \chi_{\tilde{\mathcal{G}_i}}(\varepsilon)=\int\limits_\R \phi_i(\varepsilon-\eta)\chi_{\tilde{\mathcal{G}}_i}(\eta)d\eta$.\\
One has $0\leq \psi_i \leq 1$, $\operatorname{supp} \psi_i \subseteq \operatorname{supp} \phi_i + \tilde{\mathcal{G}}_i\subset\subset \operatorname{int}(\mathcal{G}_i)$, moreover $\psi_i \in C^{\infty}$, and\\
$\left|\partial_\varepsilon\psi_i\right|\leq C \frac{L_i^3}{\nu}$, where $C=\left\|\partial_\varepsilon\phi\right\|_{L^1}$ since
\begin{equation}\label{derivata}
\partial_\varepsilon\psi_i(\varepsilon)= \left(\partial_\varepsilon\phi_i\right)*\chi_{\tilde{\mathcal{G}}_i}(\varepsilon)=\left( \frac{L_i^3}{\nu}\right)^2\int\limits_\R\left(\partial_\varepsilon\phi\right)\left( \frac{L_i^3}{\nu}(\varepsilon-\eta)\right)\chi_{\tilde{\mathcal{G}}_i}(\eta)d\eta.
\end{equation}
Now we can define 
$$\tilde{w}_0(\varepsilon):=w_0(\varepsilon),\quad \quad \tilde{w}_{i+1}:=\tilde{w}_i+\tilde{h}_{i+1}\in W^{(i+1)},\quad \text{with}\quad\tilde{h}_{i+1}:=\chi_{\mathcal{G}_{i+1}}(\varepsilon)\psi_{i+1}(\varepsilon)h_{i+1}.$$
We have $\tilde{h}_{i+1}\in C^1\left([0,\varepsilon_0],W^{(i+1)}\right)$ since $h_i$ is differentiable on $\mathcal{G}_i$, and $\operatorname{supp} \psi_i \subset \subset \operatorname{int}(\mathcal{G}_i)$, furthermore $\|\tilde{h}_i(\varepsilon)\|_{\sigma_i,s}\leq \|\psi_i\|_{L^\infty[0,\varepsilon_0]} \|h_i(\varepsilon)\|_{\sigma_i,s}\leq \bar{C}\varepsilon\gamma^{-1}\exp(-\tilde{\chi}^i)$ by \eqref{stimacca}. It follows that:
$$\|\tilde{w}_n\|_{\sigma_n,s}\leq \sum\limits_{i=0}^{n}\|\tilde{h}_i\|_{\sigma_i,s}\leq K_2 \gamma^{-1}\varepsilon.$$
Then, by chain rule, by \eqref{derivata} and \eqref{stimacca}, $\|\partial_\varepsilon\tilde{h}_i\|_{\sigma_i,s}\leq \tilde{K}_2\nu^{-1}\exp(-\tilde{\chi}^i),$ for some $\tilde{\chi}\in ]1,\bar{\chi}[$. It follows $\|\partial_\varepsilon \tilde{w}_n(\varepsilon)\|_{\sigma_n,s}\leq K_2\nu^{-1},\, \forall n\in \N,\, \forall \varepsilon \in [0,\varepsilon_0]$.
\end{proof}
With this we completed the proof of each point of Proposition \ref{mainpropw}. Now we prove Theorem \ref{mainthmw}.
\begin{lemma}[Existence and estimates of $\tilde{w}(\varepsilon)$]\label{existencedelta}
The function $\tilde{w}(\varepsilon):=\sum\limits_{i\in \N} \tilde{h_i}(\varepsilon)=\lim\limits_{n\to \infty}\tilde{w}_n(\varepsilon)$, where $\tilde{h}_i(\varepsilon)$ were defined in Lemma \ref{wit}, satisfies the first estimate of \eqref{stimetilda}. Furthermore
\begin{equation}\label{ratio}
\|\tilde{w}(\varepsilon)-\tilde{w}_n(\varepsilon)\|_{\sigma_\infty,s}\leq \hat{K}\frac{\varepsilon}{\gamma}\exp(-\tilde{\chi}^n).
\end{equation}
\end{lemma}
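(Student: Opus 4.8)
The plan is to exploit that the summands $\tilde h_i$ produced by Lemma~\ref{wit} decay super-exponentially and that the analytic regularities $\sigma_i$ only decrease down to $\sigma_\infty>\frac{\bar\sigma}{2}>0$, so the full analyticity strip is never lost. First I would recall from Lemma~\ref{wit} that $\tilde h_i\in C^1([0,\varepsilon_0],W^{(i)}\cap X_{\sigma_i,s})$ with $\|\tilde h_i(\varepsilon)\|_{\sigma_i,s}\leq \frac{K\varepsilon}{\gamma}\exp(-\tilde\chi^i)$ for some $\tilde\chi\in\,]1,\bar\chi[$, and use the elementary monotonicity $\|u\|_{\sigma',s}\leq\|u\|_{\sigma,s}$ for $\sigma'\leq\sigma$ (immediate from Definition~\ref{anal.sp}, since $\exp(2\sigma'|\ell|)\leq\exp(2\sigma|\ell|)$) together with $\sigma_i\geq\sigma_\infty$ to get $\|\tilde h_i(\varepsilon)\|_{\sigma_\infty,s}\leq\frac{K\varepsilon}{\gamma}\exp(-\tilde\chi^i)$. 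Since $\sum_{i\geq0}\exp(-\tilde\chi^i)<\infty$, the partial sums $\tilde w_n=\sum_{i=0}^n\tilde h_i$ form a Cauchy sequence in the Banach space $X_{\sigma_\infty,s}$, hence converge there to $\tilde w(\varepsilon):=\sum_{i\in\N}\tilde h_i(\varepsilon)\in W\cap X_{\sigma_\infty,s}$; as $\sigma_\infty>\frac{\bar\sigma}{2}$ the same holds in $X_{\frac{\bar\sigma}{2},s}$.

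For the first estimate of \eqref{stimetilda} I would simply apply the triangle inequality: $\|\tilde w(\varepsilon)\|_{\frac{\bar\sigma}{2},s}\leq\|\tilde w(\varepsilon)\|_{\sigma_\infty,s}\leq\sum_{i\geq0}\|\tilde h_i(\varepsilon)\|_{\sigma_\infty,s}\leq\frac{K\varepsilon}{\gamma}\sum_{i\geq0}\exp(-\tilde\chi^i)=:K_2\frac{\varepsilon}{\gamma}$, which fixes the constant $K_2$ and is independent of $\varepsilon,\gamma,n$, consistently with Proposition~\ref{mainpropw}.

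For \eqref{ratio} I would write $\tilde w(\varepsilon)-\tilde w_n(\varepsilon)=\sum_{i>n}\tilde h_i(\varepsilon)$ and estimate the doubly-exponential tail: $\|\tilde w-\tilde w_n\|_{\sigma_\infty,s}\leq\frac{K\varepsilon}{\gamma}\sum_{i>n}\exp(-\tilde\chi^i)$. Since $\tilde\chi^{n+1+k}-\tilde\chi^{n+1}=\tilde\chi^{n+1}(\tilde\chi^k-1)\geq\tilde\chi^k-1$ for all $n,k\geq0$, one has $\sum_{i>n}\exp(-\tilde\chi^i)=\exp(-\tilde\chi^{n+1})\sum_{k\geq0}\exp\big(-(\tilde\chi^{n+1+k}-\tilde\chi^{n+1})\big)\leq C_0\exp(-\tilde\chi^{n+1})\leq C_0\exp(-\tilde\chi^{n})$ with $C_0:=\sum_{k\geq0}\exp(-(\tilde\chi^k-1))<\infty$ independent of $n$. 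This yields \eqref{ratio} with $\hat K:=KC_0$.

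There is no genuine obstacle here — the only point requiring a line of care is verifying that the tail of the doubly-exponential series is still controlled by $\exp(-\tilde\chi^{n})$ with a constant uniform in $n$, which the elementary inequality above settles. I would also note, for later use in the proof of Theorem~\ref{mainthmw}, that the bound $\|\partial_\varepsilon\tilde h_i\|_{\sigma_i,s}\leq\tilde K_2\nu^{-1}\exp(-\tilde\chi^i)$ from Lemma~\ref{wit} makes $\sum_i\partial_\varepsilon\tilde h_i$ converge uniformly on $[0,\varepsilon_0]$ in $X_{\sigma_\infty,s}$, so that $\tilde w\in C^1([0,\varepsilon_0],W\cap X_{\sigma_\infty,s})$ and the second estimate of \eqref{stimetilda} follows by the same triangle-inequality argument.
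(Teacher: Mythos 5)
Your proposal is correct and follows essentially the same route as the paper: use the monotonicity of $\|\cdot\|_{\sigma,s}$ in $\sigma$ together with $\sigma_i\geq\sigma_\infty>\bar\sigma/2$, then sum the super-exponentially decaying bounds $\|\tilde h_i\|_{\sigma_i,s}\lesssim\frac{\varepsilon}{\gamma}\exp(-\tilde\chi^i)$ from Lemma~\ref{wit}. The only difference is that you spell out the tail estimate $\sum_{i>n}\exp(-\tilde\chi^i)\leq C_0\exp(-\tilde\chi^n)$ (with $C_0$ uniform in $n$), which the paper leaves implicit with the words ``similarly \eqref{ratio} follows''; your elementary inequality $\tilde\chi^{n+1+k}-\tilde\chi^{n+1}\geq\tilde\chi^k-1$ settles this correctly.
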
 \begin{proof}
By Lemma \ref{wit} we have $\|\tilde{w}(\varepsilon)\|_{\sigma_\infty,s}\leq \sum\limits_{i=0}^\infty \|\tilde{h}_i(\varepsilon)\|_{\sigma_i,s}\leq K_2\frac{\varepsilon}{\gamma}$ so $\tilde{w}(\varepsilon)\in X_{\frac{\bar{\sigma}}{2},s}$, and we also have
$\|\partial_\varepsilon \tilde{w}(\varepsilon)\|_{\sigma_{\infty},s}\leq  \sum\limits_{i=0}^\infty  \|\partial_\varepsilon \tilde{h}_i(\varepsilon)\|_{\sigma_i,s}\leq K_2\nu^{-1}$, which proves \eqref{stimetilda} since $\sigma_\infty>\frac{\bar{\sigma}}{2}$, and similarly \eqref{ratio} follows.
\end{proof}
Now, in order to complete the proof of Theorem \ref{mainthmw} we introduce the following sets:
\begin{equation}\label{cantorBn}
\begin{gathered}
B_n:=\bigg\lbrace \varepsilon\in [0,\varepsilon_0]\, :\, \left| \omega(\varepsilon)\ell -\omega_j-\varepsilon \frac{M(\tilde{w}(\varepsilon))}{2\omega_j}\right|\geq \frac{2\gamma}{(\ell+\omega_j)^\tau},\\
|\omega(\varepsilon)\ell-\omega_j|\geq \frac{2\gamma}{(\ell+\omega_j)^\tau};\,\, \forall \ell,j\in \N\,:\, \frac{1}{3\varepsilon}\leq\ell\leq L_n,\,\omega_j\leq 2L_n,\, \ell\neq \omega_j\,  \bigg\rbrace .
\end{gathered}
\end{equation}
We remark that the set $B_\infty$ defined in \eqref{cantor} is the limit set of these sets $B_n$, namely $B_\infty:=\bigcap\limits_{n\in\N}B_n$.\\
We also remark that these sets depend only on the limit function $\tilde{w}(\varepsilon)$, while the sets $\mathcal{G}_n$, $\tilde{\mathcal{G}}_n$, respectively defined in \eqref{Adritto},\eqref{Atildati} depend on the $n$-th iterated $w_n(\varepsilon)$. For this reason the sets $B_n$ are much easier to handle if we want to give a measure estimate. Next we show that $\forall n\in \N,\, B_n\subseteq \tilde{\mathcal{G}}_n$, and that $B_\infty$ has actually positive measure.

\begin{lemma}\label{gammanu}
There exist $\nu_0 :=\nu_0(\tau,\bar{\sigma}, s)>0$ and $\varepsilon_0 :=\varepsilon_0(\gamma,\tau,\bar{\sigma},s)>0$ such that for $0<\varepsilon < \varepsilon_0$ and $ 0<\nu\gamma^{-1}<\nu_0$ then  $B_n \subset \tilde{\mathcal{G}}_n,\, \forall n \in \N.$
\end{lemma}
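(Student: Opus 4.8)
The plan is to argue by induction on $n$. The base case $n=0$ is immediate since $\tilde{\mathcal G}_0=[0,\varepsilon_0]=\mathcal G_0\supseteq B_0$. For the inductive step, assume $B_i\subset\tilde{\mathcal G}_i$ for all $i\le n-1$, fix $\varepsilon\in B_n$, and set $I_\varepsilon:=\big(\varepsilon-\tfrac{2\nu}{L_n^3},\varepsilon+\tfrac{2\nu}{L_n^3}\big)\cap[0,\varepsilon_0]$. It suffices to show $I_\varepsilon\subseteq\mathcal G_n=\mathcal G_{n-1}\cap G_n(w_{n-1})$, for this gives both $\varepsilon\in\mathcal G_n$ and $\mathrm{dist}(\varepsilon,\partial\mathcal G_n)\ge\tfrac{2\nu}{L_n^3}$, i.e.\ $\varepsilon\in\tilde{\mathcal G}_n$. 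Two preliminary observations: since $B_n\subseteq B_i$ for every $i\le n$ (enlarging $n$ only adds Melnikov conditions), the inductive hypothesis gives $\varepsilon\in\bigcap_{i=0}^{n-1}\tilde{\mathcal G}_i$, whence $w_{n-1}(\varepsilon)=\tilde w_{n-1}(\varepsilon)$ by item \ref{tilde1} of Proposition \ref{mainpropw}; and $\varepsilon\in\tilde{\mathcal G}_{n-1}$ means $\mathrm{dist}(\varepsilon,\partial\mathcal G_{n-1})\ge\tfrac{2\nu}{L_{n-1}^3}=\tfrac{16\nu}{L_n^3}>\tfrac{2\nu}{L_n^3}$, so the connected set $I_\varepsilon$, containing $\varepsilon\in\mathcal G_{n-1}$ and meeting no boundary point of $\mathcal G_{n-1}$, is entirely contained in $\mathcal G_{n-1}$; in particular $w_{n-1}$ is defined and $C^1$ on all of $I_\varepsilon$ with $\|\partial_\varepsilon w_{n-1}\|\le K_1'$ by \eqref{estiwn}, and by taking $\varepsilon_0\gamma^{-1}$ small enough $\|w_{n-1}(\varepsilon')\|\le K_1\varepsilon'/\gamma<\rho$ for all $\varepsilon'\in I_\varepsilon$.

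It then remains to prove $\varepsilon'\in G_n(w_{n-1}(\varepsilon'))$ for every $\varepsilon'\in I_\varepsilon$, which is the crux. For the indices involved ($\tfrac1{3\varepsilon'}\le\ell\le L_n$, $\omega_j\le 2L_n$, $\ell\ne\omega_j$, hence $\ell+\omega_j\le 3L_n$) one first checks that for $\nu$ small the integer $\ell$ also satisfies $\ell\ge\tfrac1{3\varepsilon}$: the active range requires $\varepsilon'\gtrsim L_n^{-1}$, hence $\varepsilon\gtrsim L_n^{-1}$, and then $\big|\tfrac1{3\varepsilon}-\tfrac1{3\varepsilon'}\big|\le\tfrac{2\nu/L_n^3}{3\varepsilon\varepsilon'}\lesssim\nu/L_n<1$, so this interval contains no integer. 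Thus the $B_n$-bounds hold at $\varepsilon$ with the wider gap $2\gamma$ and with $M(\tilde w(\varepsilon))$. I would then bound the difference of the two small divisors using $|\omega(\varepsilon')-\omega(\varepsilon)|\le\tfrac12|\varepsilon'-\varepsilon|$, the boundedness and Lipschitz continuity of $M$ on $\{\|w\|\le\rho\}$ (Remark \ref{remarkemme}), the bound $\|\partial_\varepsilon w_{n-1}\|\le K_1'$ (MVT on the segment $[\varepsilon,\varepsilon']\subseteq\mathcal G_{n-1}$), the identity $w_{n-1}(\varepsilon)=\tilde w_{n-1}(\varepsilon)$, and the convergence estimate $\|\tilde w(\varepsilon)-\tilde w_{n-1}(\varepsilon)\|_{\sigma_\infty,s}\le\hat K\varepsilon\gamma^{-1}\exp(-\tilde\chi^{\,n-1})$ from \eqref{ratio}, arriving at a bound of the form
$$
\frac{\nu}{L_n^2}+\frac{C_1\,\nu}{L_n^3}+\frac{C_2\,\varepsilon^2}{\gamma}\,\exp(-\tilde\chi^{\,n-1}),\qquad C_1,C_2\ \text{independent of }\gamma.
$$
Since $\tau<2$ one has $L_n^{\tau-2}\le 1$ and $L_n^{\tau-3}\le 1$, so the first two terms are $\le(\mathrm{const}\cdot\nu\gamma^{-1})\,\gamma(3L_n)^{-\tau}$ and can be made $<\tfrac16\gamma(3L_n)^{-\tau}$ by choosing $\nu_0=\nu_0(\tau,\bar\sigma,s)$ small; the third term is $<\tfrac16\gamma(3L_n)^{-\tau}$ once $\varepsilon_0\gamma^{-1}$ is small, because $L_n^\tau\exp(-\tilde\chi^{\,n-1})=(L_02^n)^\tau\exp(-\tilde\chi^{\,n-1})$ is bounded uniformly in $n$ (here $\tilde\chi>1$), even though $L_0$ may depend on $\gamma$. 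Hence the total perturbation is $<\tfrac12\gamma(\ell+\omega_j)^{-\tau}$, and subtracting it from the $B_n$ lower bound $2\gamma(\ell+\omega_j)^{-\tau}$ leaves $>\gamma(\ell+\omega_j)^{-\tau}$; the same estimate (using only the first term) handles the purely linear condition $|\omega(\varepsilon')\ell-\omega_j|>\gamma(\ell+\omega_j)^{-\tau}$. This yields $\varepsilon'\in G_n(w_{n-1}(\varepsilon'))$ and closes the induction.

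The main obstacle I expect is the bookkeeping around the first term $\tfrac12|\omega(\varepsilon')-\omega(\varepsilon)|\ell\le\nu L_n^{-2}$: it is precisely here that the restriction $\tau\in(1,2)$ enters, so that $L_n^{-2}$ dominates the required $L_n^{-\tau}$, and this is why $\nu_0$ can be chosen independently of $\gamma$ while $\nu$ itself must be slaved to $\gamma$ through $\nu\gamma^{-1}<\nu_0$. A secondary technical point is the behaviour near the endpoints $\varepsilon=0$ and $\varepsilon=\varepsilon_0$, where all the Melnikov conditions defining $B_n$ and $\mathcal G_n$ are vacuous so the two sets coincide and no genuine loss occurs.
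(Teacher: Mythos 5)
Your proposal follows the same induction-and-perturbation structure as the paper's proof. You reindex (working with $n$, $w_{n-1}$, and $\tilde\chi^{\,n-1}$ instead of the paper's $n+1$, $w_n$, $\bar\chi^{\,n}$), replace the paper's nesting of balls $D_{\nu,n+1}\subset D_{\nu,n}\subset \mathcal{G}_n$ with an equivalent connectedness argument, and otherwise reproduce the key steps: extract $w_{n-1}(\varepsilon)=\tilde{w}_{n-1}(\varepsilon)$ from the inductive hypothesis, combine \eqref{estiwn} and \eqref{ratio} to bound $\|w_{n-1}(\varepsilon')-\tilde{w}(\varepsilon)\|$ together with the Lipschitz continuity of $M$, then subtract a perturbation of size $O\!\big(\nu L_n^{-2}+\nu L_n^{-3}+\varepsilon^2\gamma^{-1}\exp(-\tilde\chi^{\,n-1})\big)$ from the $B_n$ gap $2\gamma(\ell+\omega_j)^{-\tau}$, using $\tau<2$ to dominate and closing with the smallness requirements on $\nu\gamma^{-1}$ and $\varepsilon_0\gamma^{-1}$. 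This is exactly the paper's route and the quantitative conclusion matches.

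The one place where your reasoning has a genuine flaw is the step claiming that every integer $\ell$ with $\ell\ge\tfrac{1}{3\varepsilon'}$ also satisfies $\ell\ge\tfrac{1}{3\varepsilon}$ because the gap interval has length $\lesssim\nu/L_n<1$ and hence ``contains no integer.'' An interval of length less than $1$ can still contain one integer: nothing prevents, say, $\tfrac{1}{3\varepsilon'}=9.9$ and $\tfrac{1}{3\varepsilon}=10.1$, in which case $\ell=10$ lies in the $G_n$-range at $\varepsilon'$ but falls below the $B_n$-range at $\varepsilon$, and you cannot invoke the $B_n$ lower bound for it. (The paper's proof silently glosses over this index-range mismatch as well.) The correct fix is not interval-length bookkeeping but the trivial estimate: if $\ell<\tfrac{1}{3\varepsilon}$, i.e.\ $\varepsilon\ell<\tfrac13$, then for $\ell\ne\omega_j$ one has $|\omega(\varepsilon)\ell-\omega_j|\ge|\ell-\omega_j|-|\omega(\varepsilon)-1|\,\ell>1-\tfrac16=\tfrac56$, and likewise $\big|\omega(\varepsilon)\ell-\omega_j-\varepsilon\tfrac{M(\tilde w(\varepsilon))}{2\omega_j}\big|\ge\tfrac56-C\varepsilon\ge\tfrac12$ for $\varepsilon_0$ small; since $\gamma<\tfrac16$ and $(\ell+\omega_j)^{\tau}\ge 1$, both $B_n$-type bounds with the wider gap $2\gamma$ hold for such $\ell$ with a large margin, so the perturbation argument runs for them too. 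With that one step replaced, the proposal is sound.
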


\begin{proof}
First of all, by definition  $B_0 \subset \tilde{\mathcal{G}}_0=[0,\varepsilon_0]$, now assume as induction hypothesis that $B_n \subset \tilde{\mathcal{G}}_n$, our aim is to prove that $B_{n+1} \subset \tilde{\mathcal{G}}_{n+1}$. 
In order to prove that, it is sufficient to show that $\forall \varepsilon\in B_{n+1},$ the ball \\$D_{\nu, n+1}(\varepsilon):=\left\lbrace \varepsilon'\in [0,\varepsilon_0]\, :\, |\varepsilon-\varepsilon'| < \frac{2\nu}{L_{n+1}^3} \right\rbrace$ satisfies the inclusion $D_{\nu, n+1}(\varepsilon) \subset \mathcal{G}_{n+1}$.\\
We have by induction assumption that $B_{n+1} \subseteq B_n \subset \tilde{\mathcal{G}}_n$, and since $L_{n+1}>L_n$ then  the ball $D_{\nu,n+1}(\varepsilon)\subset D_{\nu,n}(\varepsilon) \subset \mathcal{G}_{n}$, since $\varepsilon 	\in \tilde{\mathcal{G}}_n$.\\
Now consider $\varepsilon'\in D_{\nu, n+1}(\varepsilon)$, since $\varepsilon\in \tilde{\mathcal{G}}_n$ we have $\tilde{w}_n(\varepsilon)=w_n(\varepsilon)$, then by \eqref{estiwn} and \eqref{ratio} we obtain:

\begin{equation}\label{stimewu}
\|w_n(\varepsilon')-\tilde{w}(\varepsilon)\|_{\sigma_\infty,s}\leq \tilde{K}\left(\frac{\nu}{L_n^3}+\frac{\varepsilon}{\gamma}\exp(-\bar{\chi}^{n})\right).
\end{equation}
Then, letting $\omega=\sqrt{1+\varepsilon},\, \omega'=\sqrt{1+\varepsilon'}$ and using the definiton of $B_n$ in \eqref{cantorBn} and the estimates \eqref{stimewu} we obtain:
\begin{align*}
&\left| \omega'\ell-\omega_j-\varepsilon'\frac{M(w_n(\varepsilon'))}{2\omega_j}\right|\geq \left| \omega\ell-\omega_j-\varepsilon'\frac{M(\tilde{w}(\varepsilon))}{2\omega_j}\right|-|(\omega-\omega')\ell|-\frac{\varepsilon'}{2\omega_j}|M(w_n(\varepsilon'))-M(w(\varepsilon))|\\
&\geq\frac{2\gamma}{(\ell+\omega_j)^{\tau}}-\frac{1}{2}|\varepsilon-\varepsilon'||\ell|- \frac{\tilde{C}\varepsilon'}{\omega_j}\left(\|w_n(\varepsilon')-w(\varepsilon)\|_{\sigma_\infty,s}\right)\\
&\geq \frac{2\gamma}{(\ell+\omega_j)^{\tau}}-\bar{C}\left(\frac{\nu}{L_n^3}\left(|\ell|+\frac{\varepsilon'}{\omega_j}\right)+ \frac{\varepsilon'^2}{\gamma \omega_j}\exp(-\bar{\chi}^n)\right)\\
&\geq \frac{2\gamma}{(\ell+\omega_j)^{\tau}}-\bar{C}\gamma\left(\frac{\nu\gamma^{-1}}{L_n^3}\left(|\ell|+\frac{\varepsilon'\gamma^{-1}}{\omega_j}\right)+ \frac{(\varepsilon'\gamma^{-1})^2}{\omega_j}\exp(-\bar{\chi}^n)\right)\\
&\geq \frac{2\gamma}{(\ell+\omega_j)^{\tau}}-\bar{C}\frac{\gamma}{L_n^2}\left(2\nu\gamma^{-1}+(\varepsilon'\gamma^{-1})^2L_{n}^2\exp(-\bar{\chi}^n)\right)\\
&\geq\ \frac{2\gamma}{(\ell+\omega_j)^{\tau}}-9\bar{C}\frac{\gamma}{(\ell+\omega_j)^2}\left(2\nu\gamma^{-1}+(\varepsilon'\gamma^{-1})^2L_{n}^2\exp(-\bar{\chi}^n)\right)\\
&\geq \frac{2\gamma}{(\ell+\omega_j)^\tau}-\frac{\gamma}{(\ell +\omega_j)^2}>\frac{\gamma}{(\ell+\omega_j)^{\tau}}.
\end{align*}
Where we used that the conditions $\ell \leq L_{n+1},\,  \omega_j \leq 2L_{n+1}$ imply $-\frac{1}{L_n^2}\geq -\frac{9}{(\ell+\omega_j)^2}$ for the second last passage, and then we used $ \tau <2$ and the smallness assumptions $\nu\gamma^{-1}\leq (36\bar{C})^{-1},\,\varepsilon_0 \gamma^{-1}\leq \left(\frac{1}{18\bar{C}}\min\limits_{n\in \N}\exp(\bar{\chi}^n)L_n^{-2}\right)^{\frac{1}{2}}$ for the last passage.\\
In the same way one proves the second condition $\left| \omega'\ell-\omega_j\right|> \frac{\gamma}{(\ell+\omega_j)^\tau}$.\\
This completes the proof that $B_{n+1}\subset \tilde{\mathcal{G}}_{n+1}$.
\end{proof}
\begin{proof}[Proof of Theorem \ref{mainthmw}]
Since $B_n\subset \tilde{\mathcal{G}_n},\, \forall n \in \N$, then if $\varepsilon\in \bigcap\limits_{n=0}^{\infty}B_n$ we have that $\forall n\in \N,\,\tilde{w}_n(\varepsilon)=w_n(\varepsilon)$ is solution of \eqref{rangen+1}, and so the following identity holds:
\begin{equation}\label{nuovaequazione}
w_{n}(\varepsilon)=\varepsilon\mathcal{L}_\omega^{-1}P_n\Pi_W\Gamma(w_n(\varepsilon))=\varepsilon\mathcal{L}_\omega^{-1}\Pi_W\Gamma(w_n(\varepsilon))-\varepsilon\mathcal{L}_\omega^{-1}P_n^{\perp}\Pi_W\Gamma(w_n(\varepsilon)).
\end{equation}
Then one has:
\begin{align*}
&\|\varepsilon\mathcal{L}_\omega^{-1}P_n^{\perp}\Pi_W\Gamma(w_n(\varepsilon))\|_{\frac{\bar{\sigma}}{2},s}\leq C\frac{\varepsilon}{\gamma}(L_{n})^{\tau-1}\|P_n^{\perp}\Pi_W\Gamma(w_n(\varepsilon))\|_{\frac{\bar{\sigma}}{2},s}\\
&\stackrel{\eqref{rangesplitting}}\leq C\frac{\varepsilon}{\gamma}(L_0 2^n)^{\tau-1}\exp\left(\left(\frac{\bar{\sigma}}{2}-\sigma_n\right)L_02^n\right)\|\Gamma(w_n(\varepsilon))\|_{\sigma_n,s}\leq C' \frac{\varepsilon}{\gamma}(L_0 2^n)^{\tau-1}\exp\left(-\frac{\theta L_02^n}{1+n^2}\right)\stackrel{ n\rightarrow \infty}{\longrightarrow} 0.
\end{align*}
The left hand side of \eqref{nuovaequazione} does converge to $\tilde{w}(\varepsilon)$ in $X_{\frac{\bar{\sigma}}{2},s}$ by \eqref{ratio}. Moreover $\mathcal{L}_\omega^{-1}\Pi_W\Gamma(w_n(\varepsilon))$ does converge to $\mathcal{L}_\omega^{-1}\Pi_W\Gamma(\tilde{w}(\varepsilon))$ in $X_{\frac{\bar{\sigma}}{2},s}$ since $\mathcal{L}_{\omega}^{-1}$ is bounded from $X_{\sigma_\infty,s}$ to $X_{\frac{\bar{\sigma}}{2}, s}$. It follows:
$$\varepsilon\mathcal{L}_\omega^{-1}\Pi_W\Gamma(\tilde{w}(\varepsilon))= \lim\limits_{n\rightarrow\infty} \varepsilon\mathcal{L}_\omega^{-1}\Pi_W\Gamma(w_n(\varepsilon))=\lim\limits_{n\rightarrow\infty} w_n(\varepsilon)=\tilde{w}(\varepsilon).$$
Moreover by Lemmas we have that $\tilde{w}$ satisfies the second estimates of \eqref{stimetilda} since by Lemma \ref{existencedelta} $\|\partial_\varepsilon \tilde{w}(\varepsilon)\|_{\frac{\bar{\sigma}}{2},s}\leq K\nu^{-1}$ and by Lemma \ref{gammanu} $\nu$ can be chosen as $\nu=\frac{\nu_0}{2}\gamma$. It follows that $\|\partial_\varepsilon \tilde{w}(\varepsilon)\|_{\frac{\bar{\sigma}}{2},s}\leq \frac{2K}{\nu_0}\gamma^{-1}$, and so for $K_2\geq\frac{2K}{\nu_0}$ we have estimates \eqref{stimetilda} and so we conclude the proof of Theorem \ref{mainthmw}.
\end{proof}
We now prove that the set $B_\infty$ has positive measure, in particular it has asymptotically full measure at 0.
\begin{proposition}\label{measure}
The set $B_{\infty}$ defined in \eqref{cantor} has asymptotically full measure at $0$, namely it satisfies \eqref{fullmeas}.
\end{proposition}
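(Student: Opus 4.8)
The plan is to bound the Lebesgue measure of the complement $[0,\eta]\setminus B_\infty$ and to show it is $o(\eta)$ as $\eta\to0^+$; since each $B_n$ in \eqref{cantorBn} is Borel (each defining condition being the union of the open set $\{\varepsilon<\frac{1}{3\ell}\}$ and a closed one), so is $B_\infty=\bigcap_nB_n$, and \eqref{fullmeas} then follows together with the trivial bound $|B_\infty\cap[0,\eta]|\le\eta$. Negating the two families of inequalities defining $B_\infty$ in \eqref{cantor}, I would write, with $g_{\ell,j}(\varepsilon):=\omega(\varepsilon)\ell-\omega_j-\varepsilon\frac{M(\tilde{w}(\varepsilon))}{2\omega_j}$,
\begin{equation*}
[0,\eta]\setminus B_\infty=\bigcup_{\substack{\ell,j\in\N\\ \ell\neq\omega_j}}\big(R_{\ell,j}\cup R_{\ell,j}'\big),\qquad R_{\ell,j}:=\Big\{\varepsilon\in\big[\tfrac{1}{3\ell},\eta\big]\,:\,|g_{\ell,j}(\varepsilon)|<\tfrac{2\gamma}{(\ell+\omega_j)^\tau}\Big\},
\end{equation*}
and $R_{\ell,j}'$ the analogous set with $\omega(\varepsilon)\ell-\omega_j$ in place of $g_{\ell,j}(\varepsilon)$; the interval $[\frac{1}{3\ell},\eta]$ comes from the constraint $\ell\ge\frac{1}{3\varepsilon}$ in \eqref{cantor}, and it makes $R_{\ell,j},R_{\ell,j}'$ empty unless $\ell\ge\frac{1}{3\eta}$, so only large $\ell$ contribute for small $\eta$.

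The next step is to estimate the measure of a single $R_{\ell,j}$. Using that $\tilde{w}\in C^1([0,\varepsilon_0],X_{\bar{\sigma}/2,s})$ and that $M$ is smooth with bounded derivatives near the origin (Remark \ref{remarkemme}), together with $\|\tilde{w}(\varepsilon)\|_{\bar{\sigma}/2,s},\|\partial_\varepsilon\tilde{w}(\varepsilon)\|_{\bar{\sigma}/2,s}\lesssim\gamma^{-1}$ from \eqref{stimetilda}, one checks that $g_{\ell,j}$ is $C^1$ with
\begin{equation*}
g_{\ell,j}'(\varepsilon)=\frac{\ell}{2\sqrt{1+\varepsilon}}-\frac{1}{2\omega_j}\Big(M(\tilde{w}(\varepsilon))+\varepsilon\,DM(\tilde{w}(\varepsilon))[\partial_\varepsilon\tilde{w}(\varepsilon)]\Big),
\end{equation*}
whose second summand has absolute value $\le C_0$ uniformly in $\ell,j$, for a constant $C_0=C_0(m)$, as soon as $\varepsilon_0\gamma^{-1}$ is bounded; since on $R_{\ell,j}$ one has $\ell\ge\frac{1}{3\varepsilon}\ge\frac{1}{3\eta}$, for $\eta$ small enough it follows that $|g_{\ell,j}'|\ge\frac{\ell}{4}$ there. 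As $g_{\ell,j}'$ has constant sign, $g_{\ell,j}$ is strictly monotone, so $|R_{\ell,j}|\le 2\cdot\frac{2\gamma/(\ell+\omega_j)^\tau}{\ell/4}=\frac{16\gamma}{\ell(\ell+\omega_j)^\tau}$, and the same bound holds for $|R_{\ell,j}'|$ (even more directly, since there $g_{\ell,j}'=\frac{\ell}{2\sqrt{1+\varepsilon}}$).

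Then I would count, for fixed $\ell\ge\frac{1}{3\eta}$, the admissible indices $j$: if $R_{\ell,j}\cup R_{\ell,j}'\neq\emptyset$ then $|\omega_j-\sqrt{1+\varepsilon}\,\ell|<1$ for some $\varepsilon\in[0,\eta]$ (using $2\gamma<\frac13$ and $|M|\le C_1(m)$), and since $\sqrt{1+\varepsilon}\,\ell$ sweeps an interval of length $(\sqrt{1+\eta}-1)\ell\le\frac{\eta\ell}{2}$ as $\varepsilon$ ranges over $[0,\eta]$, the integer $\omega_j$ must lie in a fixed interval of length $\le\frac{\eta\ell}{2}+2$; this bounds the number of admissible $j$ by $\lesssim\eta\ell$ (recall $\eta\ell\ge\frac13$) and forces $\frac{\ell}{2}\le\omega_j\le2\ell$, hence $(\ell+\omega_j)^\tau\ge\ell^\tau$. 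Summing the previous estimates,
\begin{equation*}
|[0,\eta]\setminus B_\infty|\lesssim\sum_{\ell\ge 1/(3\eta)}(\eta\ell)\,\frac{\gamma}{\ell^{1+\tau}}=\gamma\,\eta\sum_{\ell\ge 1/(3\eta)}\ell^{-\tau}\lesssim_{\gamma,\tau,m}\eta^{\tau},
\end{equation*}
where I use $\tau>1$ and $\sum_{\ell\ge L}\ell^{-\tau}\lesssim\frac{L^{1-\tau}}{\tau-1}$ with $L\asymp\eta^{-1}$. Hence $\frac{|B_\infty\cap[0,\eta]|}{\eta}\ge1-C\eta^{\tau-1}\to1$ as $\eta\to0^+$ since $\tau>1$, which is \eqref{fullmeas}.

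I expect the main obstacle to be the uniform derivative lower bound $|g_{\ell,j}'|\gtrsim\ell$ of the second step: the quantity $\partial_\varepsilon\big(\varepsilon M(\tilde{w}(\varepsilon))/(2\omega_j)\big)$ is only $O(1)$ and, crucially, \emph{not} small — it involves $\partial_\varepsilon\tilde{w}$, which is merely $O(\gamma^{-1})$ — so the linear-in-$\ell$ term $\ell/(2\sqrt{1+\varepsilon})$ must dominate it. This is exactly why the non-resonance conditions in \eqref{cantor} are imposed only for $\ell\ge\frac{1}{3\varepsilon}$: restricting to small $\varepsilon$ automatically makes the relevant $\ell$ large. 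Granted this, the remaining measure count is a routine small-divisor argument, and the surplus decay $\eta^\tau=o(\eta)$ comes solely from $\tau>1$.
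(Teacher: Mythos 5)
Your proof is correct and follows essentially the same route as the paper: decompose the complement into resonant strips indexed by $(\ell,j)$, show the defining function $g_{\ell,j}$ has derivative $\gtrsim\ell$ to bound each strip by $\gamma/(\ell(\ell+\omega_j)^\tau)$, count the admissible $j\lesssim\eta\ell$ per $\ell$, and sum over $\ell\gtrsim\eta^{-1}$ using $\tau>1$. The only (inconsequential) differences are that you bound the non-$\ell$ part of $g_{\ell,j}'$ by $O(1)$ rather than the paper's sharper $O(\eta)$, and your final estimate $O(\eta^\tau)$ is actually slightly better than the paper's $O(\eta^{(\tau+1)/2})$; both give $o(\eta)$.
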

\begin{proof}
Let $\eta \in ]0,\varepsilon_0[$, and let us estimate the measure of the complementary set 
\begin{align*}
B_{\infty}^c\cap ]0,\eta[=\bigg\lbrace \varepsilon \in ]0,\eta[\, :\, \left|\omega(\varepsilon)\ell-\omega_j-\varepsilon\frac{m(\varepsilon)}{2\omega_j}\right|< \frac{2\gamma}{(\ell + \omega_j)^\tau}\,\, \vee \left|\omega(\varepsilon)\ell-\omega_j\right|< \frac{2\gamma}{(\ell + \omega_j)^\tau},\\
\textit{ for some } \ell,\, j \in \N,\, \ell\geq \frac{1}{3\varepsilon},\, \ell \neq \omega_j \bigg\rbrace
\end{align*}
where we denoted $m(\varepsilon):=M(\tilde{w}(\varepsilon))$, which is a $C^1$ function.
We can write then $$B_\infty^c\cap ]0,\eta[\quad\subseteq\bigcup\limits_{(\ell,j)\in \mathcal{I}_R}S_{\ell,j} \cup R_{\ell,j}$$
$$S_{\ell,j}:=\bigg\lbrace \varepsilon \in \left]\frac{1}{3\ell},\eta\right[\, : \,\bigg| \omega(\varepsilon)\ell-\omega_j-\varepsilon\frac{m(\varepsilon)}{2\omega_j} \bigg| < \frac{2\gamma}{(\ell + \omega_j)^\tau} \bigg\rbrace,$$
$$ R_{\ell,j}:=\bigg\lbrace \varepsilon \in \left]\frac{1}{3\ell},\eta\right[\, : \,| \omega(\varepsilon)\ell-\omega_j| < \frac{2\gamma}{(\ell + \omega_j)^\tau} \bigg\rbrace,$$
$$\mathcal{I}_R:=\left\lbrace (\ell,j)\in \N\times \N\, :\, \ell \neq \omega_j,\, \frac{\omega_j}{\ell}\in [1-4\eta,1+4\eta] \right\rbrace.$$
We observe that the condition $ \frac{\omega_j}{\ell}\in [1-4\eta,1+4\eta]$, appearing  on $\mathcal{I}_R$ is not restrictive, namely if it does not hold, then $S_{\ell,j}=R_{\ell,j}=\emptyset$. Indeed, if we assume $\left| \frac{\omega_j}{\ell}-1\right|>4\eta$, then $|\omega_j-\ell|>4\eta \ell$, and it follows that
$$
|\omega(\varepsilon)\ell-\omega_j |\geq 4\eta\ell-|\omega(\varepsilon)-1|\ell\geq 4\eta \ell-\frac{1}{2}\eta\ell=\frac{7}{2}\eta\ell\geq \frac{7}{6}
$$
$$
\bigg| \omega(\varepsilon)\ell-\omega_j-\varepsilon\frac{m(\varepsilon)}{2\omega_j} \bigg| >4\eta \ell -|\omega(\varepsilon)-1|\ell-C\varepsilon\geq \frac{7}{2}\eta\ell- C\eta\geq 1.
$$
Since $\gamma\in \left]0,\frac{1}{6}\right[$, it follows that the inequalities appearing in the definition of $S_{\ell,j}$ and $R_{\ell,j}$ cannot be satisfied, hence they are empty.\\
We now prove $\left| \bigcup\limits_{(\ell,j)\in \mathcal{I}_R} S_{\ell,j} \right|=o(\eta)$.\\
In order to estimate the measure of each $S_{\ell,j}$ we prove that the $C^1$ functions $f_{\ell,j}(\varepsilon):=\omega(\varepsilon)\ell-\omega_j-\varepsilon\frac{m(\varepsilon)}{2\omega_j}$ satisfy
\begin{equation}\label{effelle}
\partial_\varepsilon f_{\ell,j}(\varepsilon)=\frac{\ell}{2\sqrt{1+\varepsilon}}-\frac{m(\varepsilon)-\varepsilon \partial_\varepsilon m(\varepsilon)}{2\omega_j}\geq \frac{\ell}{4}.
\end{equation}
This follows because, since $(\ell,j)\in \mathcal{I}_R$ we have
$$\left|\frac{m(\varepsilon)}{2\omega_j}\right|\leq \left|\frac{m(\varepsilon)}{2\ell}\right|+\left|\frac{(\ell-\omega_j) m(\varepsilon)}{2\ell\omega_j}\right|\leq \frac{3}{2}\eta |m(\varepsilon)|+\frac{2\eta|m(\varepsilon)|}{\omega_j}\leq C\eta,$$
$$\left| \varepsilon \frac{\partial_\varepsilon m(\varepsilon)}{2\omega_j}\right|\leq C\varepsilon \frac{1}{\nu \omega_j}\leq C'\varepsilon \gamma^{-1}\frac{1}{\omega_j}\leq C''\eta.$$
By \eqref{effelle} it follows that $f_{\ell,j}$ admits $C^1$ inverse function with derivative bounded by $\frac{4}{\ell}$, thus
$$|S_{\ell,j}|\leq\int\limits_{\left\lbrace|f_{\ell,j}(\varepsilon)|<\frac{2\gamma}{(\ell+\omega_j)^\tau}\right\rbrace}d\varepsilon=\int\limits_{\left\lbrace |x|\leq \frac{2\gamma}{(\ell+\omega_j)^\tau}\right\rbrace}df_{\ell,j}^{-1}(x)dx\leq C\frac{\gamma}{\ell(\ell+\omega_j)^\tau}\leq C\frac{\gamma}{\ell^{\tau+1}}.$$
Now given $\ell\in \N$, we estimate the number of $j\in \N$ such that $(\ell,j)\in \mathcal{I}_R$.\\
Using the definition we obtain that if $(\ell,j)\in \mathcal{I}_R$, then $j \in ]\ell-4\eta\ell-1,\ell+4\eta\ell-1[$, hence $\#\left\lbrace j\in \N\, :\,(\ell,j)\in \mathcal{I}_R\right\rbrace \leq 8 \eta\ell$. We can estimate now:
$$\left| \bigcup\limits_{(\ell,j)\in \mathcal{I}_R} S_{\ell,j} \right|\leq \sum\limits_{(\ell,j)\in \mathcal{I}_R}|S_{\ell,j}|\leq  C\sum\limits_{(\ell,j)\in \mathcal{I}_R}\frac{\gamma}{\ell^{\tau+1}}\leq C'\sum\limits_{\ell>\frac{1}{3\eta}}\frac{\gamma \eta \ell}{\ell^{\tau+1}}\leq C''\eta\gamma \sum\limits_{\ell>\frac{1}{3\eta}}\frac{1}{\ell^{\frac{\tau+1}{2}}}\eta^{\frac{\tau-1}{2}}\leq c \gamma \eta^{\frac{\tau+1}{2}}.$$
Hence we proved that it is $o(\eta)$, since $\tau>1$.\\
With similar arguments we prove $\left| \bigcup\limits_{(\ell,j)\in \mathcal{I}_R} R_{\ell,j} \right|=o(\eta)$, using $f_{\ell,j}(\varepsilon)=\omega(\varepsilon)\ell-\omega_j$. This concludes the proof since we have:
$$\lim\limits_{\eta\to 0^+}\frac{\left| B_\infty\cap ]0,\eta[\,\right|}{\eta}\geq\lim\limits_{\eta\to 0^+}\frac{\eta-\left| \bigcup\limits_{(\ell,j)\in \mathcal{I}_R} S_{\ell,j} \right|-\left| \bigcup\limits_{(\ell,j)\in \mathcal{I}_R} R_{\ell,j} \right|}{\eta}= \lim\limits_{\eta\to 0^+}\frac{\eta-o(\eta)}{\eta}=1.$$
\end{proof}
\remark\label{rem315} Since the amplitude to frequency map $\omega(\varepsilon)=\sqrt{1+\varepsilon}$ is smooth near 0 with bounded derivative from below and above by some positive constants, then the set $\omega(B_{\infty})$ has also asymptotically full measure at 1.\\

We finally prove that the solutions of Theorem \ref{teoremone} are $C^\infty$ also in space.
\begin{lemma}\label{classical}
Let $\sigma>0$, $s>\frac{1}{2}$, $r>\frac{3}{2}$ and assume that $u\in X_{\sigma, s,r}$ is a solution of
\begin{equation}\label{eqqq}
    -\omega^2\partial_{tt}u-Au=u^3.
\end{equation}
Then, for any $0<\tilde{\sigma}<\sigma$, it results that $u\in X_{\tilde{\sigma}, s, r+2}$, in particular $u
$ is $C^{\infty}$ also in the  variable $x$.
\end{lemma}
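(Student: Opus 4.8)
The plan is to run a standard elliptic bootstrap in the space variable, the only subtlety being that the term $\partial_{tt}u$ in \eqref{eqqq}, which a priori costs two time-Sobolev derivatives, can instead be absorbed into an arbitrarily small loss of analyticity width in time. Write $u(t,x)=\sum_{\ell\in\Z}\exp(i\ell t)u_\ell(x)$ with $u_\ell\in\cH^r_x$. Projecting \eqref{eqqq} onto the $\ell$-th time-Fourier mode gives, for every $\ell\in\Z$, the identity $A u_\ell=\omega^2\ell^2 u_\ell-(u^3)_\ell$, i.e. $u_\ell=\omega^2\ell^2 A^{-1}u_\ell-A^{-1}(u^3)_\ell$. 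Since $A=-\Delta_{\mathbb{S}^3}+\Id$ is diagonal on the basis $\{\egen_j\}$ with eigenvalues $\omega_j^2\geq 1$, the operator $A^{-1}$ is an isometry of $\cH^r_x$ onto $\cH^{r+2}_x$, so
$$
\|u_\ell\|_{\cH^{r+2}_x}\leq \omega^2\ell^2\|u_\ell\|_{\cH^r_x}+\|(u^3)_\ell\|_{\cH^r_x}\,.
$$

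Next I would square this pointwise inequality, multiply by $\exp(2\tilde\sigma|\ell|)\langle\ell\rangle^{2s}$, sum over $\ell\in\Z$ and use $(a+b)^2\leq 2a^2+2b^2$. For the first term, the elementary bound $\sup_{x\geq 0}x^4\exp\!\big(-2(\sigma-\tilde\sigma)x\big)=C(\sigma-\tilde\sigma)<\infty$, valid precisely because $\tilde\sigma<\sigma$, yields $\ell^4\exp(2\tilde\sigma|\ell|)\leq C(\sigma-\tilde\sigma)\exp(2\sigma|\ell|)$, whence
$$
\sum_{\ell\in\Z}\exp(2\tilde\sigma|\ell|)\langle\ell\rangle^{2s}\,\omega^4\ell^4\|u_\ell\|^2_{\cH^r_x}\leq C\,\|u\|^2_{\sigma,s,r}<\infty\,.
$$
For the second term, the algebra estimate \eqref{algebra} (applicable since $s>\tfrac12$, $r>\tfrac32$) gives $u^3\in X_{\sigma,s,r}$ with $\|u^3\|_{\sigma,s,r}\leq C(s,r)\|u\|^3_{\sigma,s,r}$, hence $\sum_{\ell\in\Z}\exp(2\tilde\sigma|\ell|)\langle\ell\rangle^{2s}\|(u^3)_\ell\|^2_{\cH^r_x}\leq\|u^3\|^2_{\sigma,s,r}<\infty$. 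Adding the two contributions shows $\sum_{\ell\in\Z}\exp(2\tilde\sigma|\ell|)\langle\ell\rangle^{2s}\|u_\ell\|^2_{\cH^{r+2}_x}<\infty$, that is $u\in X_{\tilde\sigma,s,r+2}$, which is the first assertion.

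Finally, to get smoothness in $x$ I would iterate the gain just proved: fix a strictly decreasing sequence $\sigma=\sigma_0>\sigma_1>\sigma_2>\cdots$ with $\sigma_\infty:=\lim_k\sigma_k>0$ (e.g. $\sigma_k:=\tfrac{\sigma}{2}(1+2^{-k})$), and apply it with $(\sigma,\tilde\sigma,r)$ replaced by $(\sigma_k,\sigma_{k+1},r+2k)$ — legitimate since $r+2k>\tfrac32$ so \eqref{algebra} still applies. This gives $u\in X_{\sigma_k,s,r+2k}$ for every $k\in\N$, hence $u(t,\cdot)\in\bigcap_{k\in\N}\cH^{r+2k}_x$ for every $t$ (the time-Fourier series converges absolutely in each $\cH^{r+2k}_x$ because $s>\tfrac12$). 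Since $\cH^\rho_x$ is the spherically symmetric subspace of $H^\rho(\mathbb{S}^3)$ and $H^\rho(\mathbb{S}^3)\hookrightarrow C^m(\mathbb{S}^3)$ whenever $\rho>m+\tfrac32$, we get $u(t,\cdot)\in C^\infty(\mathbb{S}^3)$; combined with the analyticity in $t$ preserved by $\sigma_\infty>0$, this yields that $u$ is $C^\infty$ (indeed analytic in $t$ and smooth in $x$). I do not expect any genuine obstacle here: the only point requiring care is the time-Sobolev versus analyticity trade-off noted above, and everything else is routine.
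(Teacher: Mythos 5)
Your proof is correct and follows essentially the same bootstrap as the paper: gain two space derivatives via $A^{-1}$, pay for the $\partial_{tt}$ term by shrinking the analyticity width (absorbing $\ell^4$ into the exponential weight, which is exactly the paper's embedding $X_{\sigma,s,r}\hookrightarrow X_{\tilde\sigma,s+2,r}$), control $u^3$ by the algebra estimate, and iterate along a decreasing sequence $\sigma_k$ with positive limit. You carry out the first step mode-by-mode in the time-Fourier expansion rather than at the level of operator norms on the spaces $X_{\sigma,s,r}$, but this is merely a presentational difference, not a different argument.
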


\begin{proof}
Since $u$ is solution of \eqref{eqqq} then $Au=-\omega(\varepsilon)^2\partial_{tt}u-\varepsilon u^3$.
 The algebra estimates \eqref{algebra} and the boundedness of $-\partial_{tt}:X_{\tilde{\sigma},r+2,s}\longrightarrow X_{\tilde{\sigma},r,s}$ imply that
\begin{equation}\label{1diseq}
\|u\|_{\tilde{\sigma},s,r+2}\leq 2\|u\|_{\tilde{\sigma},s+2,r}+\varepsilon C_{s,r}\|u\|^3_{\tilde{\sigma},s,r}.
\end{equation}
Now the embedding $X_{\sigma, s,r}\hookrightarrow X_{\tilde{\sigma},s+2,r}$ is bounded, namely:
\begin{equation}\label{2diseq}
\|u\|_{\tilde{\sigma},s+2,r}\leq C\|u\|_{\sigma,s,r},\quad \bigg(\, C=\max\left\lbrace 1, \sup\limits_{\rho\geq 0}\exp(-\rho(\sigma-\tilde{\sigma})\langle \rho \rangle^{2}	\right\rbrace \bigg).
\end{equation}
Then, by applying \eqref{1diseq} and \eqref{2diseq} one has
\begin{equation}\label{u+reg}
\|u\|_{\tilde{\sigma},s,r+2}\leq 2C\|u\|_{\sigma,s,r}+\varepsilon C_{s,r}\|u\|_{\sigma,s,r}^3<\infty.
\end{equation}
Now one can iterate $m\in \N$ times this procedure by choosing any sequence $\sigma=\sigma_0>\sigma_1>\dots>\sigma_{m}>0$, obtaining $u \in X_{\sigma_m,s,r+2m}$. Since $m$ is arbitrary (and so is the decreasing sequence) one has that $u\in X_{\tilde{\sigma},s,\infty},\, \forall 0<\tilde{\sigma}<\sigma$, namely it is $C^\infty$ also in the space variable up to an arbitrary small shrinking of the strip of analiticity in time, and so the thesis follows.
\end{proof} 
\section{Invertibility of Linearized Operator}\label{invlin}
The aim of this Section is to prove Proposition \ref{inversolinearizzato}. Recalling \eqref{Gammone} we write the operator $\mathfrak{L}_n(\varepsilon,w)$ in \eqref{linop} as
\begin{equation}\label{explin}
\mathfrak{L}_n(\varepsilon,w)=\mathcal{L}_\omega h-\varepsilon P_n\Pi_W(b(t,x) h)-\varepsilon P_n\Pi_W(b(t,x) \partial_w v(w)[h]),
\end{equation}
where we denoted $b(t,x):=(\partial_u f)(w+v(w))=3(w+v(w))^2$. 

The operator 
$\mathcal{L}_{\omega}$ acts diagonally in the time-Fourier basis $\lbrace \exp(i\ell t) \rbrace_{\ell\in \Z}$, and it can be represented with the  operator valued matrix $\mathcal{L}_{\omega}=\operatorname{diag}(\omega^2 \ell^2 -A)_{\ell\in \Z}$, where $A$ is defined in \eqref{def.A}.
\begin{notation}\label{egenmenouno}
In the following we will use the convention $\egen_{-1}:=0$. Given $u\in \cH^0_x$ we denote $\langle u\rangle^\perp:=\left\lbrace \tilde{u} \in \cH^0_x\, :\, \langle u, \tilde{u}\rangle_{\cH^0_x}=0 \right\rbrace$.
\end{notation}
Recalling Definition \ref{anal.sp} we can expand functions in time in the basis of cosines or exponentials. For any $h\in W^{(n)}$ we have  
$$
b(t,x) h(t,x)=\sum\limits_{\ell_1\in \Z} \exp(i\ell_1 t)b_{\ell_1}(x)\sum\limits_{|k|\leq L_n} \exp(ik t)h_{k}(x)=\sum\limits_{\ell\in \Z,\, |k|\leq L_n}\exp(i\ell t) b_{\ell-k}(x)h_k(x) \, .
$$
We denote  $\pi_\ell:\cH^r_x\longmapsto \cH^r_x$ the $\cH^0_x$-orthogonal projector on $\langle\egen_{|\ell|-1}\rangle^\perp$. In view of Notation \ref{egenmenouno} $\pi_{0}=\mathds{1}_{\cH^0_x}$ and  it follows that
$$
P_n\Pi_W(b\,h)(t,x)=\sum\limits_{|\ell|, |k|\leq L_n}\exp(i\ell t) \pi_{\ell}(b_{\ell-k}(x)h_k(x)) \, .
$$
Thus, the linear operator $h\mapsto P_n\Pi_W(b\, h)$ can be represented in time-Fourier basis  
by the operator valued matrix $( \pi_{\ell}(b_{\ell-k}\, \cdot\,\ ))_{\ell,k} $ with indices  $ |\ell|,|k|\leq L_n$. In particular the diagonal entries (which correspond to the values $\ell=k$) are operators of the form $\pi_\ell (b_0\, \cdot\,)$, where $b_0(x)=\frac{1}{2\pi}\int_{0}^{2\pi}b(t,x) dt$ is the average in time of $b(t,x)$. 

It is convenient to split $\mathfrak{L}_n(\varepsilon,w)$ in \eqref{explin} as
\begin{equation}\label{linearizedsum}
\mathfrak{L}_n(\varepsilon,w) =D-\varepsilon\mathcal{M}_1-\varepsilon\mathcal{M}_2 \, ,
\end{equation}
where
\begin{equation}\label{Lpartsi}
D:=\mathcal{L}_\omega-\varepsilon P_n \Pi_n(b_0(x)\cdot)=\operatorname{diag}\left(\underbrace{\omega^2 \ell^2-A-\varepsilon \pi_{\ell}(b_0(x)\cdot)}_{=:D_\ell} \right)_{\ell=-L_n}^{L_n},
\end{equation}
and, denoting $\tilde{b}(t,x):=b(t,x)-b_0(x)$, 
\begin{equation}\label{Lpartsii}
\mathcal{M}_1:= P_n\Pi_W(\tilde{b}(t,x)\cdot)\,, \quad 
\mathcal{M}_2:=P_n\Pi_W\left(b(t,x)\partial_w v(w)[\cdot]\right)\,.
\end{equation}
The operators $D_\ell$ in \eqref{Lpartsi} act between $D_\ell: \cH^2_{x} \cap \langle \egen_{|\ell|-1}\rangle^\perp\longrightarrow \cH^0_{x} \cap \langle \egen_{|\ell|-1}\rangle^\perp$, and satisfy $D_{-\ell}=D_\ell$, for any $ \ell\in \Z $. So we will consider in the following the case $\ell\in \N$.

The operator $D$ is the main diagonal part of the operator $\mathfrak{L}_n (\varepsilon,w)$. We shall prove that   $\mathcal{M}_1$ is the main off-diagonal part, and that $\mathcal{M}_2$ is a remainder term since it contains $\partial_w v(w)$ which gains two derivatives with respect to $w$, see  Proposition \ref{soluzbif}. 
\subsection{Inversion of $D$}
In this section we prove that for $|\varepsilon|$ small enough, the operator  $D$ can be inverted. To this aim we start by proving that each $D_\ell$ can be diagonalized.
We first observe that $D_\ell$ is $\cH^0_x$-selfadjoint and $\langle \egen_{\ell-1}\rangle^{\perp}$-invariant.
We write $D_\ell=\omega^2\ell^2-S_\ell(\varepsilon)$ where
\begin{equation}\label{essekappa}
S_\ell(\varepsilon):=A+\varepsilon \pi_\ell b_0 \pi_\ell,
\end{equation}
and we introduce the bilinear forms
\begin{equation}\label{bilinear}
\langle u_1,u_2 \rangle_{i,\varepsilon}:= \langle S_\ell(\varepsilon)^i u_1,u_2 \rangle_{\cH^0_x} , \quad i=1,2 \, .
\end{equation}
Note that for $\varepsilon=0$ we have $\langle\, \cdot\, ,\, \cdot \, \rangle_{i,0}=\langle\, \cdot\, ,\, \cdot \, \rangle_{\cH^i_x}$.\\
For $|\varepsilon|\leq\frac{1}{2\|b_0\|_{\infty}}$, the bilinear form $\langle\, \cdot\,,\, \cdot\, \rangle_{1,\varepsilon}$ is an inner product on $\cH^1_x \cap  \langle \egen_{\ell-1}\rangle^\perp$, 
equivalent to $\langle\, \cdot\,, \,\cdot\, \rangle_{\cH^1_x} $, because
$$
(1-\varepsilon\|b_0\|_{\infty})\|u\|^2_{\cH^1_x}\leq \|u\|^2_{1,\varepsilon}\leq (1+\varepsilon\|b_0\|_{\infty})\|u\|^2_{\cH^1_x} \, .
$$
In the same way, for $|\varepsilon|\leq\frac{1}{4\|b_0\|_{\infty}}$, the bilinear form 
$\langle\, \cdot\,,\, \cdot\, \rangle_{2,\varepsilon}$ is an inner product on $\cH^2_x \cap  \langle \egen_{\ell-1}\rangle^\perp$, equivalent to $\langle\, \cdot\,, \,\cdot\, \rangle_{\cH^2_x}$, because 
\begin{equation}\label{normeq}
(1-2\varepsilon\|b_0\|_{\infty}-\varepsilon^2\|b_0\|^2_\infty )\|u\|^2_{\cH^2_x} \leq \|u\|^2_{2,\varepsilon} \leq (1+2\varepsilon\|b_0\|_{\infty}+\varepsilon^2\|b_0\|^2_\infty )\|u\|^2_{\cH^2_x}.
\end{equation}
In order to prove that $D_\ell$ can be diagonalized we use the following lemmas.

\begin{lemma}[Sobolev Embedding from the Sphere to the Circle]\label{sob}
For any $r\geq 0$ and $\delta>0$, the embedding 
$\cH^{r+1+\delta}_x \hookrightarrow H^{r}(\mathbb{S}^1,dx)$ is  compact. 
In particular there exists $ c(\delta)>0$ such that
\begin{equation}\label{sobolev}
\|u\|_{H^r(\mathbb{S}^1,d x)}\leq c(\delta) \|u\|_{\cH^{r+1+\delta}_x},\,\, \forall u \in \cH^{r+1+\delta}_x.
\end{equation}
\end{lemma}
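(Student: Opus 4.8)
The plan is to carry out everything in the exponential Fourier basis $\{e^{\ii k x}\}_{k\in\Z}$ on $\mathbb{S}^1=\R/2\pi\Z$, exploiting the closed form of the eigenfunctions $\egen_j$. Setting $z=e^{\ii x}$ and summing a geometric series one obtains the classical identity
\[
\egen_j(x)=\frac{\sin((j+1)x)}{\sin x}=\sum_{m=0}^{j}e^{\ii(j-2m)x}\,,
\]
so, extended evenly and $2\pi$-periodically, $\egen_j$ is a trigonometric polynomial of degree $j$ whose exponential Fourier coefficient equals $1$ at every frequency $k$ with $|k|\le j$, $k\equiv j\ (\mathrm{mod}\ 2)$, and $0$ otherwise. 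Hence for $u=\sum_{j\in\N}u_j\egen_j$ the $k$-th Fourier coefficient is $\widehat u(k)=\sum_{p\ge0}u_{|k|+2p}$, and $\|u\|_{H^r(\mathbb{S}^1,dx)}^2$ equals, up to a fixed constant from the normalization of $dx$, the quantity $\sum_{k\in\Z}\langle k\rangle^{2r}\big|\sum_{p\ge0}u_{|k|+2p}\big|^2$. Note that $\|u\|_{\cH^{s}_x}^2=\sum_j u_j^2(j+1)^{2s}$ by definition, so the statement reduces to a purely arithmetic comparison of these two weighted sums.

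Now set $s:=r+1+\delta$, so that $2s>1$. Applying Cauchy--Schwarz to the inner sum with weight $(|k|+2p+1)^{2s}$ and using the elementary bound $\sum_{p\ge0}(|k|+2p+1)^{-2s}\lesssim_{s}\langle k\rangle^{1-2s}$ (valid precisely because $2s>1$) gives
\[
\Big|\sum_{p\ge0}u_{|k|+2p}\Big|^2\ \lesssim_{s}\ \langle k\rangle^{1-2s}\sum_{p\ge0}u_{|k|+2p}^2\,(|k|+2p+1)^{2s}\,.
\]
Multiplying by $\langle k\rangle^{2r}$, summing over $k\in\Z$, and exchanging the two sums (the weight $u_j^2(j+1)^{2s}$ is counted once for each admissible $k$ with $|k|\le j$, $k\equiv j\ (\mathrm{mod}\ 2)$), one is left, since $2r+1-2s=-1-2\delta$, with the inner sum $\sum_{|k|\le j,\ k\equiv j(2)}\langle k\rangle^{-1-2\delta}\le\sum_{k\in\Z}\langle k\rangle^{-1-2\delta}=:C(\delta)<\infty$, uniformly in $j$. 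This yields exactly $\|u\|_{H^r(\mathbb{S}^1,dx)}\le c(\delta)\|u\|_{\cH^{r+1+\delta}_x}$. It is this double convergence requirement — $2s>1$ for the $p$-sum and $2s-2r>2$ for the $k$-sum — that forces the loss of $1+\delta$ derivatives rather than the naive $\tfrac12$ one would read off by testing on a single $\egen_j$.

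For the compactness statement I would apply the inequality just proved with $r+\tfrac\delta2$ in place of $r$ and $\tfrac\delta2$ in place of $\delta$, obtaining a \emph{bounded} embedding $\cH^{r+1+\delta}_x\hookrightarrow H^{r+\delta/2}(\mathbb{S}^1,dx)$, and then compose it with the classical Rellich compact embedding $H^{r+\delta/2}(\mathbb{S}^1)\hookrightarrow H^{r}(\mathbb{S}^1)$. The only genuinely delicate point throughout is that $\{\egen_j\}$ is very far from orthogonal in $H^r(\mathbb{S}^1)$ — each $\egen_j$ carries all frequencies up to $j$ — so one cannot sum norms term by term; the reorganization in the Fourier basis above, together with the precise bookkeeping of the two convergence thresholds, is the substitute and the crux of the argument.
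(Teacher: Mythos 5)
Your proof is correct, and it is genuinely different in structure from the one in the paper, even though both ultimately hinge on Cauchy--Schwarz together with the convergence of $\sum_{k}\langle k\rangle^{-1-2\delta}$. The paper takes a shortcut: it first bounds $\|\egen_j\|_{H^r(\mathbb{S}^1,dx)}^2 = 2\pi\sum_{m=0}^{j}\langle 2m-j\rangle^{2r}\le 2\pi\,\omega_j^{2r+1}$, then applies the \emph{triangle inequality} $\|u\|_{H^r}\le\sum_j|u_j|\,\|\egen_j\|_{H^r}\le\sqrt{2\pi}\sum_j|u_j|\,\omega_j^{r+1/2}$, and only then uses Cauchy--Schwarz with weights $\omega_j^{\pm(1/2+\delta)}$, giving exactly $c(\delta)=\sqrt{2\pi}\big(\sum_j\omega_j^{-1-2\delta}\big)^{1/2}$. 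You instead work exactly in the Fourier basis of $\mathbb{S}^1$, compute $\widehat u(k)=\sum_{p\ge0}u_{|k|+2p}$, apply Cauchor--Schwarz inside each coefficient, and then exchange the $k$- and $p$-sums using $2r+1-2s=-1-2\delta$. The two choices buy slightly different things: the paper's triangle-inequality route is shorter and avoids any sum exchange, while your route actually exploits the $\ell^2$-orthogonality of $\{e^{\ii kx}\}$ and makes clean why the threshold is $1+\delta$ rather than the naive $\tfrac12$ suggested by testing on a single $\egen_j$ (the $\tfrac12$ accounts for the spread of a single eigenfunction across $\omega_j$ modes, the additional $\tfrac12+\delta$ accounts for summing over $j$). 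Both give the same exponent and essentially the same constant. One more point worth noting: the paper's written proof only establishes the bound \eqref{sobolev} and leaves the compactness assertion implicit, whereas you supply it explicitly by composing the strict gain of $\delta/2$ with the Rellich embedding; that is the right way to close the statement as written.
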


\begin{proof}
We write the eigenfunctions $\egen_j(x)$ in \eqref{def.ej} as 
$$\egen_j(x)=\frac{\sin(\omega_j x)}{\sin x}=\frac{e^{i\omega_j x}-e^{-i\omega_j x}}{e^{i x}-e^{-ix}}=\frac{e^{-i\omega_j x}}{e^{-ix}}\frac{1-e^{i2x\cdot \omega_j}}{1-e^{2ix}}=e^{-ijx}\frac{1-(e^{2ix})^{\omega_j}}{1-e^{2ix}}.$$
Next, by the geometric identity $\sum\limits_{m=0}^n y^m=\frac{1-y^{n+1}}{1-y}$, we obtain
\begin{equation*}\egen_j(x)=e^{-ijx}\sum\limits_{m=0}^{j} e^{i2mx}=\sum\limits_{m=0}^j e^{i(2m-j)x}\,.
\end{equation*}
It follows that $\|\egen_j\|^2_{H^r(\mathbb{S}^1,dx)}=2\pi\sum\limits_{m=0}^{j}\langle 2m-j\rangle^{2r}\leq 2\pi \sum\limits_{m=0}^{j} \omega_j^{2r}=2\pi\omega_j^{2r+1}=2\pi\|\egen_j\|^2_{\cH^{r+\frac{1}{2}}_x}$.\\
Given $r\geq 0,$ $\delta>0$ and $ u=\sum\limits_{j\in\N}u_j\egen_j(x) \in \cH^{r+1+\delta}_x$ we obtain
\begin{align*}
&\|u\|_{H^r(\mathbb{S}^1,dx)}\leq \sum\limits_{j\in\N}|u_j|\|\egen_j\|_{H^r(\mathbb{S}^1,dx)}\leq \sqrt{2\pi}\sum\limits_{j\in\N} |u_j|\omega_j^{r+\frac{1}{2}}\\
&\leq \sqrt{2\pi}\left( \sum\limits_{j\in\N} |u_j|^2\omega_j^{2(r+\frac{1}{2}+\frac{1}{2}+\delta)}\right)^{\frac{1}{2}}\cdot \left(\sum\limits_{j\in\N} \omega_j^{-1-2\delta}\right)^{\frac{1}{2}}
\end{align*}
which proves \eqref{sobolev} with  $c(\delta) :=\sqrt{2\pi}\left(\sum\limits_{j\in\N} \omega_j^{-1-2\delta}\right)^{\frac{1}{2}}$.
\end{proof}

\begin{lemma}[Matrix elements of multiplication]\label{elements}
Let  $r\geq 0$, $\delta>0$ and 
let $b_0 (x) $ be a function in $ \cH^{r+1+\delta}_x$.
Then, for any $ j, k \in \N $, 
$$
|\langle b_0 \egen_j, \egen_k\rangle_{\cH^0_x}|\leq c(\delta)
\|b_0\|_{\cH_x^{r+1+\delta}}\left(\frac{1}{\langle k-j \rangle^{r}}+\frac{1}{|\omega_k+\omega_j|^{r}}\right).
$$
In particular, when $j=k$ 
\begin{equation}\label{diagonal}
\langle b_0\egen_j,\egen_j \rangle_{\cH^0_x}=\frac{1}{\pi}\int\limits_0^{\pi}b_0(x)dx +r_j(b_0),\quad \quad |r_j(b_0)|\leq c(\delta) \frac{\|b_0\|_{\cH_x^{r+1+\delta}}}{(2\omega_j)^r}.
\end{equation}
\end{lemma}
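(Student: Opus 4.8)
The plan is to reduce the matrix elements $\langle b_0\egen_j,\egen_k\rangle_{\cH^0_x}$ to Fourier coefficients of the periodic extension of $b_0$ on the flat circle $\mathbb{S}^1$, and then to invoke the Sobolev embedding Lemma \ref{sob}, which is exactly designed for this purpose. (An alternative route expanding $b_0=\sum_i b_{0,i}\egen_i$ and using the product rule \eqref{prodotto} together with Cauchy--Schwarz against the $\omega_i$-weights is also feasible, but the Fourier computation is cleaner.)

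First I would use the explicit formula \eqref{def.ej}. Since $\egen_j(x)\egen_k(x)\sin^2 x=\sin(\omega_j x)\sin(\omega_k x)$ and $\dbar x=\frac{2}{\pi}dx$, the product-to-sum identity $\sin(\omega_j x)\sin(\omega_k x)=\frac12\bigl[\cos((\omega_j-\omega_k)x)-\cos((\omega_j+\omega_k)x)\bigr]$ gives
$$\langle b_0\egen_j,\egen_k\rangle_{\cH^0_x}=\frac{1}{\pi}\int_0^\pi b_0(x)\cos((j-k)x)\,dx-\frac{1}{\pi}\int_0^\pi b_0(x)\cos((j+k+2)x)\,dx.$$
Because $b_0=\sum_i b_{0,i}\egen_i\in\cH^{r+1+\delta}_x$ and each $\egen_i$ extends (as computed in the proof of Lemma \ref{sob}) to an even trigonometric polynomial on $\mathbb{S}^1$, the function $b_0$ extends to an even $2\pi$-periodic function, and the two integrals above are precisely its Fourier coefficients $\widehat{b_0}(j-k)$ and $\widehat{b_0}(j+k+2)$, where $\widehat{b_0}(n):=\frac{1}{2\pi}\int_{-\pi}^\pi b_0(x)e^{-inx}dx=\frac{1}{\pi}\int_0^\pi b_0(x)\cos(nx)dx$ by evenness. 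Hence $\langle b_0\egen_j,\egen_k\rangle_{\cH^0_x}=\widehat{b_0}(j-k)-\widehat{b_0}(j+k+2)$.

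Next I would bound a single Fourier coefficient: with the normalization used in Lemma \ref{sob} one has $\|b_0\|_{H^r(\mathbb{S}^1,dx)}^2=2\pi\sum_{n\in\Z}\langle n\rangle^{2r}|\widehat{b_0}(n)|^2$, so $|\widehat{b_0}(n)|\leq(2\pi)^{-1/2}\langle n\rangle^{-r}\|b_0\|_{H^r(\mathbb{S}^1,dx)}\leq c(\delta)\langle n\rangle^{-r}\|b_0\|_{\cH^{r+1+\delta}_x}$, using \eqref{sobolev} and $\sqrt{2\pi}>1$. Taking $n=j-k$ and $n=j+k+2$, and using $\langle k-j\rangle=\langle j-k\rangle$ together with $\langle j+k+2\rangle\geq j+k+2=\omega_k+\omega_j=|\omega_k+\omega_j|$, gives the first estimate of the Lemma. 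For the diagonal case $j=k$ the same identity reads $\langle b_0\egen_j,\egen_j\rangle_{\cH^0_x}=\widehat{b_0}(0)-\widehat{b_0}(2j+2)$ with $\widehat{b_0}(0)=\frac1\pi\int_0^\pi b_0(x)dx$, so setting $r_j(b_0):=-\widehat{b_0}(2j+2)$ and applying the coefficient bound with $n=2j+2=2\omega_j$ gives $|r_j(b_0)|\leq c(\delta)(2\omega_j)^{-r}\|b_0\|_{\cH^{r+1+\delta}_x}$, which is \eqref{diagonal}.

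I do not anticipate a real obstacle: the argument is a short computation. The only points requiring care are the bookkeeping of the measure normalizations (the weight $\dbar x=\frac{2}{\pi}dx$ in $\cH^0_x$ versus the $\frac{1}{2\pi}$ in the definition of Fourier coefficients, which is why only a clean factor $\frac1\pi$ survives), and checking that the periodic extension of $b_0$ is genuinely even so that the cosine integrals over $[0,\pi]$ are honest Fourier coefficients on $\mathbb{S}^1$ — both of which follow at once from \eqref{def.ej}.
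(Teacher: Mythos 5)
Your proof is correct and follows essentially the same route as the paper's: reduce the $\cH^0_x$-pairing to the flat-circle integrals $\frac{1}{\pi}\int_0^\pi b_0\cos((j-k)x)\,dx$ and $\frac{1}{\pi}\int_0^\pi b_0\cos((j+k+2)x)\,dx$ via \eqref{def.ej} and the product-to-sum identity, then apply the Sobolev embedding Lemma \ref{sob}. The paper phrases the final step as a duality pairing $|\langle b_0,\cos(n\cdot)\rangle_{L^2(\mathbb{S}^1)}|\leq\|b_0\|_{H^r}\|\cos(n\cdot)\|_{H^{-r}}$ rather than your explicit Fourier-coefficient bound $|\widehat{b_0}(n)|\lesssim\langle n\rangle^{-r}\|b_0\|_{H^r}$, but these are the same estimate.
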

\begin{proof}
Using \eqref{equispazi} and the explicit representation of the eigenfunction \eqref{def.ej} we have
$$\langle b_0 \egen_j, \egen_k\rangle_{\cH^0_x}=\frac{2}{\pi}\int\limits_{0}^{\pi}b_0(x)\sin(\omega_jx)\sin(\omega_k x)dx.$$
Using the identity $\sin(\omega_jx)\sin(\omega_k x)=\frac{1}{2}\left(\cos(|k-j|x)-\cos((\omega_j+\omega_k)x)\right)$, and the inequality\\
$\left|\langle b_0,b_1\rangle_{L^{2}(\mathbb{S}^1,dx)}\right|\leq \|b_0\|_{H^r(\mathbb{S}^1)}\|b_1\|_{H^{-r}(\mathbb{S}^1)}$, with $b_1=\cos((\omega_j+\omega_k)x),\ \cos(|k-j|x)$ we obtain
$$|\langle b_0 \egen_j, \egen_k\rangle_{\cH^0_x}|\leq \|b_0\|_{H^r(\mathbb{S}^1,dx)}\left(\frac{1}{\langle k-j\rangle^{r}}+\frac{1}{(\omega_j+\omega_k)^r}\right)\stackrel{\eqref{sobolev}}\leq c(\delta)\|b_0\|_{\cH_x^{r+1+\delta}}\left(\frac{1}{\langle k-j\rangle^{r}}+\frac{1}{(\omega_j+\omega_k)^r}\right).
$$
In particular when $j=k$ we have $\sin(\omega_jx)^2=\frac{1}{2}-\frac{\cos(2\omega_jx)}{2}$ and so \eqref{diagonal} holds.
\end{proof}

\begin{definition}
For any $ \ell \in \N $, let define the Hilbert space
\begin{equation}\label{effelle}
F_\ell:= \cH^2_x\cap \langle\egen_{\ell-1}\rangle^{\bot}
\end{equation}
endowed with the inner product $\langle \cdot ,\cdot \rangle_{2,\varepsilon}$  in \eqref{bilinear}.
\end{definition}
\begin{proposition}[Sturm-Liouville] \label{sturm}
There exists $\varepsilon_0>0$ such that for any $ |\varepsilon|<\varepsilon_0$, there exist eigencouples $\lbrace \left(\phi_{\ell,j}(\varepsilon),  \lambda_{\ell,j}(\varepsilon)\right) \rbrace_{j\in \mathbb{N}, \omega_j \neq \ell}$, diagonalizing the operator $S_\ell(\varepsilon)$ defined in \eqref{essekappa}, and such that $\lbrace \phi_{\ell,j}(\varepsilon)\rbrace_{j\in\mathbb{N},\omega_j\neq \ell}$ are $F_\ell$-orthonormal and $\cH^0_x$-orthogonal. Furthermore, for any $ \delta \in ]0,1]$, there exists a constant $C:=C(\delta)$ such that the eigenvalues $\lambda_{\ell,j}(\varepsilon)$ satisfy
\begin{equation}\label{derlam}
\left|\lambda_{\ell,j}(\varepsilon)-\omega_j^2- \varepsilon\frac{1}{\pi}\int\limits_0^\pi b_0(x) dx\right|\leq C\frac{\varepsilon\|b_0\|_{\cH^2_x}}{\omega_j^{1-\delta}} \, .
\end{equation}
\end{proposition}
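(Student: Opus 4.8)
The plan is to treat the operator $S_\ell(\varepsilon)=A+\varepsilon\,\pi_\ell b_0\pi_\ell$ of \eqref{essekappa} as a bounded self-adjoint perturbation, of $\cH^0_x$-operator norm $\le\varepsilon\|b_0\|_\infty$, of $A$ restricted to its invariant subspace $\cH^0_x\cap\langle\egen_{\ell-1}\rangle^\perp$. On that subspace $A$ is self-adjoint with compact resolvent and has simple eigenvalues $\{\omega_j^2:j\in\N,\ j\neq\ell-1\}$ (eigenfunctions $\egen_j$), mutually separated by at least $\omega_{j+1}^2-\omega_j^2=2j+3\ge3$. I would fix $\rho_0=1$: the discs $\{|z-\omega_j^2|<\rho_0\}$ are pairwise disjoint, $\|(z-A)^{-1}\|\le1$ on their boundary circles, and if $\varepsilon_0\|b_0\|_\infty<1$ the Neumann series shows $(z-S_\ell(\varepsilon))^{-1}$ is holomorphic across each circle; a standard Riesz-projection/Rouch\'e argument then yields exactly one simple eigenvalue $\lambda_{\ell,j}(\varepsilon)$ in each disc, with one-dimensional spectral projector $P_{\ell,j}(\varepsilon)$ satisfying $\|P_{\ell,j}(\varepsilon)-P_{\ell,j}(0)\|\lesssim\varepsilon\|b_0\|_\infty$, where $P_{\ell,j}(0)$ is the rank-one $\cH^0_x$-orthogonal projector onto $\egen_j$. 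Since $\mathrm{spec}(S_\ell(\varepsilon))$ lies in the $(\varepsilon\|b_0\|_\infty)$-neighbourhood of $\mathrm{spec}(A|_{\langle\egen_{\ell-1}\rangle^\perp})$, these exhaust the spectrum. Setting $\phi_{\ell,j}(\varepsilon):=P_{\ell,j}(\varepsilon)\egen_j/\|P_{\ell,j}(\varepsilon)\egen_j\|_{2,\varepsilon}$ (real, as everything in sight is real), these functions lie in $\cH^2_x$ by elliptic regularity, are $\cH^0_x$-orthogonal (distinct eigenvalues of a self-adjoint operator), are complete, and are $F_\ell$-orthonormal since $\langle\phi_{\ell,i},\phi_{\ell,j}\rangle_{2,\varepsilon}=\lambda_{\ell,i}^2\langle\phi_{\ell,i},\phi_{\ell,j}\rangle_{\cH^0_x}$ with $\lambda_{\ell,j}\ge\omega_j^2-\varepsilon\|b_0\|_\infty>0$. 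The crude bound $|\lambda_{\ell,j}(\varepsilon)-\omega_j^2|\le\varepsilon\|b_0\|_\infty$ also drops out of this discussion.

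The real content is the refined estimate \eqref{derlam}. Normalizing $\phi:=\phi_{\ell,j}(\varepsilon)$ with $\|\phi\|_{\cH^0_x}=1$ and $\alpha:=\langle\phi,\egen_j\rangle_{\cH^0_x}>0$ — so $\alpha\ge\tfrac12$ for $\varepsilon_0$ small by the projector estimate, and $\pi_\ell\phi=\phi$, $\pi_\ell\egen_j=\egen_j$ because $\phi\perp\egen_{\ell-1}$ and $j\neq\ell-1$ — I would test $S_\ell(\varepsilon)\phi=\lambda_{\ell,j}\phi$ against $\egen_j$ and use $A\egen_j=\omega_j^2\egen_j$ to obtain
\[
\lambda_{\ell,j}(\varepsilon)=\omega_j^2+\varepsilon\,\langle b_0\egen_j,\egen_j\rangle_{\cH^0_x}+\frac{\varepsilon}{\alpha}\,\langle \phi-\alpha\egen_j,\,b_0\egen_j\rangle_{\cH^0_x}.
\]
The diagonal matrix element is controlled by \eqref{diagonal} applied with $r=1-\delta$ (so $r+1+\delta=2$): it equals $\tfrac1\pi\int_0^\pi b_0\,dx+r_j(b_0)$ with $|r_j(b_0)|\lesssim_\delta\|b_0\|_{\cH^2_x}\,\omega_j^{-(1-\delta)}$. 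For the last term, expand $\phi-\alpha\egen_j=\sum_{k\neq j,\,\ell-1}\phi_k\egen_k$; projecting the eigenvalue equation onto $\egen_k$ gives $\phi_k=\varepsilon\langle b_0\phi,\egen_k\rangle_{\cH^0_x}/(\lambda_{\ell,j}-\omega_k^2)$, and combining $|\lambda_{\ell,j}-\omega_j^2|\le\varepsilon\|b_0\|_\infty$ with the exact gap identity $|\omega_k^2-\omega_j^2|=|k-j|(\omega_j+\omega_k)\ge|k-j|\,\omega_j\ge3$ yields $|\phi_k|\le 2\varepsilon\,|\langle b_0\phi,\egen_k\rangle_{\cH^0_x}|/(|k-j|\,\omega_j)$ for $\varepsilon_0$ small. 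Plugging this in, pulling out $2\varepsilon/\omega_j$ and applying Cauchy--Schwarz with weights $|k-j|^{-1/2}$ leaves the two factors $\big(\sum_{k}|\langle b_0\phi,\egen_k\rangle_{\cH^0_x}|^2\big)^{1/2}\le\|b_0\phi\|_{\cH^0_x}\le\|b_0\|_\infty$ and $\big(\sum_{k\neq j}|\langle b_0\egen_j,\egen_k\rangle_{\cH^0_x}|^2/|k-j|\big)^{1/2}$, the latter being $\lesssim_\delta\|b_0\|_{\cH^2_x}$ by Lemma \ref{elements} with $r=1-\delta$ (use $\omega_j+\omega_k>\langle k-j\rangle$ to fold the two matrix-element terms into one, and $\sum_{m\ge1}m^{-(3-2\delta)}<\infty$ since $\delta<1$). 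Hence the last term is $\lesssim_\delta\varepsilon^2\|b_0\|_{\cH^2_x}^2/\omega_j\le(\varepsilon_0\|b_0\|_{\cH^2_x})\,\varepsilon\|b_0\|_{\cH^2_x}\,\omega_j^{-(1-\delta)}$, and absorbing the factor $\varepsilon_0\|b_0\|_{\cH^2_x}\lesssim1$ gives \eqref{derlam}. The borderline $\delta=1$ needs no argument: there $\omega_j^{1-\delta}=1$ and \eqref{derlam} reduces to the crude bound already obtained.

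I expect the main obstacle to be precisely this second step — showing that the $O(\varepsilon^2)$ correction beyond the mean term $\tfrac{\varepsilon}{\pi}\int_0^\pi b_0\,dx$ decays like $\omega_j^{-(1-\delta)}$. This is the only place where the geometry of $\mathbb S^3$ really enters: the off-diagonal decay of the matrix of multiplication by $b_0$ from Lemma \ref{elements} (which itself rests on the Sphere-to-Circle embedding of Lemma \ref{sob}) must be paired with the \emph{linear} growth of the eigenvalue gaps of $A$ in the site distance, $|\omega_k^2-\omega_j^2|=|k-j|(\omega_j+\omega_k)$, so that the resulting series converges and carries the extra power $\omega_j^{-1}$. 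Everything else — existence, completeness, orthogonality and normalization of the eigencouples — is a routine application of analytic perturbation theory, made easy by the large ($\ge3$) spectral gaps of $A$ on the subspaces $\langle\egen_{\ell-1}\rangle^\perp$. (The standing input that lets us apply \eqref{diagonal} with $r=1-\delta\le1$ is that $b_0\in\cH^2_x$, which holds because $b_0$ is the time-average of $b=3(w+v(w))^2\in X_{\sigma,s,2}$, by the algebra property \eqref{algebra} and Proposition \ref{soluzbif}.)
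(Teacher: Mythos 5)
Your proof is correct, but it follows a genuinely different route from the paper's. The paper casts the problem in terms of the inverse $K_\ell(\varepsilon)=S_\ell(\varepsilon)^{-1}$ (compact, self-adjoint), invokes classical analytic perturbation theory to get smooth eigencouples $(\tilde\phi_{\ell,j}(\varepsilon),\lambda_{\ell,j}(\varepsilon))$, and then writes the Taylor expansion with integral remainder $\lambda_{\ell,j}(\varepsilon)=\omega_j^2+\varepsilon\,\partial_\varepsilon\lambda_{\ell,j}(0)+\int_0^\varepsilon\!\int_0^\nu\partial_\varepsilon^2\lambda_{\ell,j}$. The first derivative is $\langle b_0\egen_j,\egen_j\rangle_{\cH^0_x}$, controlled via Lemma~\ref{elements}/\eqref{diagonal} exactly as you do; the second derivative is bounded by $4\|b_0\|_\infty^2/\omega_j$ via $\partial_\varepsilon^2\lambda=2\langle \pi_\ell b_0\pi_\ell\tilde\phi,\partial_\varepsilon\tilde\phi\rangle$ together with an estimate of $\|\partial_\varepsilon\tilde\phi_{\ell,j}\|_{\cH^0_x}\le 2\|b_0\|_\infty/\omega_j$ obtained by differentiating the resolvent identity and using the spectral gap. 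You instead bypass eigenvalue-perturbation calculus entirely: after the Riesz-projection step you test the eigenvalue equation against $\egen_j$ to get $\lambda=\omega_j^2+\varepsilon\langle b_0\egen_j,\egen_j\rangle+\tfrac{\varepsilon}{\alpha}\langle b_0(\phi-\alpha\egen_j),\egen_j\rangle$, and control the last term by projecting onto $\egen_k$, solving for the off-diagonal components $\phi_k=\varepsilon\langle b_0\phi,\egen_k\rangle/(\lambda-\omega_k^2)$, and combining the gap $|\omega_k^2-\omega_j^2|=|k-j|(\omega_j+\omega_k)$ with the decay of Lemma~\ref{elements} via Cauchy--Schwarz. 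Both methods extract the essential $\omega_j^{-1}$ gain for the quadratic remainder from the linear growth of the eigenvalue gaps; yours is more elementary and self-contained (no need to differentiate the resolvent or the eigenfunctions), while the paper's is shorter once one is willing to quote the standard $\partial_\varepsilon\lambda,\partial_\varepsilon^2\lambda$ formulas. Your version also uses Lemma~\ref{elements} twice (diagonal and off-diagonal), whereas the paper needs only the diagonal estimate \eqref{diagonal} and a crude $\|b_0\|_\infty^2$ bound for the second-order term.

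Two cosmetic slips that do not affect the argument: the chain $|\omega_k^2-\omega_j^2|=|k-j|(\omega_j+\omega_k)\ge|k-j|\omega_j\ge 3$ is wrong at the last step (take $j=0$, $k=1$); what you actually need, and what holds, is $|\omega_k^2-\omega_j^2|\ge 3|k-j|\ge 3$ together with $|\omega_k^2-\omega_j^2|\ge|k-j|\omega_j$. Also, after the Cauchy--Schwarz step the first factor should carry the weight $|k-j|^{-1/2}$ as well; you are implicitly discarding it using $|k-j|\ge 1$, which is fine but deserves a word.
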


\begin{proof}
The operator $S_\ell(\varepsilon)$ is $\cH^0_x$-selfadjoint, and can be written as $S_\ell(\varepsilon)=A(\mathds{1}+\varepsilon A^{-1}\pi_\ell b_0 \pi_\ell)$. 
Since $\|A^{-1}\phi\|_{\cH^0_x}\leq \|\phi\|_{\cH^0_x}\,\, \forall \phi\in \cH^0_x$, we can invert $S_\ell(\varepsilon)$ for $|\varepsilon|< \frac{1}{\|b_0\|_\infty}$,
obtaining the convergent Neumann series 
\begin{equation}\label{inversoS}
K_\ell(\varepsilon):=S_\ell(\varepsilon)^{-1}=(\mathds{1}+\varepsilon A^{-1}\pi_\ell b_0\pi_\ell)^{-1}A^{-1}=\sum\limits_{p=0}^\infty(-\varepsilon A^{-1}\pi_\ell b_0 \pi_\ell)^{p}A^{-1} \, .
\end{equation}
The operator  $K_\ell(\varepsilon)$ is $\cH^0_x$-selfadjoint and compact, since $A^{-1}$ is compact (its eigenvalues are $\omega_j^{-2}$). Thus, by standard spectral theory, the operator
$S_\ell(\varepsilon)$ can be diagonalized with $\cH^0_x$-orthonormal eigenfuncitons $\left\lbrace \tilde{\phi}_{\ell,j}(\varepsilon)\right\rbrace_{\omega_j\neq \ell}$ with corresponding eigenvalues $\left\lbrace\lambda_{\ell,j}(\varepsilon)\right\rbrace_{\omega_j\neq \ell}$.\\
By rescaling $\phi_{\ell,j}(\varepsilon):=\left(\lambda_{\ell,j}(\varepsilon)\right)^{-1}\tilde{\phi}_{\ell,j}(\varepsilon)$, we obtain a $\cH^0_x$-orthogonal and $\langle \cdot, \cdot \rangle_{2,\varepsilon}$-orthonormal basis. 
By classical spectral theory we also know that for $|\varepsilon|$ small enough,  the eigenvalues of $S_\ell(\varepsilon)$ are still simple since $A$ has simple spectrum, moreover the eigenvalues $\lambda_{\ell,j}(\varepsilon)$ and the eigenfunctions $\tilde{\phi}_{\ell,j}(\varepsilon)$ are smooth in $\varepsilon$ and the following formulas hold:
\begin{equation}\label{autoval}
\begin{aligned}
&\lambda_{\ell,j}(\varepsilon)=\langle S_\ell(\varepsilon) \tilde{\phi}_{\ell,j}(\varepsilon),\tilde{\phi}_{\ell,j}(\varepsilon)\rangle_{\cH^0_x} \, ,\\
&\partial_\varepsilon\lambda_{\ell,j}(\varepsilon)=\langle \pi_\ell b_0 \pi_\ell\tilde{\phi}_{\ell,j}(\varepsilon),\tilde{\phi}_{\ell,j}(\varepsilon)\rangle_{\cH^0_x} \, ,\\
&\partial_\varepsilon^2\lambda_{\ell,j}(\varepsilon)=2\langle \pi_\ell b_0 \pi_\ell\tilde{\phi}_{\ell,j}(\varepsilon),\partial_\varepsilon\tilde{\phi}_{\ell,j}(\varepsilon)\rangle_{\cH^0_x} \, .
\end{aligned}
\end{equation}
Evaluating at $\varepsilon=0$ we have $\partial_\varepsilon\lambda_{\ell,j}(0)=\langle b_0\egen_j,\egen_j\rangle_{\cH^0_x}$ which we estimated in \eqref{diagonal}.
Thus, since by Taylor remainder formula we have
\begin{equation}\label{taylor}
\lambda_{\ell,j}(\varepsilon)=\omega_j^2+\varepsilon (\partial_{\varepsilon}\lambda_{\ell,j})(0)+\int\limits_0^\varepsilon\int\limits_0^\nu (\partial_\varepsilon^2\lambda_{\ell,j})(\tilde{\nu})d\tilde{\nu}d\nu,
\end{equation}
we are left to estimate the second derivative term in \eqref{taylor} in order to prove \eqref{derlam}. According to \eqref{autoval} we need to estimate $\|\partial_\varepsilon\tilde{\phi}_{\ell,j}(\varepsilon)\|_{\cH^0_x}$ in order to obtain a bound for $\partial_\varepsilon^2\lambda_{\ell,j}(\varepsilon)$.\\
We denote as $P_\varepsilon$ the $\cH^0_x$-orthogonal projector on $\langle\tilde{\phi}_{\ell,j}(\varepsilon)\rangle^\perp$, and $\mu_{\ell,j}(\varepsilon):=\lambda_{\ell,j}(\varepsilon)^{-1}$.\\
We differentiate the identity $ (K_\ell(\varepsilon)-\mu_{\ell,j}(\varepsilon))\tilde{\phi}_{\ell,j}(\varepsilon)=0$, where $K_\ell(\varepsilon)$ is defined in \eqref{inversoS}, and we obtain
\begin{equation}\label{acaso2}
(K_\ell(\varepsilon)-\mu_{\ell,j}(\varepsilon))\partial_\varepsilon\tilde{\phi}_{\ell,j}(\varepsilon)=-(\partial_\varepsilon K_\ell(\varepsilon)-\partial_\varepsilon\mu_{\ell,j}(\varepsilon))\tilde{\phi}_{\ell,j}(\varepsilon) \, .
\end{equation}
Then we apply the projector $P_\varepsilon$, which commutes with $K_\ell(\varepsilon)-\mu_{\ell,j}(\varepsilon)$, to both sides of \eqref{acaso2}, and using $\langle \partial_\varepsilon\tilde{\phi}_{\ell,j}(\varepsilon),\tilde{\phi}_{\ell,j}(\varepsilon)\rangle_{\cH^0_x}=0$ (which is equivalent to $P_\varepsilon\partial_\varepsilon\tilde{\phi}_{\ell,j}=\partial_\varepsilon\tilde{\phi}_{\ell,j}(\varepsilon)$) and that $K_\ell(\varepsilon)-\mu_{\ell,j}(\varepsilon)$ is invertible on $\operatorname{Rg}(P_\varepsilon)$ we obtain
$$
\partial_\varepsilon \tilde{\phi}_{\ell,j}(\varepsilon)=-(K_\ell(\varepsilon)-\mu_{\ell,j}(\varepsilon))^{-1}P_\varepsilon (\partial_\varepsilon K_\ell)(\varepsilon)\tilde{\phi}_{\ell,j}(\varepsilon) \, .
$$
Since $K_\ell(\varepsilon)=S_{\ell}(\varepsilon)^{-1}$, we have that $\partial_\varepsilon K_\ell(\varepsilon)=-K_\ell(\varepsilon)\partial_\varepsilon S_\ell(\varepsilon)K_\ell(\varepsilon)$, 
and it follows
\begin{equation}\label{derauto}
\partial_\varepsilon\tilde{\phi}_{\ell,j}(\varepsilon)=\mu_{\ell,j}(\varepsilon)(K_\ell(\varepsilon)-\mu_{\ell,j}(\varepsilon))^{-1}P_\varepsilon K_\ell(\varepsilon)\pi_\ell b_0 \pi_\ell
 \tilde{\phi}_{\ell,j}(\varepsilon)=T\pi_\ell b_0\pi_\ell \tilde{\phi}_{\ell,j}(\varepsilon),
\end{equation}
where we denoted $T:=\mu_{\ell,j}(\varepsilon)(K_\ell(\varepsilon)-\mu_{\ell,j}(\varepsilon))^{-1}P_\varepsilon K_\ell(\varepsilon)$.\\
For any $v:=\sum\limits_{k \neq j}\hat{v}_k \tilde{\phi}_{\ell,k}(\varepsilon)$ we have
$$
Tv=\sum\limits_{k \neq j} \frac{\hat{v}_k}{\lambda_{\ell,j}(\varepsilon)-\lambda_{\ell,k}(\varepsilon)}\tilde{\phi}_{\ell,k}(\varepsilon) \, .
$$
Recalling \eqref{autoval}, we see that $|\partial_\varepsilon \lambda_{\ell,j}(\varepsilon)|\leq \|b_0\|_{\infty}$, and so for $\varepsilon\leq \frac{1}{4\|b_0\|_\infty}$ we have:
\begin{align*}
&|\lambda_{\ell,k}(\varepsilon)-\lambda_{\ell,j}(\varepsilon)|\geq |\lambda_{\ell,k}(0)-\lambda_{\ell,j}(0)|-|\lambda_{\ell,k}(\varepsilon)-\lambda_{\ell,k}(0)|-|\lambda_{\ell,j}(\varepsilon)-\lambda_{\ell,j}(0)|\\
&\geq |\omega_k^2-\omega_j^2|-2\varepsilon \|b_0\|_\infty\geq \frac{1}{2}|\omega_k+\omega_j|,\quad\quad \forall k\neq j \, .
\end{align*}
Hence we have $\|T\|_{B(\cH^0_x)}\leq \frac{2}{\omega_j}$, and by \eqref{derauto} we obtain
$$
\|\partial_\varepsilon \tilde{\phi}_{\ell,j}(\varepsilon)\|_{\cH^0_x}\leq \|T(\pi_\ell b_0 \pi_\ell \tilde{\phi}_{\ell,j}(\varepsilon))\|_{\cH^0_x}\leq \frac{2}{\omega_j}\|b_0\|_\infty \, .
$$
Plugging this inequality into \eqref{autoval} we obtain
\begin{equation}\label{dersec}
|\partial_\varepsilon^2\lambda_{\ell,j}(\varepsilon)|\leq \frac{4\|b_0\|^2_\infty}{\omega_j}.
\end{equation}
Finally, we plug \eqref{diagonal} and \eqref{dersec} into \eqref{taylor} and we obtain
$$\left|\lambda_{\ell,j}(\varepsilon)-\omega_j^2-\varepsilon\frac{1}{\pi}\int\limits_0^\pi b_0(x)dx\right|\leq 4\varepsilon^2\frac{\|b_0\|^2_\infty}{\omega_j}+ c(\delta)\varepsilon \frac{\|b_0\|_{\cH^2_x}}{\omega_j^{1-\delta}}\leq 2c(\delta)\varepsilon \frac{\|b_0\|_{\cH^2_x}}{\omega_j^{1-\delta}} \, ,
$$
which proves \eqref{derlam} with  $C:=2c(\delta)$.
\end{proof}
%
In order to prove invertibility of $D_\ell$ we prove the following estimates of the small divisors.
\begin{lemma}\label{alfakappa}For any $\gamma \in \left]0,\frac{1}{6}\right[$, and $\tau \in ]1,2[$, there exists $\varepsilon_0 :=\varepsilon_0(\gamma,\tau)>0$ small enough such that, for any $\varepsilon \in G_n(w)\subseteq [0,\varepsilon_0]$ defined in \eqref{cantorAn}, and $\omega$ as in \eqref{amplfreq} then
\begin{equation}\label{alfa}
\alpha_\ell:=\min\limits_{\stackrel{j\in\N}{\omega_j\neq \ell}} |\omega^2\ell^2-\lambda_{\ell,j}(\varepsilon)|\geq \frac{\gamma}{ 20\langle \ell\rangle ^{\tau-1}} \, , \ \forall \ell\in \Z \, .
\end{equation}
\end{lemma}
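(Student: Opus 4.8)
The plan is to bound $|\omega^2\ell^2-\lambda_{\ell,j}(\varepsilon)|$ from below uniformly in $j$, splitting the indices $j$ into a non-resonant régime handled by elementary means and a potentially resonant one handled by the first Melnikov inequality in \eqref{cantorAn}. Since $D_{-\ell}=D_\ell$ (hence $\lambda_{-\ell,j}=\lambda_{\ell,j}$) it suffices to treat $\ell\in\N$, and since $D$ acts on $W^{(n)}$ we may assume $\ell\le L_n$. The starting point is that, by Proposition~\ref{sturm} (using that $\|b_0\|_{\cH^2_x}$ is bounded, because $\|w\|\le\rho$ and $v(w)\in\cH^2_x$ is bounded, and that the constant $M(w)$ in \eqref{cantorAn} equals $\frac1\pi\int_0^\pi b_0(x)\,dx$, which is the $\varepsilon$-linear coefficient of $\lambda_{\ell,j}$ up to an error of the size below), for any fixed $\delta\in(0,1]$ one has
\[
\omega^2\ell^2-\lambda_{\ell,j}(\varepsilon)=(\omega\ell-\omega_j)(\omega\ell+\omega_j)-\varepsilon M(w)+\rho_{\ell,j},\qquad |\rho_{\ell,j}|\le \frac{C\varepsilon}{\omega_j^{1-\delta}}.
\]

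\emph{Non-resonant case: $\ell<\frac1{3\varepsilon}$, or $\ell\ge\frac1{3\varepsilon}$ with $|\omega\ell-\omega_j|\ge\frac12$.} If $\ell<\frac1{3\varepsilon}$ then, since $\ell,\omega_j\in\N$ with $\ell\ne\omega_j$ and $\omega=\sqrt{1+\varepsilon}$, one gets $|\omega\ell-\omega_j|\ge|\ell-\omega_j|-\frac{\varepsilon}{\omega+1}\ell\ge 1-\frac16=\frac56$. In either sub-case $|\omega^2\ell^2-\omega_j^2|=|\omega\ell-\omega_j|(\omega\ell+\omega_j)\ge\frac12$, because $\omega\ell+\omega_j\ge\omega_j\ge1$; hence, for $\varepsilon_0$ small, $|\omega^2\ell^2-\lambda_{\ell,j}(\varepsilon)|\ge\frac12-\varepsilon|M(w)|-|\rho_{\ell,j}|\ge\frac14\ge\frac{\gamma}{20}\ge\frac{\gamma}{20\langle\ell\rangle^{\tau-1}}$, since $\gamma<\frac16$ and $\langle\ell\rangle^{\tau-1}\ge1$. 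The same bound disposes of all $j$ with $\omega_j>2L_n$, for then $|\omega\ell-\omega_j|\ge\omega_j-\omega L_n\ge(2-\omega)L_n\ge\frac12$.

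\emph{Potentially resonant case: $\frac1{3\varepsilon}\le\ell\le L_n$ and $|\omega\ell-\omega_j|<\frac12$.} Then $\omega_j<\omega\ell+\frac12<2L_n$, so the pair $(\ell,j)$ is constrained by \eqref{cantorAn}; moreover $\frac12\ell\le\omega_j$, $\ell+\omega_j\le3\ell$ and $\langle\ell\rangle=\ell$. I would write
\[
\omega^2\ell^2-\lambda_{\ell,j}(\varepsilon)=(\omega\ell+\omega_j)\Big(\omega\ell-\omega_j-\frac{\varepsilon M(w)}{2\omega_j}\Big)+(\omega\ell+\omega_j)\,e_{\ell,j}+\rho_{\ell,j},\qquad e_{\ell,j}:=\varepsilon M(w)\Big(\frac1{2\omega_j}-\frac1{\omega\ell+\omega_j}\Big),
\]
noting $|e_{\ell,j}|=\frac{\varepsilon|M(w)|\,|\omega\ell-\omega_j|}{2\omega_j(\omega\ell+\omega_j)}\le\frac{C\varepsilon}{\omega_j^{2}}$. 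By the first inequality in \eqref{cantorAn} and $\omega\ge1$, the leading term has modulus $>(\omega\ell+\omega_j)\frac{\gamma}{(\ell+\omega_j)^\tau}\ge\frac{\gamma}{(\ell+\omega_j)^{\tau-1}}\ge\frac{\gamma}{(3\ell)^{\tau-1}}\ge\frac{\gamma}{3\ell^{\tau-1}}$, where $\tau<2$ is used for $3^{\tau-1}<3$. For the remainders, $(\omega\ell+\omega_j)|e_{\ell,j}|\le\frac{C\varepsilon}{\omega_j}\le\frac{C'\varepsilon}{\ell}$ and $|\rho_{\ell,j}|\le\frac{C'\varepsilon}{\ell^{1-\delta}}$; fixing $\delta\in\big(0,2-\tau\big)$, the exponents $\tau-2$ and $\tau-2+\delta$ of $\ell$ are negative, so (since $\ell\ge1$) both remainders are $\le\frac{\gamma}{12\,\ell^{\tau-1}}$ once $\varepsilon_0\gamma^{-1}$ is small enough. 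Hence $|\omega^2\ell^2-\lambda_{\ell,j}(\varepsilon)|\ge\frac{\gamma}{3\ell^{\tau-1}}-\frac{\gamma}{6\ell^{\tau-1}}=\frac{\gamma}{6\ell^{\tau-1}}\ge\frac{\gamma}{20\langle\ell\rangle^{\tau-1}}$; taking the minimum over $j$ yields \eqref{alfa}.

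The one genuinely non-routine point is the bookkeeping in the potentially resonant case: one must check that after dividing by $\omega\ell+\omega_j$ there is \emph{no} surviving $O(\varepsilon)$ error. This is exactly why the shift $\frac{\varepsilon M(w)}{2\omega_j}$ appears in \eqref{cantorAn} — it matches the $\varepsilon$-linear part of $\lambda_{\ell,j}(\varepsilon)-\omega_j^2$ up to the mismatch $(\omega\ell+\omega_j)e_{\ell,j}=O(\varepsilon/\ell)$ coming from $\omega\ell+\omega_j\neq2\omega_j$, plus the Sturm–Liouville remainder $\rho_{\ell,j}=O(\varepsilon/\ell^{1-\delta})$, and both are negligible against $\gamma\,\ell^{1-\tau}$ precisely because $\tau<2$ (with $\delta<2-\tau$). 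This is where the hypothesis $\tau\in(1,2)$ is essential.
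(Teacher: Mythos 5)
Your proof is correct and follows the same overall strategy as the paper: split $\ell$ into a non-resonant régime (elementary bound $|\omega^2\ell^2-\omega_j^2|\gtrsim 1$, valid for $\ell\le\frac{1}{3\varepsilon}$ or $|\omega\ell-\omega_j|$ not small) and a potentially resonant régime where the first Melnikov condition in \eqref{cantorAn} is invoked, feeding in the Sturm--Liouville estimate \eqref{derlam} and exploiting $\tau<2$, $\delta<2-\tau$ so that the remainders are negligible against $\gamma\,\ell^{1-\tau}$. The one place you genuinely deviate is the bookkeeping around the frequency shift: you factor exactly, writing $\omega^2\ell^2-\omega_j^2-\varepsilon M(w)=(\omega\ell+\omega_j)\bigl(\omega\ell-\omega_j-\tfrac{\varepsilon M(w)}{2\omega_j}\bigr)+\varepsilon M(w)\tfrac{\omega\ell-\omega_j}{2\omega_j}$ and bound the second term by $O(\varepsilon/\omega_j)$ using $|\omega\ell-\omega_j|<\tfrac12$, whereas the paper Taylor-expands $\sqrt{\omega_j^2+\varepsilon M(w)}$ about $\omega_j$ and multiplies through by $\omega\ell$; your route avoids the Taylor remainder $O(\varepsilon^2\ell/\omega_j^3)$ at no cost and is slightly cleaner, but the estimates are of the same quality and the same smallness requirement $\varepsilon_0\gamma^{-1}$ small is needed. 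Both proofs implicitly use that the Melnikov condition in $G_n(w)$ is available only for $\omega_j\le 2L_n$ (and $\ell\le L_n$); you make the restriction $\ell\le L_n$ explicit and dispose of $\omega_j>2L_n$ via the trivial bound, while the paper absorbs it into the $|\omega\ell-\omega_j|\ge1$ case — either is fine for the way $\alpha_\ell$ is subsequently used on $W^{(n)}$.
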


\begin{proof}
From \eqref{derlam} it follows that for any $ \ell\in \Z,\, |\ell| \neq \omega_j$, 
\begin{equation}\label{serviva}
|\omega^2\ell^2-\lambda_{\ell,j}(\varepsilon)|\geq \left| \omega^2\ell^2-\omega_j^2 -\varepsilon M(w)\right|-C(\delta)\frac{\varepsilon \|b_0\|_{\cH^2_x}}{\omega_j^{1-\delta}},
\end{equation}
where, according to  \eqref{cantor}, $M(w)=\frac{1}{\pi}\int_0^\pi b_0(x)dx$, and $b_0(x)=\frac{1}{2\pi}\int_{0}^{2\pi}3(w(t,x)+v(w)(t,x))^2dt.$\\
For $\ell=0$ we have $\alpha_0=\min\limits_{j\in \N}|\lambda_{0,j}|=\lambda_{0,0}(\varepsilon)$. Since $\lambda_{0,0}(0)=1$, then \eqref{alfa} holds for any $\varepsilon$ small enough by continuity. 
For $\ell\neq 0$ we first note that, by simmetry of $\omega^2\ell^2$ and $\lambda_{\ell,j}$, we have $\alpha_\ell=\alpha_{-\ell}$, so we consider $\ell\geq 1$. We shall look at 2 different cases:
\\[1mm]
{\sc $1^{st}$ case: $0<\ell\leq\frac{1}{3\varepsilon}.$} Since $|\omega-1|\leq 2 \varepsilon$ and $ \ell\neq \omega_j$, then we have:
$$
|\omega \ell-\omega_j|=|\ell-\omega_j+(\omega-1)\ell|\geq |\ell-\omega_j|-|\omega-1||\ell|\geq 1-2\varepsilon\frac{1}{3\varepsilon}=\frac{1}{3} \, .
$$
It follows by \eqref{serviva} $|\omega^2 \ell^2-\lambda_{\ell,j}|\geq \frac{1}{3}-O(\varepsilon)\geq \frac{1}{4}\geq \frac{\gamma}{20|\ell|^{\tau-1}}$.
\\[1mm]
{\sc $2^{nd}$ case:  $\ell>\frac{1}{3\varepsilon}$.}
Applying \eqref{serviva} for some $\delta \in ]0,2-\tau]\,$ we obtain
\begin{align*}
&|\omega^2\ell^2-\lambda_{\ell,j}(\varepsilon)|\geq \left|\omega^2\ell^2-\omega_j^2-\varepsilon M(w)\right|-C(\delta)\varepsilon\frac{\|b_0\|_{\cH^2_x}}{\omega_j^{1-\delta}}\\
&=\left|\omega\ell-\sqrt{\omega_j^2+\varepsilon M(w)}\right|\left|\omega\ell+\sqrt{\omega_j^2+\varepsilon M(w)}\right|-C(\delta)\|b_0\|_{\cH^2_x}\varepsilon\omega_j^{-1+\delta}.
\end{align*}
By Taylor $\sqrt{\omega_j^2+\varepsilon M(w)}=\omega_j \left(1+\varepsilon\frac{M(w)}{2\omega_j^2}+O\left(\frac{\varepsilon^2}{\omega_j^4}\right)\right)$, it follows:
\begin{align*}
&|\omega^2\ell^2-\lambda_{\ell,j}(\varepsilon)|\geq \omega \ell\cdot \left| \omega \ell-\omega_j-\varepsilon\frac{M(w)}{2\omega_j}\right|-\hat{C}(\delta)\left(\frac{\varepsilon}{\omega_j^{1-\delta}}+\frac{\varepsilon^2 \ell}{\omega_j^3}\right)\\
&\underbrace{\geq}_{\varepsilon \in G_n(w)}\frac{\omega \ell\gamma}{|\ell+\omega_j|^{\tau}}-\hat{C}(\delta)\left(\frac{\varepsilon}{\omega_j^{1-\delta}}+\frac{\varepsilon^2\ell}{\omega_j^3}\right)
\end{align*}
for some positive constant $\hat{C}(\delta)>0$ which tends to $+\infty$ as $\delta$ tends to $0$. For any $\tau \in ]1,2[$ we take $\delta\in ]0,2-\tau]$ so that $\tau-1 \leq 1-\delta$.\\
Now, if $|\omega \ell -\omega_j|\geq 1$, then by \eqref{serviva} $|\omega^2\ell^2-\lambda_{\ell,j}(\varepsilon)|\geq \omega_j-C\varepsilon\geq \frac{\omega_j}{2}$, and so the thesis holds. 
Hence, we can suppose that $\alpha_\ell$ is realized for some $j=j(\ell)$ such that $|\omega \ell- \omega_j|<1$, which implies $\omega_j\leq \omega \ell +1\leq 3\ell$ and so $\ell+\omega_j\leq 4\ell$. 
It follows $\omega \ell\cdot \frac{\gamma}{|\ell+\omega_j|^{\tau}}\geq \ell\cdot\frac{\gamma}{(4\ell)^\tau}\geq \frac{\gamma}{16 \ell^{\tau-1}}$. 
On the other hand, we also have $\omega \ell\leq \omega_j +1$, so $\ell\leq \omega_j+1\leq 2\omega_j$, hence $\omega_j\geq \frac{\ell}{2}$, and so, provided $\varepsilon\gamma^{-1}\ll \hat{C}(\delta)^{-1}$ we have
$$\hat{C}(\delta)\left(\frac{\varepsilon}{\omega_j^{1-\delta}}+\frac{\varepsilon^2 \ell}{\omega_j^3}\right)\leq 4\hat{C}(\delta) \frac{\varepsilon}{\ell^{1-\delta}}\leq  \frac{\gamma}{80\ell^{\tau-1}}.$$
It follows finally $|\omega^2\ell^2-\lambda_{\ell,j}(\varepsilon)|\geq \frac{\gamma}{16\ell^{\tau-1}}-\frac{\gamma}{80\ell^{\tau-1}}\geq\frac{\gamma}{20
\ell^{\tau-1}}$, which gives our statement.
\end{proof}
We define $|D_\ell|^{-\frac{1}{2}}$ as the linear operator acting as $|D_\ell|^{-\frac{1}{2}}\phi_{\ell,j}(\varepsilon)=|\omega^2\ell^2-\lambda_{\ell,j}(\varepsilon)|^{-\frac{1}{2}}\phi_{\ell,j}(\varepsilon)$, where $\phi_{\ell,j}(\varepsilon)$ is the $F_\ell$-orthonormal basis defined in Proposition \ref{sturm}. We also define $|D|^{-\frac{1}{2}}:=\text{diag}(|D_\ell|^{-\frac{1}{2}})_{\ell\in\Z}$.
\begin{lemma}[Invertibility of $D$]\label{Di}
For any $\varepsilon \in G_n(w)\subseteq [0,\varepsilon_0]$, with $\varepsilon_0$ given in Lemma \ref{alfakappa}, the operator $D_\ell$ is invertible in $F_\ell$ (defined in \eqref{effelle}, and
\begin{equation}\label{stimadk}
\||D_\ell|^{-\frac{1}{2}}h\|_{2,\varepsilon}\leq \frac{1}{\sqrt{\alpha_\ell}}\|h\|_{2,\varepsilon} \, ,\quad \forall h \in F_\ell 	\, .
\end{equation}
In addition the operator $|D|^{\frac{1}{2}}$ is invertible on $W^{(n)}$ for any $n\in\N$ and it satisfies
\begin{equation}\label{estdiag}
\left \| |D|^{-\frac{1}{2}}h\right\|_{\sigma,s}\leq \frac{9}{\sqrt{\gamma}}\|h\|_{\sigma, s+\frac{\tau-1}{2}},\quad \forall h \in W^{(n)}.
\end{equation}
\end{lemma}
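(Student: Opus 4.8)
The plan is to read everything off the spectral decomposition of $S_\ell(\varepsilon)$ already produced in Proposition \ref{sturm}, combined with the small–divisor lower bound of Lemma \ref{alfakappa}. Since $D_\ell=\omega^2\ell^2-S_\ell(\varepsilon)$ and, by Proposition \ref{sturm}, the functions $\{\phi_{\ell,j}(\varepsilon)\}_{\omega_j\neq\ell}$ form a $\langle\cdot,\cdot\rangle_{2,\varepsilon}$–orthonormal basis of $F_\ell$ made of eigenvectors of $S_\ell(\varepsilon)$, the operator $D_\ell$ is diagonal in this basis, $D_\ell\phi_{\ell,j}(\varepsilon)=(\omega^2\ell^2-\lambda_{\ell,j}(\varepsilon))\phi_{\ell,j}(\varepsilon)$. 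For $\varepsilon\in G_n(w)$, Lemma \ref{alfakappa} gives $|\omega^2\ell^2-\lambda_{\ell,j}(\varepsilon)|\geq\alpha_\ell\geq\gamma/(20\langle\ell\rangle^{\tau-1})>0$ for all $j$ with $\omega_j\neq\ell$, so $D_\ell$ is invertible on $F_\ell$, the operator $|D_\ell|^{-1/2}$ introduced just before the lemma is bounded, and for $h=\sum_{\omega_j\neq\ell}\hat h_j\phi_{\ell,j}(\varepsilon)\in F_\ell$ — for which $\|h\|_{2,\varepsilon}^2=\sum_j|\hat h_j|^2$ — one immediately gets $\||D_\ell|^{-1/2}h\|_{2,\varepsilon}^2=\sum_j|\omega^2\ell^2-\lambda_{\ell,j}(\varepsilon)|^{-1}|\hat h_j|^2\leq\alpha_\ell^{-1}\|h\|_{2,\varepsilon}^2$, which is \eqref{stimadk}.

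To obtain the statement on $W^{(n)}$ I would patch the blocks together: $|D|^{-1/2}=\operatorname{diag}(|D_\ell|^{-1/2})_{\ell\in\Z}$ is diagonal in the time–Fourier basis and each $|D_\ell|^{-1/2}$ maps $F_\ell$ into $F_\ell$, hence $|D|^{-1/2}$ maps $W^{(n)}$ into $W^{(n)}$; it is clearly the two–sided inverse of $|D|^{1/2}$ (diagonal operators with reciprocal entries), so $|D|^{1/2}$ is invertible on $W^{(n)}$. For \eqref{estdiag}, writing $h=\sum_{|\ell|\leq L_n}e^{i\ell t}h_\ell(x)\in W^{(n)}$ with $h_\ell\in F_\ell$, I would convert \eqref{stimadk} from the $\|\cdot\|_{2,\varepsilon}$–norm to the $\|\cdot\|_{\cH^2_x}$–norm using the two–sided comparison \eqref{normeq} (legitimate because $\varepsilon\|b_0\|_\infty$ is small), and then sum over $\ell$ against the weights $e^{2\sigma|\ell|}\langle\ell\rangle^{2s}$. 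Since $\alpha_\ell^{-1}\leq 20\gamma^{-1}\langle\ell\rangle^{\tau-1}$, the extra factor $\langle\ell\rangle^{\tau-1}$ is precisely what turns $\sum_\ell e^{2\sigma|\ell|}\langle\ell\rangle^{2s}\|h_\ell\|_{\cH^2_x}^2$ into $\|h\|_{\sigma,s+\frac{\tau-1}{2}}^2$, giving $\||D|^{-1/2}h\|_{\sigma,s}^2\leq(\mathrm{const}/\gamma)\|h\|_{\sigma,s+\frac{\tau-1}{2}}^2$.

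The only delicate point — and it is a minor one — is keeping the numerical constant under control so that the final bound is exactly $9/\sqrt\gamma$. The factor $20$ from Lemma \ref{alfakappa} must combine with the norm–equivalence ratio $\frac{1+2\varepsilon\|b_0\|_\infty+\varepsilon^2\|b_0\|_\infty^2}{1-2\varepsilon\|b_0\|_\infty-\varepsilon^2\|b_0\|_\infty^2}$ from \eqref{normeq} to produce a constant $\leq 81$, so that after taking square roots one lands on $9/\sqrt\gamma$. Since that ratio tends to $1$ as $\varepsilon_0\to0$, it is enough to shrink $\varepsilon_0$ so that it does not exceed $81/20$; all the rest is routine bookkeeping, the genuine work being contained in Proposition \ref{sturm} and Lemma \ref{alfakappa}.
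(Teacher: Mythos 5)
Your proposal is correct and follows essentially the same route as the paper: invertibility of each $D_\ell$ and the block estimate \eqref{stimadk} are read off the $\langle\cdot,\cdot\rangle_{2,\varepsilon}$-orthonormal eigenbasis of $S_\ell(\varepsilon)$ from Proposition \ref{sturm} together with the lower bound $\alpha_\ell\geq\gamma/(20\langle\ell\rangle^{\tau-1})$ of Lemma \ref{alfakappa}, and \eqref{estdiag} then follows by converting to the $\cH^2_x$-norm via \eqref{normeq} and summing the Fourier blocks. The only minor remark is that no further shrinking of $\varepsilon_0$ is actually needed: the constraint $\varepsilon\leq\frac{1}{4\|b_0\|_\infty}$ already in force bounds the norm-equivalence ratio by $25/7<4$, so $20\cdot\frac{25}{7}<81$ holds automatically, which is exactly the paper's $\frac{4}{\alpha_\ell}\leq\frac{80\langle\ell\rangle^{\tau-1}}{\gamma}$ and $\sqrt{80}<9$.
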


\begin{proof}
By Lemma \ref{alfakappa} $|\omega^2\ell^2-\lambda_{\ell,j}|\geq \alpha_\ell>0$, for any $ \ell\neq \omega_j \in \N $ and therefore  each $D_\ell^{-1}$ is well-defined and \eqref{stimadk} holds.\\
Moreover by \eqref{alfa}, \eqref{stimadk} and the equivalence of the norms $\|\cdot \|_{\cH^2_x},\, \|\cdot\|_{2,\varepsilon}$ in \eqref{normeq}, we have that
for any $ h\in W^{(n)} $, 
$|D|^{-\frac{1}{2}}h=\sum\limits_{|\ell|\leq L_n}\exp(i\ell t)|D_\ell|^{-\frac{1}{2}}h_\ell$ satisfies   
\begin{align*}
\left\| |D|^{-\frac{1}{2}}h\right\|^2_{\sigma,s}= \sum\limits_{|\ell|\leq L_n}\exp(2\sigma|\ell|)\langle \ell \rangle^{2s}\left\| |D_\ell|^{-\frac{1}{2}}h_\ell\right\|^2_{\cH^2_x}\leq \sum\limits_{|\ell|\leq L_n}\exp(2\sigma|\ell|)\langle \ell \rangle^{2s}\frac{4}{\alpha_\ell}\|h_\ell\|^2_{\cH^2_x}\\
\leq 80\sum\limits_{|\ell|\leq L_n}\exp(2\sigma|\ell|)\langle \ell \rangle^{2s}\frac{\langle \ell\rangle^{\tau-1}}{\gamma}\left\|h_\ell\right\|^2_{\cH^2_x}\leq 80\gamma^{-1}\|h\|^2_{\sigma, s+\frac{\tau-1}{2}}.
\end{align*}
This proves \eqref{estdiag}. 
\end{proof}

\subsection{Invertibility of $\mathfrak{L}_n $}

In the following we show how to recover the invertibility of $\mathfrak{L}_n := \mathfrak{L}_n (\varepsilon, w)$ defined in \eqref{linearizedsum}  from the invertibility of its main diagonal part $D$.
It is convenient to factor out $\mathfrak{L}_n(\varepsilon,w)$ by $|D|^{\frac{1}{2}}$ writing 
$$
\mathfrak{L}_n= |D|^{\frac{1}{2}}\left( U-\varepsilon \mathcal{R}_1-\varepsilon \mathcal{R}_2\right)|D|^{\frac{1}{2}},
$$
where, recalling \eqref{Lpartsi},\eqref{Lpartsii},  the operators 
$U$, $\mathcal{R}_1$ and $\mathcal{R}_2$ are  
\begin{equation}\label{Lparts2}
U:=\textrm{sgn}(D) := \big(\textrm{sgn}(D_\ell))_{\ell \in \Z} \big), 
\quad \mathcal{R}_i :=|D|^{-\frac{1}{2}}\mathcal{M}_i |D|^{-\frac{1}{2}},\, i=1,2 \, .
\end{equation}
\begin{lemma}[Estimate of $U$]\label{U}
There exists $\varepsilon_0>0$ small enough such that, for any $ |\varepsilon |\leq\varepsilon_0$, the operator $ U $ in \eqref{Lparts2} satisfies 
\begin{equation}\label{stimaU}
\|Uh\|_{\sigma, s} \leq 4\|h\|_{\sigma,s} \, , \quad \forall h \in W^{(n)}, \,\,\, \quad \forall \sigma \geq 0,\, \forall s>\frac{1}{2}\,. 
\end{equation}
\end{lemma}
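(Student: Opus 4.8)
The plan is to use that, by Proposition \ref{sturm}, each diagonal block $D_\ell = \omega^2\ell^2 - S_\ell(\varepsilon)$ is diagonalized by the basis $\{\phi_{\ell,j}(\varepsilon)\}_{\omega_j\neq\ell}$ of $F_\ell$, which is orthonormal with respect to the inner product $\langle\cdot,\cdot\rangle_{2,\varepsilon}$ of \eqref{bilinear}. Hence $\textrm{sgn}(D_\ell)$ sends $\phi_{\ell,j}(\varepsilon)$ to $\textrm{sgn}\big(\omega^2\ell^2-\lambda_{\ell,j}(\varepsilon)\big)\phi_{\ell,j}(\varepsilon)$, and since each of these signs lies in $\{-1,+1\}$ — the number $\omega^2\ell^2-\lambda_{\ell,j}(\varepsilon)$ being nonzero by Lemma \ref{alfakappa} — the operator $\textrm{sgn}(D_\ell)$ is an isometry of $(F_\ell,\langle\cdot,\cdot\rangle_{2,\varepsilon})$, i.e. $\|\textrm{sgn}(D_\ell)h_\ell\|_{2,\varepsilon}=\|h_\ell\|_{2,\varepsilon}$ for every $h_\ell\in F_\ell$. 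Since $X_{\sigma,s}=X_{\sigma,s,2}$, any $h\in W^{(n)}\cap X_{\sigma,s}$ has Fourier coefficients $h_\ell\in\cH^2_x$ with $\langle h_\ell,\egen_{\ell-1}\rangle_{\cH^0_x}=0$, i.e. $h_\ell\in F_\ell$, for $|\ell|\le L_n$, so the isometry property applies mode by mode (recalling $D_{-\ell}=D_\ell$).

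Next I would transfer this estimate from the $\langle\cdot,\cdot\rangle_{2,\varepsilon}$-norm back to the $\cH^2_x$-norm through the two-sided bound \eqref{normeq}: writing $c_\varepsilon:=2\varepsilon\|b_0\|_\infty+\varepsilon^2\|b_0\|^2_\infty<1$ for $\varepsilon$ small,
\[
\|\textrm{sgn}(D_\ell)h_\ell\|^2_{\cH^2_x}\ \le\ \frac{\|\textrm{sgn}(D_\ell)h_\ell\|^2_{2,\varepsilon}}{1-c_\varepsilon}\ =\ \frac{\|h_\ell\|^2_{2,\varepsilon}}{1-c_\varepsilon}\ \le\ \frac{1+c_\varepsilon}{1-c_\varepsilon}\,\|h_\ell\|^2_{\cH^2_x}.
\]
Since $\tfrac{1+c_\varepsilon}{1-c_\varepsilon}\to1$ as $\varepsilon\to0$, one fixes $\varepsilon_0>0$ so that this ratio is $\le16$ on $[0,\varepsilon_0]$ (in fact $\le2$, but $16$ is all that is needed). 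Summing over $|\ell|\le L_n$ against the weights $\exp(2\sigma|\ell|)\langle\ell\rangle^{2s}$ from Definition \ref{anal.sp} then gives $\|Uh\|^2_{\sigma,s}\le16\|h\|^2_{\sigma,s}$, and \eqref{stimaU} follows after taking square roots; the estimate is uniform in $\sigma\ge0$ and $s>\tfrac12$ since these parameters only enter as fixed multiplicative weights.

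I do not expect a real obstacle here: the whole content is that $U=\textrm{sgn}(D)$ is unitary in the ``wrong'' inner product $\langle\cdot,\cdot\rangle_{2,\varepsilon}$, so the only price paid is the $O(\varepsilon)$ gap between $\langle\cdot,\cdot\rangle_{2,\varepsilon}$ and $\langle\cdot,\cdot\rangle_{\cH^2_x}$ quantified in \eqref{normeq}. The one point to watch is that $\|b_0\|_\infty$ is finite and under control: this holds because $b_0$ is the time-average of $b=3(w+v(w))^2$, which under the standing assumption $\|w\|_{\sigma,s+\frac{2\tau(\tau-1)}{2-\tau}}<\rho$ is bounded in a high Sobolev norm by the algebra estimate \eqref{algebra} and the smoothness of $v$ from Proposition \ref{soluzbif} — exactly the bound already implicit in the statement of \eqref{normeq}. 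The threshold $\varepsilon_0$ is then chosen depending on $\|b_0\|_\infty$, hence ultimately on $\rho,s,m$.
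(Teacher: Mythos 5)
Your proof is correct and follows essentially the same route as the paper's: $\textrm{sgn}(D_\ell)$ is an isometry for $\langle\cdot,\cdot\rangle_{2,\varepsilon}$ because $\{\phi_{\ell,j}(\varepsilon)\}$ is $\langle\cdot,\cdot\rangle_{2,\varepsilon}$-orthonormal, and then one transfers to $\|\cdot\|_{\cH^2_x}$ via the norm equivalence \eqref{normeq} and sums over $\ell$. The only cosmetic difference is that you track the explicit ratio $\tfrac{1+c_\varepsilon}{1-c_\varepsilon}$ while the paper just absorbs two factors of $2$; both yield \eqref{stimaU}, and your remark that $\varepsilon_0$ implicitly depends on the bound for $\|b_0\|_\infty$ coming from the standing assumption on $w$ is accurate.
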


\begin{proof}
By definition $\forall h \in W^{(n)},\,\textrm{sgn}(D)h=\sum\limits_{|\ell|\leq L_n}\exp(i\ell t)\textrm{sgn}(D_\ell )h_\ell$. Moreover the norms $\|\cdot\|_{2,\varepsilon}$ and $\|\cdot\|_{\cH^2_x}$ are equivalent for $|\varepsilon|\leq \varepsilon_0$ small enough by \eqref{normeq}. It follows that
\begin{align*}
&\|Uh\|^2_{\sigma,s}=\sum\limits_{|\ell|\leq L_n}\exp(2\sigma|\ell|)\langle \ell \rangle^{2s}\|\textrm{sgn}(D_\ell) h_\ell\|^2_{\cH^2_x}\\
&\leq 2\sum\limits_{|\ell|\leq L_n}\exp(2\sigma|\ell|)\langle \ell \rangle^{2s}\|\textrm{sgn}(D_\ell) h_\ell\|^2_{2,\varepsilon}
\leq 4\sum\limits_{|\ell|\leq L_n}\exp(2\sigma|\ell|)\langle \ell \rangle^{2s}\| h_\ell\|^2_{\cH^2_x}\leq 4\|h\|_{\sigma,s}^2
\end{align*}
proving the lemma.
\end{proof}

\begin{lemma}[Analysis of Small Divisors]\label{smalldiv}
Let $\sigma\geq 0,\, s>\frac{1}{2}$, and let $\beta:=\frac{2-\tau}{\tau}\in ]0,1[$. Then 
there exist $\bar{C}>0$ and $ \varepsilon_0 := \varepsilon_0(\sigma,s, \gamma,\tau)>0$ small enough such that, for any $\varepsilon\in G_n(w)\subseteq [0,\varepsilon_0]$, the following estimates hold
\begin{equation}\label{piccdiv}
\frac{1}{\alpha_\ell \alpha_k}\leq \bar{C} \frac{|k-\ell|^{2\frac{\tau-1}{\beta}}}{\gamma^2\varepsilon^{\tau-1}},\quad \forall \ell, k\in \Z,\,\, \ell \neq k \, .
\end{equation}
\end{lemma}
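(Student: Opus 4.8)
The plan is to derive \eqref{piccdiv} from the lower bound $\alpha_\ell\geq\frac{\gamma}{20\langle\ell\rangle^{\tau-1}}$ of Lemma \ref{alfakappa} combined with a \emph{rigidity} principle: two small divisors $\alpha_\ell,\alpha_k$ with close indices cannot both be much smaller than the trivial product bound, since this would force $\omega q$, with $q:=|\ell-k|$, to be abnormally close to $\Z$, and that is excluded by the non-resonance conditions defining $G_n(w)$. By $\alpha_{-\ell}=\alpha_\ell$ I may assume $\ell>k\ge 0$ and set $q:=\ell-k\ge 1$. First I would dispose of the elementary cases: if $\ell\le\frac{1}{3\varepsilon}$ then $\alpha_\ell,\alpha_k\ge\frac14$ (the first case in the proof of Lemma \ref{alfakappa}) and \eqref{piccdiv} is immediate; if $k\le\frac{1}{3\varepsilon}<\ell$ one uses $\alpha_k\ge\frac14$ and $\alpha_\ell\ge\frac{\gamma}{20\ell^{\tau-1}}$ together with $q>\ell/2$ when $\ell>\frac{2}{3\varepsilon}$ (resp. $\ell\asymp\varepsilon^{-1}$ otherwise); and if $q^{2/\beta}>\varepsilon\,\ell k$ the naive bound $\alpha_\ell\alpha_k\ge\frac{\gamma^2}{400(\ell k)^{\tau-1}}$ already yields \eqref{piccdiv} (since then $(\ell k)^{\tau-1}<q^{2(\tau-1)/\beta}\varepsilon^{-(\tau-1)}$, so $\bar C\ge 400$ suffices). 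The crux is therefore the \emph{resonant regime}
$$ \ell>k>\tfrac{1}{3\varepsilon}, \qquad q^{2/\beta}\le\varepsilon\,\ell\,k \quad(\text{hence }q<\ell,\ \text{and }q\le k\ \text{for }\varepsilon_0\text{ small}). $$

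Next, for $n\in\{\ell,k\}$ let $j_n$ realise $\alpha_n=|\omega^2n^2-\lambda_{n,j_n}|$; arguing as in Lemma \ref{alfakappa} one sees $\omega_{j_n}\in[n/2,2n]$, and writing $\theta_n:=\omega n-\omega_{j_n}$ and using Proposition \ref{sturm} (with a fixed $\delta\le 2-\tau$) one gets the dichotomy: either $\alpha_n\ge\frac34\,n|\theta_n|$, or $|\theta_n|\le C_0\varepsilon/n$; in all cases $|\theta_n|\le\max\{\tfrac{4\alpha_n}{3n},\tfrac{C_0\varepsilon}{n}\}$. Since $\theta_\ell-\theta_k=\omega q-(\omega_{j_\ell}-\omega_{j_k})$ with $\omega_{j_\ell}-\omega_{j_k}\in\Z$, this gives
$$ \mathrm{dist}(\omega q,\Z)\ \le\ |\theta_\ell-\theta_k|\ \le\ 2\max\Big\{\tfrac{4\alpha_\ell}{3\ell},\ \tfrac{4\alpha_k}{3k},\ \tfrac{C_0\varepsilon}{k}\Big\}. $$
On the other hand, putting $\eta:=\sqrt{1+\varepsilon}-1\in[\varepsilon/3,\varepsilon/2]$ so that $\mathrm{dist}(\omega q,\Z)=\mathrm{dist}(q\eta,\Z)$, one has a lower bound for the left-hand side: if $q<\frac{1}{3\varepsilon}$ then $q\eta<\frac12$, so $\mathrm{dist}(q\eta,\Z)=q\eta\ge\frac{q\varepsilon}{3}$; if $\frac{1}{3\varepsilon}\le q\,(<\ell\le L_n)$, the nearest integer to $\omega q$ is some $\omega_j\neq q$ with $\omega_j\le 2L_n$ (or else $\mathrm{dist}(q\eta,\Z)\ge\frac19$ anyway), so the pure Diophantine Melnikov condition in $G_n(w)$ yields $\mathrm{dist}(q\eta,\Z)\ge\frac{\gamma}{(3q)^{\tau}}$.

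Combining the two displays produces, according to which term attains the maximum, either a lower bound $\max\{\alpha_\ell,\alpha_k\}\gtrsim\frac{\gamma\,n}{q^\tau}$ (with $n\in\{\ell,k\}$) or an upper bound $k\lesssim\frac{\varepsilon\,q^\tau}{\gamma}$. Multiplying by $\min\{\alpha_\ell,\alpha_k\}\ge\frac{\gamma}{20\ell^{\tau-1}}$ (and, in the last case, just reusing the naive bound with the extracted relation between $\ell,k$ and $q$), and then inserting the resonant-regime inequality $q^{2/\beta}\le\varepsilon\ell k$ — which, via $\tau\beta=2-\tau$, converts the surviving powers of $\ell,k$ into powers of $q$ and $\varepsilon$ — I would obtain in each sub-case $\alpha_\ell\alpha_k\ge\frac{\gamma^2\varepsilon^{\tau-1}}{\bar C q^{2(\tau-1)/\beta}}$, i.e. \eqref{piccdiv}, after choosing $\bar C=\bar C(\tau)$ large enough and $\varepsilon_0=\varepsilon_0(\sigma,s,\gamma,\tau)$ small enough.

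The hard part will be uniformity. The ranges of $\ell,k,q$ are unbounded, so the naive bound from Lemma \ref{alfakappa} degenerates for large indices and the alignment argument is genuinely needed; and the numerous $\varepsilon$-error terms — those coming from Proposition \ref{sturm}, the sign-cancellation alternative $|\theta_n|\lesssim\varepsilon/n$, and (if one works with $\omega n-\omega_{j_n}-\varepsilon\frac{M(w)}{2\omega_{j_n}}$ instead of $\theta_n$) the term $\varepsilon\frac{M(w)}{2}(\omega_{j_\ell}^{-1}-\omega_{j_k}^{-1})$ — must be absorbed carefully into the small-divisor estimates, uniformly over these ranges. This is precisely where the hypothesis $\tau<2$ (equivalently $\beta>0$) and the exponent $2(\tau-1)/\beta$ in \eqref{piccdiv} come into play.
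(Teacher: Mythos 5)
Your approach is essentially the same as the paper's: you first dispose of sites far from the diagonal by the crude bound from Lemma \ref{alfakappa}, and in the resonant regime you exploit the rigidity that $\theta_\ell-\theta_k=\omega q-(\omega_{j_\ell}-\omega_{j_k})$ differs from $\omega q$ by an integer, so $|\theta_\ell-\theta_k|\ge\mathrm{dist}(\omega q,\Z)$, which is bounded below deterministically (for $q<\tfrac{1}{3\varepsilon}$, the nearest integer is $q$ and the distance is $q\eta$) or by the Melnikov condition (for $q\ge\tfrac{1}{3\varepsilon}$) — precisely the content of the paper's third and fourth cases. Your dichotomy $|\theta_n|\le\max\{\tfrac{4\alpha_n}{3n},\tfrac{C_0\varepsilon}{n}\}$ packages the same information a bit more systematically and, usefully, keeps the factor $n$ explicit (the paper's third case momentarily replaces $\alpha_k\gtrsim\varepsilon k$ by the weaker $\alpha_k\ge\tfrac1{24}$ before asserting a bound that in fact requires the $k$-dependence); this is a presentational difference, not a different proof.
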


\begin{proof}
{\sc First case:} sites far away from the diagonal, namely $|k-\ell|\geq \frac{1}{2}\max \lbrace |k|,|\ell|\rbrace^{\beta
}$. By \eqref{alfa} we have for $\varepsilon \leq 2^{-\frac{2}{\beta}}$
$$(\alpha_k\alpha_\ell)^{-1}\leq (20\gamma^{-1})^2\max\lbrace |k|,|\ell|\rbrace^{2(\tau-1)}\leq (20\gamma^{-1})^2(2|k-\ell|)^\frac{2(\tau-1)}{\beta}\leq C_1\gamma^{-2}\varepsilon^{-(\tau-1)}|k-\ell|^{\frac{2(\tau-1)}{\beta}}.$$

The other cases consist of sites close to the diagonal, namely when $|k-\ell|<\frac{1}{2}\max\lbrace |k|, |\ell| \rbrace^{\beta}$.\\We point out that in the case of sites close to the diagonal $\textrm{sgn}(k)=\textrm{sgn}(\ell)$, in fact if we assume $\textrm{sgn}(k)\neq \textrm{sgn}(\ell)$ then $|k-\ell|=|k|+|\ell|>\max\lbrace |k|, |\ell| \rbrace>\frac{1}{2}\max\lbrace |k|, |\ell| \rbrace^{\beta}$, since $\beta\in ]0,1[.$ Moreover by simmetry $\alpha_{-k}=\alpha_k$, so from now on we assume $k,\ell\geq 0$. We also point out that for sites which are close to the diagonal $k$ and $\ell$ have the same size, namely: \begin{equation}\label{ellekappa}
\frac{|k|}{2}\leq |\ell|\leq 2|k|,\,\,\,\text{and}\,\,\,\frac{|\ell|}{2}\leq |k|\leq 2|\ell| \, .
\end{equation}
In fact if $k\geq\ell$ then $|k-\ell|\leq\frac{|k|^\beta}{2}\leq \frac{|k|}{2}$, hence we have $|k|\leq |k-\ell|+|\ell|\leq |\ell|+\frac{|k|}{2}$ and so $ \frac{|k|}{2}\leq |\ell|$.\\
If $\ell \geq k$ we obtain with the same considerations that $\frac{|\ell|}{2}\leq |k|\leq |\ell|$.\\
{\sc Second case:} $|k-\ell|\leq \frac{\max \lbrace k,\ell\rbrace^{\beta}}{2}$, $k \leq \frac{1}{3\varepsilon}\, \vee\, \ell \leq \frac{1}{3\varepsilon}$.\\
If $k\leq \frac{1}{3\varepsilon}$, then for $\varepsilon$ small enough we have $\alpha_k\geq \frac{k+1}{12}$, in fact $|\omega k-\omega_j|\geq |k-\omega_j|-|\omega-1||k|>1-2\varepsilon\frac{1}{3\varepsilon}\geq \frac{1}{3}$. Then $|\omega^2k^2-\omega_j^2|\geq \frac{k}{3}$ and by \eqref{serviva} it follows
\begin{align*}
&|\omega^2k^2-\lambda_{k,j}(\varepsilon)|\geq |\omega^2k^2-\omega_j^2|-O(\varepsilon)
\geq\frac{k}{3}-C\varepsilon
\geq \frac{k+1}{6}-\frac{1}{6}\geq \frac{k+1}{12}.
\end{align*}
Now if also $\ell<\frac{1}{3\varepsilon}$, then $(\alpha_k \alpha_\ell)^{-1}\leq \frac{12^2}{(k+1)(\ell+1)}\leq12^2$, while if $\ell\geq\frac{1}{3\varepsilon}$ we use the bound given by \eqref{alfa} and \eqref{ellekappa} obtaining $(\alpha_k \alpha_\ell)^{-1}\leq \frac{12}{k+1}\frac{20|\ell|^{\tau-1}}{\gamma}\leq \frac{C_2}{\gamma}$ for some constant $C_2>0$.\\
We are left to consider when $|k-\ell|\leq \frac{\max\lbrace \ell, k \rbrace^\beta}{2},\, k>\frac{1}{3\varepsilon},\, \ell >\frac{1}{3\varepsilon}$. We define the integer numbers $j:=j(k)=\textrm{arg}\min\limits_{\stackrel{j'\in \N,}{\omega_{j'}\neq k}}|\omega^2k^2-\lambda_{k,j'}(\varepsilon)|,$ and in the same way we define $i:=i(\ell)$.\\
{\sc Third case:} $|k-\ell|\leq \frac{\max\lbrace \ell, k \rbrace^\beta}{2},\, k>\frac{1}{3\varepsilon},\, \ell >\frac{1}{3\varepsilon},\,\, k-\omega_j=\ell-\omega_i$.
\begin{align*}
&|(\omega k -\omega_j)-(\omega \ell -\omega_i)|=|\omega(k-\ell)-(\omega_j-\omega_i)|=
|\omega-1||k-\ell|\geq \frac{\varepsilon}{2}\\ &\Rightarrow |\omega k -\omega_j|\geq \frac{\varepsilon}{4}\, \vee \, |\omega \ell -\omega_i|\geq \frac{\varepsilon}{4}.
\end{align*}
Assume $|\omega k -\omega_j|\geq \frac{\varepsilon}{4}$, then $|\omega^2k^2-\omega_j^2|\geq \frac{\varepsilon}{4}\omega k \geq \frac{1}{12}$. It follows $\alpha_k\geq  \frac{1}{24}$ since  $\alpha_k\geq |\omega^2k^2-\omega_j^2|-C\varepsilon$.\\
Then, by \eqref{alfa} we obtain for some constants $C_3>c>0$ that $(\alpha_k\alpha_\ell)^{-1}\leq c\frac{\ell^{\tau-1}}{\gamma }\leq  C_3\frac{1}{\gamma \varepsilon^{\tau-1}}$.\\
If $|\omega_\ell-\omega_i|\geq \frac{\varepsilon}{4}$ we proceed as before with $k,\ell$ having inverted roles, obtaining the same bound.\\
{\sc Fourth case:} $|k-\ell|\leq \frac{\max \lbrace k, \ell \rbrace^\beta}{2},\, k>\frac{1}{3\varepsilon},\, \ell >\frac{1}{3\varepsilon},\,\, k-\omega_j\neq \ell-\omega_i$.\\
Using the condition $\varepsilon\in G_n(w)$ and recalling $\omega_j=j+1$, we obtain:
\begin{align*}
&|(\omega k -\omega_j)-(\omega \ell-\omega_i)|=|\omega(k-\ell)-(j-i)|
\geq \frac{\gamma}{|k-\ell|^{\tau}}\geq \frac{2^\tau \gamma}{\max\lbrace k,\ell\rbrace^{\beta \tau}}=\frac{2^\tau \gamma}{\max\lbrace k,\ell\rbrace^{2- \tau}},
\end{align*}
 and so 
$$|\omega k-\omega_j|\geq \frac{2^{\tau-1}\gamma}{\max\lbrace k,\ell\rbrace^{2- \tau}}\, \vee \, |\omega \ell-\omega_i|\geq \frac{2^{\tau-1}\gamma}{\max\lbrace k,\ell\rbrace^{2- \tau}}.$$
If the first one holds then for some $c',c''>0$ we have $|\omega^2k^2-\omega_j^2|\geq \frac{c'\gamma}{k^{2-\tau}}\omega k \geq c''\gamma k^{\tau-1}$, and so $\alpha_k \geq \frac{c''}{2}\gamma k^{\tau-1}$. It follows by \eqref{alfa} that for $C_4>40c''$ we have $(\alpha_k\alpha_\ell)^{-1}\leq 20c'' \frac{1}{\gamma k^{\tau-1}}\frac{\ell^{\tau-1}}{\gamma}\leq C_4\frac{1}{\gamma^2}$.\\
If the second one holds then the conclusion is the same by inverting the roles of $k$ and $\ell$.

In conclusion,  the previous cases finally imply that 
$$
(\alpha_k\alpha_\ell)^{-1}\leq \max\left\lbrace C_1\frac{|k-\ell|^\frac{2(\tau-1)}{\beta}}{\gamma^2\varepsilon^{\tau-1}},\, \frac{C_2}{\gamma},\,\frac{C_3}{\gamma\varepsilon^{\tau-1}},\,\frac{C_4}{\gamma^2} \right\rbrace\leq \bar{C}\frac{|k-\ell|^{\frac{2(\tau-1)}{\beta}}}{\gamma^2\varepsilon^{\tau-1}}
$$
where $\bar{C} :=\max\lbrace C_1,C_2,C_3,C_4\rbrace$. 
\end{proof}

\begin{lemma}[Estimate of $\mathcal{R}_1$]\label{r1}
Assume $\|w\|_{\sigma, s+\frac{2\tau(\tau-1)}{2-\tau}}<\rho$ defined in \eqref{rho}. Then, for any $\varepsilon\in G_n(w) $,  the operator $\mathcal{R}_1$ defined in \eqref{Lparts2} is bounded in $ X_{\sigma, s+\frac{\tau-1}{2}}$, in particular there exists $\tilde{C}=\tilde{C}(\sigma,s)>0$ such that
\begin{equation}\label{stimar1}
\|\mathcal{R}_1h\|_{\sigma, s+\frac{\tau-1}{2}}\leq \frac{\bar{C}}{\gamma \varepsilon^{\frac{\tau-1}{2}}}\|h\|_{\sigma,s+ \frac{\tau-1}{2}},\quad\forall h \in W^{(n)}.
\end{equation}
\end{lemma}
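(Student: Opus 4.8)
The plan is to represent $\mathcal{R}_1=|D|^{-\frac12}\mathcal{M}_1|D|^{-\frac12}$ explicitly in the time--Fourier basis and to reduce \eqref{stimar1} to a convolution inequality whose polynomial weight is supplied by the small divisor estimate of Lemma \ref{smalldiv}. First I would expand, for $h=\sum_{|k|\le L_n}\exp(ikt)h_k\in W^{(n)}$,
\[
(\mathcal{R}_1 h)_\ell=|D_\ell|^{-\frac12}\,\pi_\ell\Big(\sum_{|k|\le L_n}\tilde b_{\,\ell-k}\,|D_k|^{-\frac12}h_k\Big),\qquad |\ell|\le L_n ,
\]
where $\tilde b(t,x)=b(t,x)-b_0(x)$. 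The diagonal term $k=\ell$ drops out because $\tilde b$ has vanishing time average, $\tilde b_0=0$; this is precisely what allows Lemma \ref{smalldiv} (which requires $\ell\neq k$) to be applied to \emph{every} surviving term. Using \eqref{stimadk} together with the norm equivalence \eqref{normeq}, the fact that $\pi_\ell$ is a contraction on $\cH^2_x$, and the algebra property of $\cH^2_x$ (which is \eqref{algebra} restricted to time--independent functions, since $2>\tfrac32$), I would get
\[
\|(\mathcal{R}_1 h)_\ell\|_{\cH^2_x}\le C\sum_{k\neq\ell}\frac{1}{\sqrt{\alpha_\ell\,\alpha_k}}\,\|\tilde b_{\,\ell-k}\|_{\cH^2_x}\,\|h_k\|_{\cH^2_x},
\]
and then Lemma \ref{smalldiv} gives $(\alpha_\ell\alpha_k)^{-1/2}\le \sqrt{\bar C}\,\gamma^{-1}\varepsilon^{-(\tau-1)/2}\,|\ell-k|^{p}$, with $p:=\frac{\tau-1}{\beta}=\frac{\tau(\tau-1)}{2-\tau}$ ($\beta$ as in Lemma \ref{smalldiv}).

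The heart of the argument is the weighted $\ell^2$ sum over $\ell$. Put $\mu:=s+\tfrac{\tau-1}{2}$, so that $\mu>\tfrac12$ because $s>\tfrac12$. Multiplying the last estimate by $\exp(\sigma|\ell|)\langle\ell\rangle^{\mu}$ and using $\exp(\sigma|\ell|)\le\exp(\sigma|\ell-k|)\exp(\sigma|k|)$ and $\langle\ell\rangle^{\mu}\le 2^{\mu}\big(\langle\ell-k\rangle^{\mu}+\langle k\rangle^{\mu}\big)$, the right--hand side splits into two convolutions. For the piece that carries $\langle\ell-k\rangle^{\mu}$ I would invoke Young's inequality in the form $\ell^{2}=\ell^{2}\ast\ell^{1}$, putting the $\ell^{1}$ norm on $h$; this is admissible exactly because $\mu>\tfrac12$, since then $\sum_k\exp(\sigma|k|)\|h_k\|_{\cH^2_x}\le C_\mu\,\|h\|_{\sigma,\mu}$, and the piece is bounded by $\|\tilde b\|_{\sigma,\mu+p,2}\,\|h\|_{\sigma,\mu}$. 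For the piece that carries $\langle k\rangle^{\mu}$ I would instead use $\ell^{2}=\ell^{1}\ast\ell^{2}$, which only requires $\sum_m\exp(\sigma|m|)\,|m|^{p}\,\|\tilde b_m\|_{\cH^2_x}\le C_\delta\,\|\tilde b\|_{\sigma,p+\frac12+\delta,2}$ for a small $\delta>0$, again dominated by $\|\tilde b\|_{\sigma,\mu+p,2}\,\|h\|_{\sigma,\mu}$ (as $p+\tfrac12+\delta\le\mu+p$ for $\delta$ small). Hence
\[
\|\mathcal{R}_1 h\|_{\sigma,\mu}\le \frac{C(\sigma,s)}{\gamma\,\varepsilon^{(\tau-1)/2}}\,\|\tilde b\|_{\sigma,\,s+\frac{\tau-1}{2}+p,\,2}\,\|h\|_{\sigma,\mu}.
\]

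Finally comes the regularity bookkeeping, which is where the exponent $\tfrac{2\tau(\tau-1)}{2-\tau}$ of the hypothesis is used. The Sobolev index asked of $\tilde b$ is $s+\tfrac{\tau-1}{2}+p$; since $\tfrac{\tau-1}{2}\le p=\tfrac{\tau(\tau-1)}{2-\tau}$ is equivalent to $2-\tau\le 2\tau$, i.e. $\tau\ge\tfrac23$, this holds for every $\tau\in\,]1,2[$, so $s+\tfrac{\tau-1}{2}+p\le s+2p=s+\tfrac{2\tau(\tau-1)}{2-\tau}$, the right endpoint of $S$ in \eqref{rho}. Then, from $\|w\|_{\sigma,s+2p}<\rho$, Proposition \ref{soluzbif} furnishes $v(w)\in V\cap X_{\sigma,s+2p+2}$ with norm bounded by a constant depending only on $\sigma,s$ (and $m$), whence $b=3(w+v(w))^2\in X_{\sigma,s+2p,2}$ and $\|\tilde b\|_{\sigma,s+2p,2}\le\|b\|_{\sigma,s+2p,2}\le C(\sigma,s)$ by \eqref{algebra}; absorbing this into the constant yields \eqref{stimar1}. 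The genuinely delicate point is the choice of how to split Young's inequality: routing the $\ell^{1}$ factor onto $h$ whenever $\mu>\tfrac12$ permits it is what keeps the requirement on $\tilde b$ at the level $s+\tfrac{\tau-1}{2}+p$ rather than $s+\tfrac{\tau-1}{2}+p+\tfrac12+\delta$, and only the former is guaranteed by the hypothesis when $\tau$ is near $1$.
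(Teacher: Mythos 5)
Your proof is correct and follows essentially the same route as the paper: Fourier expansion of $\mathcal{R}_1$ in time, the observation that $\tilde b_0=0$ removes the diagonal, the bound $(\alpha_\ell\alpha_k)^{-1/2}\lesssim \gamma^{-1}\varepsilon^{-(\tau-1)/2}|\ell-k|^{(\tau-1)/\beta}$ from Lemma \ref{smalldiv}, and a weighted convolution estimate whose Sobolev demand on $b$ is absorbed into the hypothesis $\|w\|_{\sigma,\,s+\frac{2\tau(\tau-1)}{2-\tau}}<\rho$. The only cosmetic difference is that the paper packages the convolution step by introducing the scalar majorants $\mathtt{B}(t)=\sum_k e^{ikt}|k|^{(\tau-1)/\beta}\|b_k\|_{\cH^2_x}$ and $\mathtt{h}(t)=\sum_k e^{ikt}\|h_k\|_{\cH^2_x}$ and invoking the algebra inequality \eqref{algebra} in time, whereas you unpack that same algebra inequality as an explicit Young-type split — the content is identical.
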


\begin{proof}
Since $\|w\|_{\sigma,s+\frac{2\tau(\tau-1)}{2-\tau}+2}< \rho$, then $v(w)\in X_{\sigma,s+\frac{2\tau(\tau-1)}{2-\tau}+2}$ by Proposition \ref{soluzbif}. Moreover, recalling $b=2(v+w(v))^2$, by algebra estimate \eqref{algebra} there exists $R'>0$ such that 
\begin{equation}\label{bbb}
\|b\|_{\sigma,s+\frac{2\tau(\tau-1)}{2-\tau}}\leq R'.
\end{equation}
Now recalling  \eqref{Lparts2}, expanding 
$h(t,x)=\sum\limits_{|k|\leq L_n}\exp(ikt)h_k(x) \in W^{(n)}$, we have (cfr \eqref{Lpartsii})
$$
|D|^{-\frac{1}{2}}P_n\Pi_W(\tilde{b}|D|^{-\frac{1}{2}}h)=\sum\limits_{|\ell|\leq L_n}\exp(i\ell t)|D_\ell|^{-\frac{1}{2}}\pi_\ell\left(\sum\limits_{|k|\leq L_n}b_{\ell-k}\delta_{\ell \neq k}|D_k|^{-\frac{1}{2}}h_k\right).
$$
Thus we obtain
\begin{equation}\label{mormaR}
\|\mathcal{R}_1 h\|^2_{\sigma,s+\frac{\tau-1}{2}}=\sum\limits_{|\ell|\leq L_n}\exp(2\sigma|\ell|)\langle \ell\rangle^{2s+\tau-1}\left\| |D_\ell|^{-\frac{1}{2}}\pi_\ell\sum\limits_{\stackrel{|k|\leq L_n}{k\neq \ell}}b_{\ell-k}|D_k|^{-\frac{1}{2}}h_k\right\|^2_{\cH^2_x}.
\end{equation}
Let us denote $\mathtt{B}_0:=0,\,	\mathtt{B}_m:=\|b_m\|_{\cH^{2}_x},$ if $m\in \Z\setminus \lbrace 0 \rbrace$, then by \eqref{stimadk},\eqref{piccdiv} 
\begin{align*}
&\left\| |D_\ell|^{-\frac{1}{2}}\pi_\ell\sum\limits_{\stackrel{|k|\leq L_n}{k\neq \ell}}b_{\ell-k}|D_k|^{-\frac{1}{2}}h_k\right\|_{\cH^2_x}\leq \frac{1}{\sqrt{\alpha_\ell}}\sum\limits_{\stackrel{|k|\leq L_n}{k\neq \ell}}\mathtt{B}_{\ell-k} \frac{1}{\sqrt{\alpha_k}}\|h_k\|_{\cH^2_x}\\
&\leq \bar{C} \sum\limits_{\stackrel{|k|\leq L_n}{k\neq \ell}}\frac{|k-\ell|^{\frac{\tau-1}{\beta}}}{\gamma \varepsilon^{\frac{\tau-1}{2}}}\mathtt{B}_{k-\ell}\|h_k\|_{\cH^2_x}\leq \bar{C} \gamma^{-1}\varepsilon^{-\frac{\tau-1}{2}}S_\ell\quad \quad \textrm{where}\quad  S_\ell:=\sum\limits_{\stackrel{|k|\leq L_n}{k\neq \ell}}|k-\ell|^{\frac{\tau-1}{\beta}}\mathtt{B}_{k-\ell}\|h_k\|_{\cH^2_x}.
\end{align*}
Now let $\tilde{S}(t):=\sum\limits_{|\ell|\leq L_n}\exp(i\ell t)S_\ell$, then by \eqref{mormaR} we have
\begin{equation}\label{serviva22}
\|\mathcal{R}_1h\|_{\sigma,s+\frac{\tau-1}{2}}\leq \bar{C}\gamma^{-1}\varepsilon^{-\frac{\tau-1}{2}}\|\tilde{S}\|_{\sigma,s+\frac{\tau-1}{2}}.
\end{equation}
Writing $\tilde{S}(t)=P_n(\mathtt{B}\cdot \mathtt{h})$, where $\mathtt{B}(t)=\sum\limits_{k\in \Z\setminus\lbrace 0\rbrace}\exp(ikt)|k|^{\frac{\tau-1}{\beta}}\mathtt{B}_k$, and $\mathtt{h}(t):=\sum\limits_{|k|\leq L_n}\exp(ikt)\|h_k\|_{\cH^2_x}$, using algebra estimates \eqref{algebra} we have (remember $\beta=\frac{2-\tau}{\tau}$):
\begin{equation}\label{stimaesse}
\begin{aligned}
&\|\tilde{S}\|_{\sigma, s+\frac{\tau-1}{2}}\leq C(s) \|\mathtt{B}\|_{\sigma,s+\frac{\tau-1}{2}}\|\mathtt{h}\|_{\sigma,s+\frac{\tau-1}{2}}\\
&\leq C(s)\|b\|_{\sigma,s+(\tau-1)\left(\frac{1}{2}+\frac{1}{\beta}\right)}\|h\|_{\sigma, s+\frac{\tau-1}{2}}\underbrace{\leq}_{\frac{1}{2}+\frac{1}{\beta}\leq \frac{2}{\beta},\eqref{bbb}} C(s)R'\|h\|_{\sigma,s+\frac{\tau-1}{2}}.
\end{aligned}
\end{equation}
By plugging \eqref{stimaesse} into \eqref{serviva22}, we obtain the thesis with $\tilde{C}=C(s)R'\bar{C}$.
\end{proof}
\begin{lemma}[Estimate of $\mathcal{R}_2$]\label{r2}
Assume $\|w\|_{\sigma, s+\frac{2\tau(\tau-1)}{2-\tau}}<\rho$ defined in \eqref{rho}. Then, for any $\varepsilon\in G_n(w) $, the operator $\mathcal{R}_2$ defined in \eqref{Lparts2} is bounded in $ X_{\sigma, s+\frac{\tau-1}{2}}$, in particular there exists $\tilde{C}=\tilde{C}(\sigma,s)>0$ such that
\begin{equation}\label{stimar2}
\|\mathcal{R}_2h\|_{\sigma, s+\frac{\tau-1}{2}}\leq \tilde{C}\gamma^{-1}\|h\|_{\sigma,s+ \frac{\tau-1}{2}} \, ,\quad\forall h \in W^{(n)} \, .
\end{equation}
\end{lemma}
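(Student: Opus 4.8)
The plan is to exploit the crucial structural difference between $\mathcal{R}_2$ and $\mathcal{R}_1$: the operator $\mathcal{M}_2=P_n\Pi_W\big(b(t,x)\,\partial_w v(w)[\cdot]\big)$ in \eqref{Lpartsii} contains $\partial_w v(w)$, which by Proposition \ref{soluzbif} is \emph{smoothing of order two}, i.e.\ it is bounded from $W\cap X_{\sigma,s'}$ into $V\cap X_{\sigma,s'+2}$ with operator norm $\le R$ for every $s'$ in the interval $S$ of \eqref{rho}. Since $\tau-1<1<2$, this gain of two Sobolev derivatives more than absorbs the loss coming from the two factors $|D|^{-\frac12}$ in $\mathcal{R}_2=|D|^{-\frac12}\mathcal{M}_2\,|D|^{-\frac12}$, each of which costs only $\tfrac{\tau-1}{2}$ derivatives by the bound \eqref{estdiag} of Lemma \ref{Di}. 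Moreover \eqref{estdiag} relies only on the crude lower bound $\alpha_\ell\ge\tfrac{\gamma}{20\langle\ell\rangle^{\tau-1}}$ of \eqref{alfa} — not on the refined product estimate of Lemma \ref{smalldiv} — so no factor $\varepsilon^{-\frac{\tau-1}{2}}$ will appear and the resulting constant will be $\tilde C\gamma^{-1}$, confirming that $\mathcal{M}_2$ is a genuine remainder.

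First I would show that $\mathcal{M}_2$ maps $W^{(n)}\cap X_{\sigma,s}$ into $X_{\sigma,s+\tau-1}$ with a bound depending only on $\sigma,s,m$. Note that $\partial_w v(w)$ is \emph{linear} in its argument, so no smallness of the argument is needed; only $w$ must lie in the ball $\mathcal D^W_{\sigma,s'}(\rho)$, which holds for every $s'\in S$ by the hypothesis $\|w\|_{\sigma,s+\frac{2\tau(\tau-1)}{2-\tau}}<\rho$ together with the elementary inequality $s+\tau-1\le s+\frac{2\tau(\tau-1)}{2-\tau}$ (equivalent to $\tau\ge 2/3$). Hence Proposition \ref{soluzbif} gives, for any $g\in W^{(n)}\cap X_{\sigma,s}$,
\begin{equation*}
\|\partial_w v(w)[g]\|_{\sigma,s+\tau-1}\le \|\partial_w v(w)[g]\|_{\sigma,s+2}\le R\,\|g\|_{\sigma,s}.
\end{equation*}
Bounding $P_n,\Pi_W$ by $1$, applying the algebra estimate \eqref{algebra} in $X_{\sigma,\cdot,2}$, and using $\|b\|_{\sigma,s+\tau-1}\le\|b\|_{\sigma,s+\frac{2\tau(\tau-1)}{2-\tau}}\le R'$ from \eqref{bbb}, I obtain
\begin{equation*}
\|\mathcal{M}_2 g\|_{\sigma,s+\tau-1}\le C(s)\,\|b\|_{\sigma,s+\tau-1}\,\|\partial_w v(w)[g]\|_{\sigma,s+\tau-1}\le C(s)\,R'\,R\,\|g\|_{\sigma,s}.
\end{equation*}

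Finally I would apply \eqref{estdiag} twice: once with Sobolev index $s+\frac{\tau-1}{2}$ on the outer $|D|^{-\frac12}$, and once on $g:=|D|^{-\frac12}h$ (which lies in $W^{(n)}$ since each $|D_\ell|^{-\frac12}$ preserves $\langle\egen_{|\ell|-1}\rangle^\perp$ and the frequency truncation, so that $\partial_w v(w)[g]$ is well defined). For $h\in W^{(n)}\cap X_{\sigma,s+\frac{\tau-1}{2}}$ and $\varepsilon\in G_n(w)$ this gives
\begin{align*}
\|\mathcal{R}_2 h\|_{\sigma,s+\frac{\tau-1}{2}}
&=\big\||D|^{-\frac12}\mathcal{M}_2\,|D|^{-\frac12}h\big\|_{\sigma,s+\frac{\tau-1}{2}}
\le \tfrac{9}{\sqrt\gamma}\,\big\|\mathcal{M}_2\,|D|^{-\frac12}h\big\|_{\sigma,s+\tau-1}\\
&\le \tfrac{9}{\sqrt\gamma}\,C(s)\,R'\,R\,\big\||D|^{-\frac12}h\big\|_{\sigma,s}
\le \tfrac{81\,C(s)\,R'\,R}{\gamma}\,\|h\|_{\sigma,s+\frac{\tau-1}{2}},
\end{align*}
which is exactly \eqref{stimar2} with $\tilde C=\tilde C(\sigma,s):=81\,C(s)\,R'\,R$. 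I do not expect a real obstacle here: unlike for $\mathcal{R}_1$, no analysis of small divisors is required, and the only point demanding care is the regularity bookkeeping — in particular checking that $b$ is used only at the level $s+\tau-1\le s+\frac{2\tau(\tau-1)}{2-\tau}$, where \eqref{bbb} (and, a fortiori, the hypothesis on $w$) supplies the needed bound precisely because $\tau<2$.
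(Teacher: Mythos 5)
Your proof is correct and follows essentially the same route as the paper: two applications of the estimate \eqref{estdiag} on $|D|^{-\frac12}$, the algebra estimate \eqref{algebra}, the bound \eqref{bbb} on $b$, and crucially the two-derivative smoothing of $\partial_w v(w)$ from Proposition \ref{soluzbif}; the paper simply chains these in a single display rather than first isolating the $\mathcal{M}_2$ bound. The structural observation that motivates the argument — that no small-divisor product lemma is needed for $\mathcal{R}_2$, hence no $\varepsilon^{-\frac{\tau-1}{2}}$ appears — matches the paper's intent.
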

\begin{proof}
Recalling $\tau \in ]1,2[$ and \eqref{Lpartsii}, \eqref{Lparts2}, we have by algebra estimate \eqref{algebra}, \eqref{estdiag} and \eqref{bbb}
\begin{align*}
&\|\mathcal{R}_2 h \|_{\sigma,s+\frac{\tau-1}{2}}\leq \frac{9}{\sqrt{\gamma}}\left\|P_n\Pi_W\left(b\cdot \partial_w v(w)\left[|D|^{-\frac{1}{2}}h\right]\right)\right\|_{\sigma, s+\tau-1}\\
&\leq C(s)\frac{1}{\sqrt{\gamma}}\|b\|_{\sigma, s+\tau-1}\left\| \partial_w v(w)\left[|D|^{-\frac{1}{2}}h\right]\right\|_{\sigma, s+\tau-1} \leq C(s) \frac{R'}{\sqrt{\gamma}}\left\|\partial_w v(w)\left[|D|^{-\frac{1}{2}}h\right]\right\|_{\sigma, s+2}\\
&\underbrace{\leq}_{\text{by Proposition } \ref{soluzbif}} C(s)\frac{RR'}{\sqrt{\gamma}}\left\| |D|^{-\frac{1}{2}} h\right\|_{\sigma, s}\leq \frac{\tilde{C}(\sigma,s)}{\gamma}\|h\|_{\sigma, s+\frac{\tau-1}{2}}.
\end{align*}
This proves \eqref{stimar2}. 
\end{proof}

\begin{proof}[Proof of Proposition \ref{inversolinearizzato}]
In view of \eqref{Lparts2}, $U=U^{-1}$,  in particular we can write $U-\varepsilon\mathcal{R}_1-\varepsilon\mathcal{R}_2=U(\mathds{1}-\varepsilon U\mathcal{R}_1-\varepsilon U\mathcal{R}_2)$. Now, $U$ is invertible on $X_{\sigma,s+\frac{\tau-1}{2}}$ and satisfies \eqref{stimaU} and we can invert $\mathds{1}-\varepsilon U\mathcal{R}_1- \varepsilon U\mathcal{R}_2 $ in $X_{\sigma,s+\frac{\tau-1}{2}}$ for $\varepsilon<\frac{1}{2}\left(\frac{\gamma}{8\tilde{C}}\right)^2$ by Lemmata \ref{r1}, \ref{r2} using Neumann series. We deduce that $U-\varepsilon \mathcal{R}_1-\varepsilon\mathcal{R}_2$ is invertible and $\|(U-\varepsilon\mathcal{R}_1-\varepsilon\mathcal{R}_2)^{-1}h\|_{\sigma,s+\frac{\tau-1}{2}}\leq 8\|h\|_{\sigma,s+\frac{\tau-1}{2}}$, $\forall h\in W^{(n)}$. Recalling \eqref{estdiag} and letting
$K=8\cdot 9^2$, we conclude that, for any $ h \in W^{(n)} $,
$$
\|\mathfrak{L}_n(\varepsilon,w)^{-1}h\|_{\sigma,s}=\left\| |D|^{-\frac{1}{2}}(U-\varepsilon\mathcal{R}_1-\varepsilon\mathcal{R}_2)^{-1}|D|^{-\frac{1}{2}}h\right\|_{\sigma,s}\leq \frac{K}{\gamma}\|h\|_{\sigma,s+\tau-1} \, . 
$$
In particular $\|\mathfrak{L}_n(\varepsilon,w)^{-1}h\|_{\sigma,s}\leq \frac{K}{\gamma} L_n^{\tau-1}\|h\|_{\sigma,s} $ for any $ h \in W^{(n)}$.
\end{proof}

\footnotesize 

\end{document}